\documentclass[10pt, reqno]{amsart}
\pdfoutput=1 
\usepackage{amsmath, amssymb, amsthm, enumerate, bbm, soul}
\usepackage[colorlinks, linkcolor=blue, citecolor=magenta, hypertexnames=false,backref]{hyperref} 
\usepackage[dvipsnames]{xcolor}

\usepackage{graphicx}
\usepackage{booktabs}
\usepackage[font=small]{caption} 
\usepackage[flushleft]{threeparttable}
\usepackage{mathtools, enumitem}
\usepackage{url}

\def\spine{1.1in}
\usepackage[
heightrounded,
top=1in,
bottom=1.3in,
inner=\spine,
outer=\spine 
]{geometry} %

\usepackage[final, expansion=true, protrusion=true, spacing=true, kerning=true]{microtype}
\microtypecontext{spacing= nonfrench}

\def\R{\mathbb R}

\def\S{\mathbb S}

\def\supp{\operatorname{supp}}
\def\theta{\vartheta}

\def\argmin{\operatorname{argmin}}
\def\vol{\textnormal{vol}}
\def\M{\mathcal{M}}
\def\ve{\varepsilon}

\numberwithin{equation}{section}

\newtheorem{theorem}{Theorem}[section]
\newtheorem{lemma}[theorem]{Lemma}

\newtheorem{proposition}[theorem]{Proposition}
\newtheorem{corollary}[theorem]{Corollary}
\newtheorem{definition}[theorem]{Definition}
\theoremstyle{definition}

\newtheorem{remark}{Remark}
\newcommand{\ra}{\rightarrow}

	\title{Energy, Polarization, and Separation of Greedy Sequences for Riesz and Green Kernels}

\author{Dmitriy Bilyk}
\address{School of Mathematics, University of Minnesota, Minneapolis, MN 55455, USA}
\email{dbilyk@umn.edu}

\author{Liudmyla Kryvonos}
\address{Department of Mathematics \& Statistics, 
	University of North Florida, Jacksonville, FL 32224, USA}
\email{liudmyla.kryvonos@unf.edu}

\author{Ryan W. Matzke}
\address{Department of Mathematics, Applied Mathematics, and Statistics, 
	Case Western Reserve University, Cleveland, OH 44106, USA}
\email{rwm106@case.edu}

\author{Edward Saff}
\address{Center for Constructive Approximation, Department of Mathematics, 
	Vanderbilt University, Nashville, TN 37240, USA}
\email{edward.b.saff@vanderbilt.edu}

\begin{document} 
\maketitle

\begin{abstract}
	We investigate the asymptotic behavior of greedy $s$-Riesz and Green energy sequences $\{x_{n}\}_{n=1}^{\infty}$ on the unit sphere $\S^{d} \subset \R^{d+1}$, where each point $x_n$ is defined as the minimizer of the discrete potential generated by the preceding points $x_1, x_2, ..., x_{n-1}$. We show that the greedy sequence attains optimal growth behavior for the second-order term of the Green and Riesz $s$-energies when $d-2 \leq s < d$. The main idea is to establish the bounds on polarization using well-separation properties of the greedy configurations.      
	
\end{abstract}

\section{Introduction} \label{sec:introduction}
Denote by $\omega_{N,d} := \{x_1, x_2,...,x_N\}$ a set of $N$ points on the unit sphere $\S^d := \{x \in \R^{d+1} : \|x\|=1\}$, where $\|\cdot\|$ denotes the Euclidean norm. Unless noted otherwise, we assume $d \geq 2$. For a symmetric, lower semi-continuous kernel $K: \S^d \times \S^d \rightarrow (-\infty, \infty]$ and a point set $\omega_{N,d}$, the discrete energy is defined by
$$
E_K(\omega_{N,d}) := \sum_{i \ne j}^{N} K(x_i,x_j).
$$
For a fixed $N$, we define the minimal energy by
$$
\mathcal{E}_{K}(N) := \underset{\omega_{N,d} \subset \S^d}{\min} E_K(\omega_{N,d}).
$$
We call any point configuration that attains $\mathcal{E}_{K}(N)$ \textit{optimal} and denote it by $\omega_{N,d}^{*}=\{x_1^{*}, x_2^{*},..., x_{N}^{*}\}$.

Other quantities closely related to energy are \textit{polarization} and \textit{maximal (constrained) polarization}, defined, respectively, as 
\begin{equation} \label{polarization}
P_{K}(\omega_{N,d}) = \underset{x \in \S^d}{\min} \sum_{j=1}^{N} K(x,x_j), \quad \textup{and} \quad \mathcal{P}_K(N) = \max_{\omega_{N,d} \subset \mathbb{S}^d} P_K(\omega_{N,d}). 
\end{equation}
We are particularly interested in the case of the Riesz $s$-kernels, given by \footnote {Many works in the literature omit the $1/s$ factor in the Riesz kernel, but we include it in our new as well as known cited results.}
\begin{equation} \label{def:kernel}
K_{s}(x,y) := \begin{cases} 
	\frac{1}{s} \|x-y\|^{-s}, \;\; s \ne 0 \\
	- \log\|x-y\|, \;\;\; s=0,
\end{cases}
\end{equation}
with $\log$ denoting the natural logarithm.
Note that on the sphere, we may write the Riesz kernels as functions of inner products
\begin{equation}\label{eq:Riesz on Sphere as inner product}
K_s(x,y) = \begin{cases}
\frac{1}{s} (2 - 2  \langle y, x \rangle)^{- \frac{s}{2}} & s \neq 0 \\
- \frac{1}{2} \log( 2 - 2  \langle y, x \rangle) & s = 0
\end{cases}.
\end{equation}
For brevity, we replace $K_s$ by $s$ when denoting energy and polarization, e.g. we denote Riesz energy by $\mathcal{E}_{s}$.

Both energy (see e.g. \cite{BelMO, BetS, BraG, GraS, HarS}) and polarization (see e.g. \cite{BDHSS22, FarN, FarR, Oht, RezSVl, RezSVo, Sim}), especially in the case of  Riesz kernels, have been actively studied over the past few decades: we  refer the reader to \cite{BHS} for a comprehensive exposition.
The second-order asymptotic behavior for the optimal Riesz $s$-energy is well understood (see the discussion in Section \ref{sec:RieszEnergyofGreedySeq}) for $-2<s<d$; the case $s \leq - 2$ is also settled, as a minimizer in this range consists of two antipodal points \cite{Bjork}. However, results for second-order asymptotics of greedy Riesz $s$-energy have not been previously obtained (except when $d=1$). For the Green and $s$-Riesz greedy sequences, $d-2 \leq s < d$, we establish upper bounds on their polarization, and then use them to show the optimality of their second-order terms. Also, we show that for $d=2$, a greedy sequence attains optimal second-order asymptotic behavior (up to constants) for the logarithmic energy.

\subsection{Construction of a Greedy Sequence}

We will mainly work with an infinite sequence $\{x_n\}_{n=1}^{\infty}$ instead of a sequence of configurations. This is more difficult since points, once placed, remain fixed.  We start with an arbitrary initial point $x_1 \in \mathbb{S}^d$. The construction is then greedy: at each step we add a point that minimizes the $K$ energy with respect to the existing set of points.  Namely, having determined $\omega_{N,d}$, we pick
\begin{equation}\label{eq:greedy def}
 x_{N+1} = \argmin_{x \in \mathbb{S}^d} \sum_{i=1}^{N} K(x, x_i) = \argmin_{x\in \mathbb{S}^d} E_{K} \big( \omega_{N,d}  \cup \{x\}\big). 
 \end{equation}
If the minimum is not assumed at a unique point, then any such point may be chosen. 
 Notice that $x_{N+1}$ is just the point where the polarization is achieved for the set $\{x_1,\dots, x_N\}$, i.e.  $$  \sum_{j=1}^{N} K(x_{N+1}, x_j)  = P_{K} \big( \{x_1,\dots, x_N\} \big).  $$
Observe that the energy and polarization of greedy sequences satisfy the following relation.
\begin{equation} \label{eq:Gr Energy POlarization relation}
E_{K}(\omega_{N,d}) = 2 \sum_{ k = 1}^{N-1} P_{K}(\omega_{k,d}).
\end{equation}

When $K=K_0$,  i.e. $K_0(x,y) = - \log\|x-y\|$, {and $d = 1$ (or more generally, on any compact $\Omega \subset \mathbb{C}$)}, such sequences are called \textit{Leja sequences}  in recognition of the work of Leja \cite{Lej57}, although they were earlier introduced by Edrei \cite{Edr40}. Simultaneous with the former work, G\'{o}rski studied the case of $s = 1$ (the Newtonian kernel) for compact sets $\Omega \subset \mathbb{R}^3$ \cite{Gor57}. For $s = d-2$ (i.e. the Green kernel of $\mathbb{R}^d$) and compact $\Omega \subset \mathbb{R}^d$, the corresponding greedy sequences are known as Leja--G\'{o}rski sequences, which have been studied in \cite{Got01, Pri11}. Leja points have applications in Stochastic Analysis \cite{NarJ14} and Approximation/Interpolation Theory \cite{BiaCC12, CalVM11, CalVM12, Chk13, DeM04, JanWZ19, Rei90, TayT10}, and various numerical methods have been developed to approximate  such points (see, e.g. \cite{BagCR98, BosMSV10, CorD01}).

\subsection{Well-separation of Greedy Sequences}\label{subsec:Well-separated}

Let 
\begin{equation}\label{eq:def of separation distance}
    \delta(\omega_{N,d}) :=\{\min \|x_i - x_j\|: \: x_i, x_j \in \omega_{N,d},  \; j\ne i\}
\end{equation}
denote the \textit{separation distance} of the set $\omega_{N,d}$.

In his pioneering work \cite{Dah78}, Dahlberg proved that for a $C^{1, \alpha}$-smooth $d-$dimensional manifold $M \subset \R^{d+1}$ without boundary and $s=d-1$, there exists a constant $c>0$ such that
\begin{equation} \label{eq:Dalhberg separation bound}
\delta(\omega^{*}_{N,d}) \geq \frac{c}{N^{1/d}} ,\;\;\; \forall N \in \mathbb{N}.
\end{equation}
It is easy to see that the asymptotic order in (\ref{eq:Dalhberg separation bound}) cannot be improved. Based on this, we will refer to a sequence of point configurations $\{\omega_{N,m}\}_{N \in \mathbb{N}}$ on a $m-$dimensional submanifold of $\R^n$ as \textit{well-separated} if there exists a constant $c>0$ such that $\delta(\omega_{N,m}) \geq c N^{-1/m}$ for all $N \in \mathbb{N}$.

Rakhmanov, Saff, and Zhou established in \cite{RakSZ} well-separation properties of the minimal logarithmic energy points on $\S^{2}$. We remark that their proof can be easily applied without any changes to the greedy points as well. For $d-1 \leq s< d$, the separation properties for the optimal (minimal energy) configurations on $\S^{d}$ was proved by Kuijlaars, Saff, and Sun \cite{KuiSS}. The range of $s$ was later extended to $d-2 \leq s < d$ by Brauchart, Dragnev, and Saff \cite{BraDS14} with explicit values for the constant $c$. By following their techniques, we are able to extend the result to greedy sequences and obtain the following theorem, presented in Section \ref{sec:separation}.
\begin{theorem} \label{thm:sub_separation}
For $d \geq 2$, $d-2 \leq s<d$, sequences of greedy $s$-energy configurations on $\S^d$ are well-separated. 
\end{theorem}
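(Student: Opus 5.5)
The plan is to reduce the statement to a single inequality about the newly added point and then prove that inequality with a mean-value (superharmonicity) argument in the spirit of \cite{KuiSS, BraDS14}. The reduction goes as follows. Suppose we can show that there is $c>0$ with
\[
\min_{1\le j\le N}\|x_{N+1}-x_j\|\ \ge\ c\,N^{-1/d}\qquad\text{for all } N\ge1 .
\]
Then for any pair $i<j\le N$ we get $\|x_i-x_j\|\ge c(j-1)^{-1/d}\ge c N^{-1/d}$, so $\delta(\omega_{N,d})\ge cN^{-1/d}$. The case $j=1$ is harmless, since $x_2$ is antipodal to $x_1$. So fix $N$, write $y=x_{N+1}$, let $x_j$ be the point of $\omega_{N,d}$ nearest to $y$, and put $\rho=\|y-x_j\|$. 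We also set
\[
U_N(x)=\sum_{i=1}^N K_s(x,x_i),\qquad U_j=U_N-K_s(\cdot,x_j).
\]
The case $s=0$ (only possible for $d=2$) is the logarithmic case of \cite{RakSZ}, which, as remarked above, transfers verbatim. I would treat $s>0$ in the same way, with $K_s\ge 0$.

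\textbf{Step 1 (Laplacian of the kernel).} I would compute the Laplace--Beltrami operator $\Delta$ of $x\mapsto K_s(x,z)$ on $\S^d$, using the inner-product form \eqref{eq:Riesz on Sphere as inner product} and the formula $\Delta f(\langle x,z\rangle)=(1-t^2)f''(t)-d\,t f'(t)$. With $r=\|x-z\|$, the result should split as
\[
\Delta_x K_s(x,z)=-A_{s,d}\,(s+2-d)\,r^{-s-2}+B_{s,d}\,r^{-s}+\text{(lower order)},
\]
with $A_{s,d}>0$. For $s\ge d-2$ the most singular term is nonpositive, so
\[
\Delta_x K_s(x,z)\le C_1\,\|x-z\|^{-s}
\]
away from $z$. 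Since $s<d$, the right-hand side is locally integrable, and in the distributional sense the singularity at $z$ contributes a nonpositive point mass. This is exactly where the hypothesis $s\ge d-2$ enters.

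\textbf{Step 2 (approximate mean-value inequality).} Let $C=C(y,r)$ be the spherical cap of geodesic radius $r$ centered at $y$, and let $\mu_C$ be normalized surface measure on $C$. Green's identity on the cap gives
\[
\int f\,d\mu_C-f(y)=\int_C G_r(y,w)\,\Delta f(w)\,d\sigma(w),
\]
with a nonnegative kernel satisfying $\int_C G_r(y,w)\,d\sigma(w)\lesssim r^2$ and $G_r(y,w)\lesssim r^2\,\sigma(C)^{-1}$ plus the usual local singularity. Applying this to $f=U_j$ and using Step 1 termwise, I aim for
\[
\int U_j\,d\mu_C-U_j(y)\ \le\ C_2\,r^2\sum_{i\ne j}\sup_z \int_C \|w-z\|^{-s}\,d\mu_C(w)\ \lesssim\ N\,r^{2-s}.
\]
Alternatively, I would use a cruder bound of the form $\lesssim N r^{2}\cdot(\text{average of }\|w-x_i\|^{-s})$ if that is easier to justify.

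\textbf{Step 3 (greedy minimality).} Since $y$ minimizes $U_N$, we have $U_N(y)\le\int U_N\,d\mu_C$. Combining this with Step 2 gives
\[
K_s(y,x_j)\le \int K_s(\cdot,x_j)\,d\mu_C+\Big(\int U_j\,d\mu_C-U_j(y)\Big)\ \lesssim\ r^{-s}+N r^{2}\cdot(\text{error}).
\]
The first term is at most $C r^{-s}$ because $s<d$ makes $K_s$ integrable over caps. Now choose $r=N^{-1/d}$. The error term is $N r^2$ times a bounded average, which is at most $N^{1-2/d}\le N^{s/d}$ exactly when $s\ge d-2$. This yields $\rho^{-s}\lesssim N^{s/d}$, that is, $\rho\gtrsim N^{-1/d}$.

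The main obstacle is Step 2. We need to control the positive, curvature-induced part of $\Delta U_j$ uniformly in the positions of the other $N-1$ points, including points that lie inside the cap $C$ where the kernel is singular. The difficulty is keeping the error at order $N r^2$ (times at most $r^{-s}$-type averages) rather than something worse. My first attempt would follow the Kelvin-transform/stereographic-projection device of \cite{BraDS14}. If that fails, I would integrate over the cap radially in geodesic polar coordinates, explicitly separating the nonpositive singular part of $\Delta K_s$ from the bounded remainder.
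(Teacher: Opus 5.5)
Step~1 has the sign of the principal term reversed, and this breaks the argument for every $s\in(d-2,d)$. With $\Delta=\operatorname{div}\nabla$ (the operator your formula $(1-t^2)f''-dtf'$ gives) and $f(t)=\frac1s(2-2t)^{-s/2}$, one has $f'=(2-2t)^{-s/2-1}$ and $f''=(s+2)(2-2t)^{-s/2-2}$. Therefore
\[
\Delta_x K_s(x,z)=(s+2-d)\,\|x-z\|^{-s-2}+\tfrac{2d-s-2}{4}\,\|x-z\|^{-s}.
\]
For $s\ge d-2$ both terms are nonnegative, so $K_s(\cdot,z)$ is \emph{subharmonic} off $z$. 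This is exactly what the paper uses, in the convention $\Delta=-\operatorname{div}\nabla$, for the polarization bound of Theorem~\ref{thm:Riesz Polarization}. For example, when $d=2$, $s=1$ the Laplacian is $\|x-z\|^{-3}+\frac14\|x-z\|^{-1}>0$.

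Consequently, for $d-2<s<d$ your inequality $\Delta_xK_s\le C_1\|x-z\|^{-s}$ is false. The singular part is a positive, non-integrable $r^{-s-2}$ term, not a nonpositive point mass. Step~2 then points the wrong way: the sub-mean-value property gives $\int U_j\,d\mu_C\ge U_j(y)$, whereas you need the excess to be $O(N^{s/d})$. Nothing local forces that. Each $x_i$ at distance $\asymp r$ from $y$ contributes $\asymp r^{-s}$ to the excess. The only a priori information, $U_j(y)\le P_s(\omega_N)\le I_sN$, allows $\asymp Nr^{s}$ such points, which gives an excess of order $N$.

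Your argument needs two things at once: a nonpositive principal part of $\Delta K_s$ (requires $s\le d-2$) and $N^{1-2/d}\le N^{s/d}$ (requires $s\ge d-2$). These are compatible only at $s=d-2$. There the principal term vanishes and $\Delta K_{d-2}(\cdot,z)\le\frac d4\|x-z\|^{2-d}$ distributionally. In that case a Gronwall-type version of your bootstrap, using $U_j\ge0$ and $U_j(y)\le I_sN$, does give $\rho\gtrsim N^{-1/d}$. For $0<s<d-2$ the same mechanism gives only $N^{-1/(s+2)}$, in line with Theorem~\ref{thm:improved separation superharmonic}. So as written the proposal covers the endpoint $s=d-2$ (and $d=2$, $s=0$ via \cite{RakSZ}), but not the theorem.

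The missing idea is nonlocal. For $d-2\le s<d$, $K_s$ behaves intrinsically on the $d$-dimensional sphere like an $\alpha$-Riesz kernel with $\alpha=d-s\in(0,2]$. What is available is a maximum/domination principle, not a local mean-value inequality for the Laplace--Beltrami operator. The paper's proof (Theorem~\ref{thm:pol_separation}) uses this through \cite[Theorem 12]{BraDS14}, as follows.
\begin{enumerate}
\item Fix $i$ and set $E=\omega_N\setminus\{x_i\}$, $Q=K_s(\cdot,x_i)/(N-1)$. Then $h_E=U_N/(N-1)$, so $x^*$ globally minimizes $h_E$.
\item With $M=h_E(x^*)$, the theorem gives $U_s^{\mu_E}\ge M+U_s^{\mu_{Q,s}}-F_Q$ on $\S^d$.
\item Evaluating at $x^*$ yields $U_s^{\mu_{Q,s}}(x^*)+Q(x^*)\le F_Q$, i.e.\ $x^*\in S_Q^*$.
\item \cite[Propositions 14, 15]{BraDS14} show that $S_Q^*$ omits a cap of radius $\gtrsim N^{-1/d}$ around $x_i$.
\end{enumerate}
Your reduction to separating the newly added point from its predecessors is exactly the paper's and is fine. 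Your mention of the Kelvin-transform device of \cite{BraDS14} as a fallback is the right instinct. However, using it means replacing Steps~1--3 by this domination argument, not repairing Step~2.
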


Hardin, Reznikov, Saff, and Volberg in \cite{HarRSV19} proved well-separation of both optimal and greedy configurations for smooth $d$-dimensional manifolds without boundary in $\R^{d+1}$ and $s \in [d-1,d)$. In the same work, they establish the property for the hypersingular case, $s>d$, for the sets with positive $d$-dimensional Hausdorff measure, thereby extending the known result of Kuijlaars and Saff \cite{KuiS98} on the optimal configurations on the sphere $\S^{d}$.    
While it still remains open whether minimal logarithmic and $s$-Riesz energy configurations with $0<s< d-2$ are well-separated on $\S^{d}$, $d \geq 3$, Damelin and Maymeskul \cite{DamM05} obtained separation of order $N^{-1/(s+1)}$ for $0<s < d-1$. The following theorem extends their result to greedy sequences; see the proof provided in Section \ref{sec:separation}.

\begin{theorem} \label{th:separation}
For  $0<s < d-2$, any sequence of greedy $s$-energy configurations $\{\omega_{N,d}\}_{N \in \mathbb{N}}$ on $\S^{d}$ satisfies
$$
\delta(\omega_{N,d}) \geq c N^{-1/(s+1)}, \;\;\; \forall N \in \mathbb{N}, 
$$
with the constant $c >0$ independent of $N$.
\end{theorem}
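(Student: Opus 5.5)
Since each new point $x_{N+1}$ minimizes $U_N(x):=\sum_{i=1}^N K_s(x,x_i)$ over $\S^d$, it suffices to show that every point $x_{N+1}$ has distance at least $cN^{-1/(s+1)}$ from all earlier points $x_1,\dots,x_N$. Indeed, for $j<k\le N$ the pair distance $\|x_j-x_k\|$ is then bounded below by $c(k-1)^{-1/(s+1)}\ge cN^{-1/(s+1)}$, and the first two points are handled by adjusting the constant. We follow the potential-theoretic argument of Damelin and Maymeskul \cite{DamM05}. Its key feature is that it only uses a one-point minimality property, which the greedy point has by construction.

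\textbf{Step 1: an upper bound for $U_N(x_{N+1})$.} Since $x_{N+1}$ minimizes $U_N$, we have $U_N(x_{N+1})\le \int_{\S^d} U_N\,d\sigma = N W_s$. Here $\sigma$ is normalized surface measure and $W_s=\int K_s(x,y)\,d\sigma(y)<\infty$, because $s<d$. So the greedy point satisfies $U_N(x_{N+1})\le N W_s$.

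\textbf{Step 2: a lower bound for $U_N$ near $x_{N+1}$.} Suppose $x_j$ is a previous point with $\|x_{N+1}-x_j\|=\delta$ small. Consider the spherical cap $C=C(x_{N+1},r)$, and compare $U_N(x_{N+1})$ with the average of $U_N$ over a lower-dimensional set. Following Damelin--Maymeskul, the natural choice is the boundary sphere $\partial C\cong \S^{d-1}$ of radius $r$, or a suitable great-circle-type averaging.

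The point is that for $0<s<d-2$ the kernel $\|x-y\|^{-s}$ is strictly subharmonic in the ambient $\R^{d+1}$ away from $y$, with $\Delta |x|^{-s}=s(s+1-d)|x|^{-s-2}$. So a direct sub-mean-value inequality goes the wrong way. This is why only $N^{-1/(s+1)}$ is obtained: one averages along a one-dimensional curve, namely a circle of radius $r$ through nearby directions. In dimension one, $s+1$ plays the role of $d$, and the kernel averages stay controlled.

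Concretely, we compare $U_N(x_{N+1})$ with $U_N(y)$ for $y$ on a small circle $\gamma$ around $x_{N+1}$ of radius $r\approx 2\delta$. For each $i$, the average of $K_s(\cdot,x_i)$ over $\gamma$ is at most $K_s(x_{N+1},x_i)+O\big(r^2\,\mathrm{dist}(x_i)^{-s-2}\big)$ when $x_i$ is far from $\gamma$. It is at most $O(r^{-s})$ in any case; this last point needs $s<1$ or a careful line integral, so the cleanest route is to average over the full $d$-dimensional cap $C$ instead. Meanwhile the term with $x_j$ contributes at least $\tfrac1s\delta^{-s}$ at $x_{N+1}$ but only $O(r^{-s})$ on average.

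\textbf{Step 3: combine.} Minimality gives $U_N(x_{N+1})\le \mathrm{avg}_C U_N$. From Step 2,
$$\mathrm{avg}_C U_N\le U_N(x_{N+1})-c_1\delta^{-s}+C_2\,r\,N r^{-s}\cdot(\text{bounded}),$$
where the error term collects the contributions of the other points. We bound the error with a crude estimate of $\sum_i$ of gradients, $|\nabla U_N|\lesssim \sum_i \|x-x_i\|^{-s-1}$. This sum can itself be controlled via Step 1 in integrated form, giving roughly $Nr$ to leading order.

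Choosing $r\asymp\delta$ yields $c_1\delta^{-s}\lesssim N\delta$, hence $\delta\gtrsim N^{-1/(s+1)}$. That is the claimed exponent, and it matches the exponent mechanism in \cite{DamM05}.

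\textbf{Main obstacle.} The hardest step is the error term in Step 2: bounding the variation of $\sum_{i\ne j}K_s(\cdot,x_i)$ over the cap by $O(N\delta)$ without already knowing separation of the remaining points. Clustered points could make gradients huge. I would first try to replace pointwise gradient bounds by integrated ones: $\int_C \sum_i\|x-x_i\|^{-s-1}d\sigma(x)\lesssim N\,r^{d}\,r^{-s-1}\cdot r$-type estimates, which are valid since $s+1<d$. Then I would exploit convexity in $\delta$, as Damelin--Maymeskul do, so that only averages, never pointwise values, of the other points' potentials enter.
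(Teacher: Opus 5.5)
\textbf{The gap is the error term in Steps 2--3, which is exactly the ``main obstacle'' you leave open.} With $V=\sum_{i\ne j}K_s(\cdot,x_i)$ you need $\mathrm{avg}_C V - V(x_{N+1})\lesssim N\delta$. Nothing you have controls the local oscillation of $V$. Step 1 only bounds the global average $\int_{\mathbb{S}^d}U_N\,d\sigma=NI_s$. Any gradient bound, pointwise or integrated, is circular: points of $\omega_N$ clustering near $C$ make $\nabla V$ arbitrarily large, and you cannot exclude such clusters without the separation you are trying to prove.

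Even granting your integrated estimate $\int_C\sum_i\|x-x_i\|^{-s-1}\,d\sigma\lesssim Nr^{d-s}$, the resulting error is too large. Dividing by $\sigma(C)\asymp r^d$ and multiplying by $r$ gives an error $\asymp Nr^{1-s}$, consistent with your own intermediate ``$C_2\,r\,Nr^{-s}$''. With $r\asymp\delta$ this only yields $\delta\gtrsim N^{-1}$. The final ``roughly $Nr$'' is asserted rather than derived, and the one-dimensional averaging heuristic does not explain the exponent $s+1$.

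The root cause is a sign misreading. For $s<d-1$ you have $\Delta_{\mathbb{R}^{d+1}}\|x\|^{-s}=s(s+1-d)\|x\|^{-s-2}<0$, so the kernel is \emph{super}harmonic in $\mathbb{R}^{d+1}$. The mean-value inequality therefore goes the way you need, not the wrong way, and you discarded the right tool.

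\textbf{How the paper proceeds.} The paper's proof uses exactly this ambient superharmonicity and does no local averaging on the sphere. Write $\gamma_N=U_N(x^*)$.
\begin{enumerate}
\item The minimum principle gives $U_N\ge\gamma_N$ on the closed unit ball.
\item The reflection $\tilde x=(2-\|x\|)x/\|x\|$ satisfies $\|\tilde x-y\|\ge(1-\varepsilon)\|x-y\|$ for $y\in\mathbb{S}^d$ and $1<\|x\|\le1+\varepsilon$. Hence $U_N(x)\ge(1-\varepsilon)^s\gamma_N\ge\gamma_N-C\varepsilon N$, using your Step 1.
\item At $x=(1+\varepsilon)x^*$, every distance $\|x-x_i\|$ is at least $\|x^*-x_i\|$, and $\|x-x_{i_0}\|\ge\varepsilon$. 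So $U_N(x)\le\gamma_N-\frac1s\|x^*-x_{i_0}\|^{-s}+\frac1s\varepsilon^{-s}$.
\item Combining, $\|x^*-x_{i_0}\|^{-s}\lesssim\varepsilon^{-s}+\varepsilon N$, and the choice $\varepsilon=N^{-1/(s+1)}$ finishes the proof.
\end{enumerate}
The exponent $s+1$ comes from the first-order cost $O(\varepsilon N)$ of stepping off the sphere.

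\textbf{Repairing your route.} Your intrinsic cap-averaging can also be made to work, but it needs second-order information, not gradients.
\begin{itemize}
\item By the Laplace--Beltrami formula of Section 2, $\mathrm{div}\nabla_{\mathbb{S}^d}K_s(\cdot,y)$ is $\le 0$ near $y$ when $s<d-2$, carries no point mass at $y$, and is bounded above by a constant $M_{s,d}$ elsewhere.
\item Take $\phi$ zonal about $x_{N+1}$ with $\mathrm{div}\nabla\phi=1$ and $\phi=O(\rho^2)$ on a cap. Then $V-(N-1)M_{s,d}\phi$ is superharmonic there, and the linear term cancels by symmetry.
\item This gives $\mathrm{avg}_CV\le V(x_{N+1})+O(Nr^2)$ regardless of how the points cluster.
\item Strict superharmonicity near $x_j$, with $r=2\delta$, gives the gain $\mathrm{avg}_CK_s(\cdot,x_j)\le K_s(x_{N+1},x_j)-c\delta^{-s}$.
\end{itemize}
Together these close Step 3, and in fact give the stronger bound $\delta\gtrsim N^{-1/(s+2)}$.
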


In Theorem \ref{thm:improved separation superharmonic}, we establish an improved separation order of $N^{-1/(s+2)}$, achievable under certain bounds on the polarization.

\subsection{Optimal Second-order Polarization Estimates}  
For $s<d$, we note that the kernel $K_s(x,y)$ is integrable on $\mathbb S^d$ in each variable and depends only on the distance between $x$ and $y$. We denote the $s$-energy of $\S^{d}$ by
\begin{equation}\label{eq:Continuous Min Energy}
 I_{s} = {I}_{s,d} := \int_{\mathbb{S}^d} \int_{\mathbb{S}^d} K_s (x,y) d \sigma(x) d\sigma(y) = \int_{\mathbb{S}^d} K_s(x,y) d\sigma(y) , \quad \forall~x \in \mathbb{S}^{d}, \quad
\end{equation}
where $\sigma$ denotes the normalized uniform measure on $\mathbb{S}^d$. Throughout the paper, we use the notation $I_s$ interchangeably with $I_{s,d}$ whenever the dimension $d$ is clear from the context. The constant $I_s$ is also called the Wiener constant for $s$-energy on $\S^{d}$.

One would expect that polarization is maximized when the points are distributed as uniformly  as possible. In that case, one could replace summation by integration in \eqref{polarization} and expect the maximal Riesz polarization $\mathcal{P}_{s}(N)$ to behave roughly like $I_{s}  N$, and this is indeed correct, see e.g. \cite[Theorem 14.6.3]{BHS} together with  \cite[Corollary 14.6.7]{BHS}. For the well-separated configurations, we prove more delicate upper bounds on polarization.

\begin{theorem}\label{thm:Riesz Polarization}
Suppose a sequence of configurations $\{\omega_{N,d}\} \subset \S^{d}$ is well-separated. Then there exist positive constants $c_{s,d}$, depending on the separation constant, such that the following hold for all $N \in \mathbb{N}$. If $d \geq 2$ and $d-2 \leq s < d$, then
\begin{equation}\label{eq:opt_polarization}
  P_{s}(\omega_{N,d})   \leq I_{s,d} N - c_{s,d} N^{\frac{s}{d}}
\end{equation}
except when $d=2$ and $s=0$, in which case
\begin{equation}\label{eq:opt_polarization2}
   P_{0}(\omega_{N,2})   \leq  I_{0,2} N  + c_{0,2} \Big(-\frac{1}{2} + D(N) \Big) \log N,
\end{equation}
where $D(N)$ is defined by 
\begin{equation} \label{def: of Dn}
E_{0}(\omega_{N,2}) = I_{0,2}(\sigma) N^2 - D(N)N\log N.
\end{equation}
In particular, (\ref{eq:opt_polarization}) and (\ref{eq:opt_polarization2}) hold for greedy $s$-energy sequences when $s$ is in the prescribed range. Although the behavior of $D(N)$ for arbitrary well-separated configurations is unknown, in Theorem \ref{thm:greedyenergy} we show that for greedy sequences $C_1 \leq D(N) \leq \frac{1}{2} + \frac{C_2}{\log N}$ for some positive constants $C_1, C_2$, so that (\ref{eq:opt_polarization2}) becomes

\begin{equation}
   P_{0}(\omega_{N,2})   \leq  I_{0,2} N  + c_{0,2}.
\end{equation}
\end{theorem}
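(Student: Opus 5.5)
Since polarization is a minimum over $x\in\S^d$, it suffices to find one point with small potential, and I will look for it by averaging. Write $U(x)=\sum_{j=1}^N K_s(x,x_j)$. Then $\int_{\S^d}U\,d\sigma = I_s N$, and we need $\min U \le I_s N - cN^{s/d}$.

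The key device is to average $U$ against a probability measure $\mu$ that avoids small neighborhoods of the points. Set $r = \eta N^{-1/d}$, with $\eta$ small compared with the separation constant, so that the caps $C(x_j,r)$ are pairwise disjoint. Let $\mu$ be $\sigma$ restricted to $\S^d\setminus\bigcup_j C(x_j,r)$, renormalized. Then
\[
\min U \le \int U\,d\mu = \frac{1}{1-N\sigma(C_r)}\Big(I_sN - \int_{\bigcup C(x_j,r)} U\,d\sigma\Big).
\]
Decompose $\int_{\bigcup_i C(x_i,r)}U\,d\sigma = \sum_i \int_{C(x_i,r)} K_s(x,x_i)\,d\sigma(x) + \sum_i\sum_{j\neq i}\int_{C(x_i,r)}K_s(x,x_j)\,d\sigma(x)$.

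The self term equals $N\int_{C(x_i,r)}\|x-x_i\|^{-s}/s\,d\sigma \approx c\,N r^{d-s} = c'\eta^{d-s}N^{s/d}$, since $\sigma(C_r)\asymp r^d$.

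The cross terms are where the restriction $s\ge d-2$ enters. For $d-2\le s<d$ the function $x\mapsto K_s(x,y)$ is subharmonic on $\S^d\setminus\{y\}$ in the appropriate sense. Concretely, $\Delta_{\S^d}K_s(\cdot,y)\ge -C$, where the constant comes from $I_s$ being harmonic-constant. Hence the cap average of $K_s(\cdot,x_j)$ over $C(x_i,r)$ is at least $K_s(x_i,x_j) - O(r^2)$. This sub-mean-value inequality up to curvature corrections on the sphere is the step I expect to be the main obstacle. I would handle it by writing $K_s$ as a function of $t=\langle x,y\rangle$ and checking the sign of the Laplace--Beltrami operator, $(1-t^2)f''-dtf'$. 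Summing the cross terms then gives
\[
\sum_{i\neq j}\int_{C(x_i,r)}K_s(x,x_j)\,d\sigma \;\ge\; \sigma(C_r)\,\big(E_s(\omega_N) - O(N^2r^2)\big).
\]

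Now use the known lower bound $E_s(\omega_N)\ge I_sN^2 - CN^{1+s/d}$, valid for any configuration. Combining this with $\sigma(C_r)\asymp \eta^d/N$ and the expansion $1/(1-N\sigma(C_r)) = 1+N\sigma(C_r)+O(\eta^{2d})$, the $I_sN$-terms cancel to leading order. What remains is
\[
\int U\,d\mu \le I_sN - c'\eta^{d-s}N^{s/d} + O\big(\eta^d N^{s/d}\big) + O(\eta^{2}N^{1-2/d}\eta^d).
\]
The last term is at most of order $N^{s/d}$ precisely because $s\ge d-2$. Choosing $\eta$ small makes $\eta^{d-s}$ dominate $\eta^d$ and $\eta^{d+2}$, which yields \eqref{eq:opt_polarization}.

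For $d=2$, $s=0$ the self term is $N r^2\log(1/r)\asymp \eta^2\log N$, and the energy bound enters through $D(N)$ instead of $N^{1+s/d}$. Tracking the same computation with $E_0 = I_0N^2 - D(N)N\log N$ produces the combination $(-\tfrac12+D(N))\log N$ with a constant factor, as in \eqref{eq:opt_polarization2}.

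The claim for greedy sequences then follows from Theorem \ref{thm:sub_separation}. Finally, substituting $D(N)\le \tfrac12 + C_2/\log N$ from Theorem \ref{thm:greedyenergy} reduces \eqref{eq:opt_polarization2} to a bounded constant.
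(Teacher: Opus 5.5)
Your proposal is correct and follows essentially the paper's proof: you average against normalized $\sigma$ on the complement of disjoint caps of radius $\asymp N^{-1/d}$, bound the cross terms by the sub-mean-value inequality, insert the minimal-energy lower bound $E_s\ge I_sN^2-C'N^{1+s/d}$ (or the definition of $D(N)$ when $d=2$, $s=0$), and shrink the cap constant so that the self-term $\asymp\eta^{d-s}N^{s/d}$ wins. Two simplifications are worth making: your own sign check of $(1-t^2)f''-dtf'$ gives $\Delta_{\S^d}K_s(\cdot,y)\ge 0$ exactly for $s\ge d-2$ (the paper invokes Gardiner's mean-value lemma), so no $O(r^2)$ correction arises; and the $I_sN$ terms cancel exactly, $\bigl(I_sN-\sigma(C_r)I_sN^2\bigr)/\bigl(1-N\sigma(C_r)\bigr)=I_sN$, which you should use in place of a truncated geometric expansion, since truncating would leave an $O(\eta^{2d}N)$ residue much larger than $N^{s/d}$.
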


The upper bounds (\ref{eq:opt_polarization}), (\ref{eq:opt_polarization2}) in the Theorem \ref{thm:Riesz Polarization} are proved in Sections \ref{thm:sec:ProofPolarizRiesz} and \ref{sec:ProofPolarizLog}, respectively.


\subsection{Energy Asymptotics of Greedy Sequences}  \label{sec:RieszEnergyofGreedySeq}
Our other main result establishes the second-order asymptotic behavior of greedy sequences on $\S^d$ for $d\geq 2$. We show that the second-order term for greedy sequences has the optimal order of magnitude, matching that of the optimal configurations.

\begin{theorem}
\label{thm:greedyenergy}
Let $d \geq 2$, and denote by $\omega_{N,d} = \{x_1,...,x_N\}$ the first $N$ elements of a greedy $s$-energy sequence on $\S^d$ as defined in (\ref{eq:greedy def}) for $K=K_s$. Then there exist positive constants $c_{s,d}, C_{s,d}$  such that the following holds for $N \in \mathbb{N}\setminus\{1\}$.\\
(i) \noindent For $d-2 \leq s < d$, $s > 0$,
\begin{equation}\label{eq:greedyenergy}
 - c_{s,d} N^{1 +\frac{s}{d}} \leq E_{s}(\omega_{N,d})  - I_{s,d} N^2  \leq  - C_{s,d} N^{1+\frac{s}{d}}.
\end{equation}
(ii) For $s = 0$, $d=2$,
\begin{equation}\label{eq:greedyenergy2}
- \frac{1}{2} N \log N -  c_{0,2} N \leq  E_{0}(\omega_{N,2})  - I_{0,2} N^2  \leq - C_{0,2} N \log N.
\end{equation}
\end{theorem}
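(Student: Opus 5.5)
The plan is to combine the energy–polarization identity \eqref{eq:Gr Energy POlarization relation}, $E_s(\omega_{N,d}) = 2\sum_{k=1}^{N-1} P_s(\omega_{k,d})$, with two-sided bounds on the polarization of the greedy prefixes. The upper bound in \eqref{eq:greedyenergy} will come from Theorem~\ref{thm:Riesz Polarization}, which applies because greedy sequences are well-separated by Theorem~\ref{thm:sub_separation}. That theorem gives $P_s(\omega_{k,d}) \le I_{s,d} k - c_{s,d} k^{s/d}$. Summing over $k=1,\dots,N-1$ yields
\begin{equation*}
E_s(\omega_{N,d}) \le 2 I_{s,d}\tfrac{(N-1)N}{2} - 2c_{s,d}\sum_{k=1}^{N-1} k^{s/d} = I_{s,d}N^2 - I_{s,d}N - c' N^{1+s/d}.
\end{equation*}
Since $s>0$ gives $I_{s,d}>0$, the term $-I_{s,d}N$ can simply be dropped, which gives the upper bound with some $C_{s,d}>0$.

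For the lower bound I will not use the greedy structure at all. It suffices to invoke the known lower bound for the \emph{optimal} energy: for $0<s<d$ (and in particular for $d-2\le s<d$), $\mathcal E_s(N) \ge I_{s,d}N^2 - c N^{1+s/d}$. This is the classical Wagner / Rakhmanov--Saff--Zhou / Brauchart-type estimate cited in the literature (e.g.\ \cite{BHS}). Since $E_s(\omega_{N,d}) \ge \mathcal E_s(N)$, the left inequality of \eqref{eq:greedyenergy} follows immediately.

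For $d=2$, $s=0$, the lower bound is similarly the known optimal logarithmic estimate $\mathcal E_0(N) \ge I_{0,2}N^2 - \tfrac12 N\log N - cN$, which I would cite from \cite{BHS} or Wagner/RSZ. This gives the left side of \eqref{eq:greedyenergy2}, and by \eqref{def: of Dn} also $D(N) \le \tfrac12 + C_2/\log N$.

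The upper bound for $s=0$ is the main obstacle, because Theorem~\ref{thm:Riesz Polarization} only gives $P_0(\omega_{k,2}) \le I_{0,2}k + c(-\tfrac12 + D(k))\log k$, which is circular in $D$. With the upper bound on $D(k)$ just obtained, this already yields $P_0(\omega_{k,2}) \le I_{0,2}k + c$. That is not enough: it gives only $E_0 \le I_{0,2}N^2 + O(N)$, and we need a negative $N\log N$ term. To get it, I would go back to the proof of \eqref{eq:opt_polarization2}. I expect the polarization estimate to be derived by averaging the potential $\sum_j K_0(x,x_j)$ over a suitable probability measure $\mu$ built from small caps of radius $\asymp k^{-1/2}$ around points, or, more naturally, by evaluating it near $x_{k+1}$. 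The idea is to show directly, via well-separation, that the discrete potential lies below the continuous one by $\gtrsim \log k$ on a set that the greedy choice can exploit. Concretely, I would bound $P_0(\omega_k) \le \int U(x)\,d\mu(x)$ with $\mu$ the normalized measure on the union of caps $C(x_j, r_k)$, $r_k = \epsilon k^{-1/2}$. The self-term contributes $-\tfrac12\log(r_k^2)$-type savings relative to $I_{0,2}$, namely $\log\frac{1}{r_k}$ versus the $k\cdot$(cap mass) comparison. This produces $P_0(\omega_k) \le I_{0,2}k - c\log k$, provided the interaction terms (controlled by separation and superharmonicity of $\log$ in dimension $2$) cost only $O(1)$.

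If that direct route fails, the fallback is a bootstrap. Sum the circular bound to get $E_0(\omega_N) \le I_{0,2}N^2 + 2c\sum_{k<N}(D(k)-\tfrac12)\log k - I_{0,2}N$, i.e.\ $D(N)N\log N \ge 2c\sum_{k<N}(\tfrac12 - D(k))\log k + I_{0,2}N$. Then argue by contradiction: if $D(N)$ were small along a subsequence, the right-hand side would force it to be at least a fixed positive constant $C_1$. This would give $E_0 - I_{0,2}N^2 \le -C_{0,2}N\log N$.
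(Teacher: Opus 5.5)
Part (i) and both lower bounds are fine and follow the paper. The gap is in the upper bound of (ii): your primary route fails, and your fallback, as stated, does not close.

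\textbf{The direct route cannot work.} Averaging $U=\sum_j K_0(\cdot,x_j)$ over the union of the caps goes the wrong way. There the self-term averages to about $\log(1/r_k)\approx\frac12\log k$, and subharmonicity only bounds cap averages from \emph{below}. The workable measure is $\sigma$ on the \emph{complement} of the caps, as in the paper.

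With that measure the interaction contribution is $-\frac{c^2}{(16-c^2)k}E_0(\omega_{k,2})$, where $c$ is the separation constant. This equals $-\frac{c^2}{16-c^2}I_{0,2}k+\frac{c^2}{16-c^2}D(k)\log k$. The first piece only renormalizes the leading term to $I_{0,2}k$; the second is not $O(1)$. Added to the self-term saving $-\frac{c^2}{2(16-c^2)}\log k$, it reproduces exactly the circular bound of Theorem~\ref{thm:Riesz Polarization}.

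So separation plus subharmonicity cannot give $P_0(\omega_{k,2})\le I_{0,2}k-c\log k$: the estimate degenerates to $I_{0,2}k+O(1)$ whenever $D(k)$ is near $\frac12$. The greedy structure has to enter through the energy recursion.

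\textbf{The fallback is the paper's idea, but the contradiction step is missing.} It does not follow from your summed inequality. If $D(k)\approx\frac12$ for all $k<N$, the right-hand side $2c_{0,2}\sum_{k<N}(\frac12-D(k))\log k+I_{0,2}N$ is only $O(N)$. It can even be negative, since $I_{0,2}=\frac12-\log 2<0$. So it does not exclude an isolated $N$ with $D(N)$ small. Summing discards exactly the information you need, namely that $D$ cannot drop abruptly.

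Work with the unsummed recursion instead. Put $F(N)=D(N)N\log N=I_{0,2}N^2-E_0(\omega_{N,2})$. The identity $E_0(\omega_{N+1,2})=E_0(\omega_{N,2})+2P_0(\omega_{N,2})$ and the circular bound give
\[
F(N+1)\ \ge\ F(N)+2c_{0,2}\Big(\tfrac12-D(N)\Big)\log N-O(1).
\]
Divide by $(N+1)\log(N+1)=N\log N+\log N+O(1)$ and use $D(N)\le\frac12+C_2/\log N$. For all large $N$ this yields two facts:
\begin{enumerate}
\item $D(N+1)\ge D(N)-O(1/N)$;
\item $D(N+1)-D(N)\gtrsim \epsilon/N$ whenever $D(N)\le \frac{c_{0,2}}{1+2c_{0,2}}-\epsilon$.
\end{enumerate}
Since $\sum 1/N=\infty$, $D$ eventually reaches $\frac{c_{0,2}}{1+2c_{0,2}}-\epsilon$. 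Afterwards it never falls more than $O(1/N)$ below that level, which bounds $D(N)$ away from $0$ for large $N$. This is precisely the paper's argument: its threshold $\frac{c^2}{16+c^2}$ is $\frac{c_{0,2}}{1+2c_{0,2}}$ with $c_{0,2}=\frac{c^2}{16-c^2}$.
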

The case of $s \leq -2$ was entirely handled in \cite[Theorems 4.1 and 5.2]{LopM21} and the case of $s \geq d$ has been studied in  \cite{LopS10, LopW15}.
In the case of the circle $\mathbb S^1$, i.e. $d=1$, more precise information about the Riesz energy of greedy sequences is known, see Theorem \ref{thm:Energy of Greedy points Circle}.

We prove the upper bounds (i) and (ii) in Theorem \ref{thm:greedyenergy} in Sections \ref{sec:ProofofGreedyEnergyRiesz} and \ref{sec:ProofGreedyEnergyLog}, respectively. The lower bounds are well-known optimal estimates for minimal Riesz $s$-energies in the range $-2<s<d$ that have been computed by various authors \cite{Bra06, BraHS12, KuiS98, RakSZ94, Wag90, Wag92}, see also Theorems 6.4.5, 6.4.6, and 6.4.7 in \cite{BHS}. These bounds are as follows. 

\begin{theorem}\label{thm:Riesz Energy Asymptotics}
For $d \geq 1$, $-2 < s < d$, $s \neq 0$, there exist  positive constants $C_{s,d}$, $C'_{s,d}$ such that for $N \geq 2$,
\begin{align*} \label{thm:BoundsonOptimalRieszEnergy}
I_{s,d} N^2 - C'_{s,d} N^{1+\frac{s}{d}} & \le \mathcal{E}_{s}(N) \le 
I_{s,d} N^2 - C_{s,d} N^{1+\frac{s}{d}}, \quad  s>0,\\
I_{s,d} N^2 + C'_{s,d} N^{1+\frac{s}{d}} & \le \mathcal{E}_{s}(N) \le 
I_{s,d} N^2 +  C_{s,d} N^{1+\frac{s}{d}}, \quad  s<0. 
\end{align*} 
If $s = 0$, then
\begin{equation} \label{thm:BoundsonOptimalLogEnergy}
\mathcal{E}_{0}(N) = I_{0,d} N^2 - \frac{1}{d}N \log N + \mathcal{O}(N).
\end{equation}
\end{theorem}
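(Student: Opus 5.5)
We prove the two sides separately; they rest on two standard ingredients. The upper bound uses a random or equal-area construction. The lower bound uses a positive-definiteness/smoothing argument.

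\emph{Upper bound.} Partition $\S^d$ into $N$ regions $R_1,\dots,R_N$ of equal measure $1/N$ and diameter at most $C N^{-1/d}$; such equal-area partitions are standard. Choose $x_i$ independently, with $x_i$ distributed according to $N\sigma|_{R_i}$. The expected energy is
\[
\sum_{i\neq j} N^2\int_{R_i}\int_{R_j} K_s\,d\sigma\,d\sigma
= I_{s,d}N^2 - N^2\sum_{i}\int_{R_i}\int_{R_i}K_s\,d\sigma\,d\sigma .
\]
The subtracted diagonal terms have a definite sign, and we bound them using the diameter bound. For $s>0$, $K_s\ge \frac1s (CN^{-1/d})^{-s}$ on $R_i\times R_i$, so each diagonal term is at least $c N^{-2}N^{s/d}$. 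Summing over $i$ gives the saving $cN^{1+s/d}$. For $s<0$, $K_s$ is negative and $|K_s|\le C N^{s/d}$ on $R_i\times R_i$. The correction is then positive and at most $CN^{1+s/d}$, which is the upper bound in that case. For $s=0$ the diagonal contributes $N^2\cdot N\cdot N^{-2}(\tfrac1d\log N + O(1)) = \tfrac1d N\log N+O(N)$. Computing this precisely requires the log-average over a region of diameter $\asymp N^{-1/d}$. The result is $-\tfrac1d N\log N+O(N)$, provided the regions are also "fat", i.e.\ contain balls of radius $cN^{-1/d}$. Since some configuration does at least as well as the expectation, the minimum $\mathcal E_s(N)$ satisfies the same bound.

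\emph{Lower bound.} For $-2<s<d$ the kernel $K_s$ (with an additive constant when $s\le 0$) is conditionally positive definite on $\S^d$. Its Gegenbauer coefficients $\hat K_s(n)$ are positive for $n\ge1$, with $\hat K_s(n)\asymp n^{s-d}$. Let $\phi_h$ be a zonal mollifier supported in a cap of radius $h=N^{-1/d}$. Let $\mu_N=\frac1N\sum\delta_{x_i}$ and form the smoothed measure $\mu_N*\phi_h$. Positive definiteness gives
\[
N^2\iint K_s\,d(\mu_N*\phi_h)\,d(\mu_N*\phi_h)\ \ge\ I_{s,d}N^2 .
\]
The left side equals $\sum_{i,j}(K_s*\phi_h*\phi_h)(x_i,x_j)$. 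The smoothed kernel $K^h_s:=K_s*\phi_h*\phi_h$ satisfies $K^h_s\le K_s+CN^{s/d}$ off the diagonal, for $s>0$, by superharmonicity/monotonicity for $s\ge d-2$ or a direct estimate in general. It also satisfies $K^h_s(x,x)\le CN^{s/d}$. Therefore
\[
E_s(\omega_N)\ge I_{s,d}N^2 - N\,K_s^h(x,x) - (\text{error})\ge I_{s,d}N^2-C'N^{1+s/d}.
\]
It is cleaner to do this in Fourier form. Expand $E_s=\sum_n \hat K_s(n)\sum_{i,j}P_n(\langle x_i,x_j\rangle)-N\cdot(\text{diagonal})$, drop nonnegative terms for $n\ge 1$ after comparing with a minorant kernel whose diagonal is $\asymp N^{s/d}$. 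For $s<0$ the analogous argument yields the lower bound $+C'N^{1+s/d}$. It needs the $n\ge1$ terms to be nonnegative, which holds since $-K_s$ is conditionally negative definite there. For $s=0$, comparing $-\log$ with the minorant $-\tfrac12\log(\|x-y\|^2+h^2)$ yields the diagonal $\tfrac1d N\log N$.

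\emph{Main obstacle.} The delicate point is the precise constant $\tfrac1d$ in the logarithmic case, on both sides. The powers for $s\ne0$ come with unspecified constants and are routine, whereas the log case requires exact matching. I would handle it with the minorant $-\tfrac12\log(\|x-y\|^2+h^2)$, verifying its positive definiteness via its Gegenbauer expansion. Taking $h=N^{-1/d}$, its diagonal contribution is $\tfrac1d N\log N$ exactly, with an $O(N)$ error.
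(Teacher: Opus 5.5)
Your upper bound is correct. It is the standard equal-area-partition argument of Rakhmanov--Saff--Zhou and Kuijlaars--Saff. The paper does not reprove this theorem; it quotes it from \cite{Bra06, BraHS12, KuiS98, RakSZ94, Wag90, Wag92}, and it uses the same partition argument in Appendix B for the Green energy. For $s=0$ you do not need fat regions: $-\log\|x-y\|\ge \frac1d\log N-\log C$ on $R_i\times R_i$ already gives the saving $\frac1d N\log N-O(N)$.

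The lower bound for $s\ge 0$ has a genuine gap. With a conditionally positive definite $L\le K_s$ you get $E_s(\omega_N)\ge \hat L(0)N^2-NL(1)$. Two losses must then be of the target size: the diagonal term $NL(1)$, and the zeroth-coefficient deficit $N^2\int (K_s-L)\,d\sigma$. You only control the first.
\begin{itemize}
\item Your smoothing bound $K_s^h\le K_s+CN^{s/d}$, summed over $N(N-1)$ pairs, costs $N^{2+s/d}$. Its justification also has the sign backwards. For $s\ge d-2$ the kernel is \emph{sub}harmonic on $\S^d\setminus\{y\}$, which is what the paper exploits for \emph{upper} bounds on polarization. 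So cap averaging raises every distant pair by $\asymp h^2\|x_i-x_j\|^{-s-2}$, and you never sum these errors.
\item Your explicit minorant $L=-\frac12\log(\|x-y\|^2+h^2)$ fails the deficit requirement. Here $K_0-L=\frac12\log(1+h^2/\|x-y\|^2)\approx h^2/(2\|x-y\|^2)$ at \emph{all} distances. For $d\ge 3$ this gives $N^2\int(K_0-L)\,d\sigma\asymp N^2h^2=N^{2-2/d}$, which swamps $N\log N$; even optimizing $h$ you get at best $-\frac12 N\log N$, not $-\frac1d N\log N$.
\item For $d=2$ one computes $\int (K_0-L)\,d\sigma=\frac{h^2}{8}\bigl(\log(4/h^2)+1\bigr)+O(h^4)$. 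At $h=N^{-1/2}$ this adds $\frac18 N\log N$, so you obtain $-\frac58 N\log N$. Optimizing $h$ still leaves an error of order $N\log\log N$.
\item Likewise $\frac1s(\|x-y\|^2+h^2)^{-s/2}$ has deficit $\asymp h^{d-s}+h^2$, with $h^2\log(1/h)$ at $s=d-2$. It therefore yields the sharp bound only for $d-2<s<d$ and fails for $0<s\le d-2$.
\end{itemize}

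For $-2<s<0$ (and $d\ge 2$) the plan also falls short. Since $K_s(x,x)=0$, dropping the nonnegative $n\ge 1$ terms yields only $E_s\ge I_sN^2$. The regularized minorant gives even less, because its deficit $\asymp N^{2-2/d}$ exceeds the diagonal gain $\frac1{|s|}N^{1+s/d}$. The positive term $C'N^{1+s/d}$ needs a quantitative lower bound on $\sum_{n\ge1}\hat K_s(n)Z_n$, where $Z_n:=\sum_{i,j}P_n(\langle x_i,x_j\rangle)\ge 0$. Beck's discrepancy argument supplies one:
\begin{itemize}
\item Take a nonnegative zonal polynomial $F=\sum_{n\le M}f_nP_n$ with $0\le f_n\le f_0$ and $F(1)\gtrsim M^d f_0$, e.g.\ a squared reproducing kernel.
\item With $M=AN^{1/d}$ and $A$ large, $F(1)\ge 2f_0N$.
\item Use $\hat K_s(n)\gtrsim M^{s-d}$ for $1\le n\le M$ and $\sum_{i,j}F(\langle x_i,x_j\rangle)\ge NF(1)$.
\item This gives $\sum_{n\ge1}\hat K_s(n)Z_n\gtrsim \frac{M^{s-d}}{f_0}\bigl(NF(1)-f_0N^2\bigr)\gtrsim N^{1+s/d}$.
\end{itemize}
Your plan contains no argument of this kind. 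Nor does it construct a minorant with $\int(K_s-L)\,d\sigma=O(N^{-1+s/d})$ and $L(1)\lesssim N^{s/d}$. So the lower bounds for $-2<s<0$ and for $0<s\le d-2$, and the constant $\frac1d$ for $s=0$, remain unproved.
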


\subsection{Green Energy}
Recall that Green's function $G(x,y) = G(\mathcal{M}; x,y)$ for the Laplacian on a compact Riemannian manifold $(\mathcal{M}, g)$ of dimension $d \geq 2$ is symmetric, smooth off the diagonal, and satisfies
 $$
 \bigtriangleup G(x, \cdot) = \delta_x - V^{-1} \textnormal{vol}
 $$
in the sense of distributions, where $\bigtriangleup = - \operatorname{div} \nabla$ is the Laplace-Beltrami operator on $\mathcal{M}$, $\textnormal{vol}$ is the Riemannian volume form or density, and $V = \textnormal{vol}(\mathcal{M})$ is the volume of $\mathcal{M}$. See \cite[Theorem 4.13]{Aub98} for a proof of the existence of such a function, which is discussed further in \cite{BelCC19, Del19}. 

Given a configuration $\omega_N =\{x_1, x_2,...,x_N\} \subset \mathcal{M}$, its Green energy is 
$$
E_{G}(\omega_N) = \sum_{i \ne j} G(\mathcal{M}; x_i,x_j),
$$
and has been studied, for example, by Elkies \cite[VI, Theorem 5.1]{Lang}, Beltr\'an, Corral, and Criado del Rey \cite{BelCC19}.

Our primary focus is on comparing the second-order asymptotic behavior of greedy Green energy configurations with that of optimal ones, for which the following is known. 
\begin{theorem}
    On the sphere $\mathbb{S}^d$, for $d \geq 3$, for $N$ sufficiently large, there exist positive constants $C_1, C_2$ such that
    \begin{equation}\label{eq: Green energy second order asymptotics}
        - C_1 N^{1 + \frac{d-2}{d}} \leq \mathcal{E}_G(N) \leq - C_2 N^{1 + \frac{d-2}{d}}. 
    \end{equation}
\end{theorem}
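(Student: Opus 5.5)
The plan is to prove the two inequalities in \eqref{eq: Green energy second order asymptotics} separately. For both we use only two standard facts about the Green function of $\S^d$, $d\ge 3$. First, $G(x,y)=g(\langle x,y\rangle)$ is zonal, with $\int_{\S^d} G(x,y)\,d\sigma(y)=0$ for every $x$. Second, near the diagonal $G(x,y)=a_d\|x-y\|^{2-d}+O(\|x-y\|^{4-d})$, with $a_d>0$ (replace $O(\|x-y\|^{4-d})$ by $O(|\log\|x-y\||)$ when $d=4$). We also use that its Gegenbauer (spherical harmonic) coefficients are $\lambda_n^{-1}=(n(n+d-1))^{-1}>0$ for $n\ge1$ and vanish for $n=0$. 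In particular $G$ is positive definite on signed measures, and $E_G$ is the energy of a kernel with zero Wiener constant. So the claim is the Green analogue of Theorem \ref{thm:Riesz Energy Asymptotics} with $s=d-2$ and $I=0$.

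\emph{Upper bound (construction).} Take an equal-area partition $\S^d=\bigcup_{i=1}^N A_i$ with $\sigma(A_i)=1/N$ and $\operatorname{diam}A_i\le C N^{-1/d}$. Such partitions exist, e.g.\ Leopardi's or Feige--Schechtman type. Choose $x_i$ independently with law $N\sigma|_{A_i}$ (jittered sampling). Independence gives
\[
\mathbb E\,E_G=\sum_{i\neq j}N^2\int_{A_i}\int_{A_j}G\,d\sigma\,d\sigma
= N^2\int\!\!\int G\,d\sigma\,d\sigma-N^2\sum_{i}\int_{A_i}\int_{A_i}G\,d\sigma\,d\sigma .
\]
The first term is $0$. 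For the second, $\|x-y\|\le CN^{-1/d}$ on $A_i\times A_i$, so $G(x,y)\ge c\,N^{(d-2)/d}$ there for $N$ large, since the singular term dominates the $O(\|x-y\|^{4-d})$ error. Hence each diagonal block contributes at least $cN^{-2}N^{(d-2)/d}$. Summing over $N$ cells and multiplying by $N^2$ gives $\mathbb E\,E_G\le -C_2N^{2-2/d}$. Therefore some configuration satisfies the same bound, and so does $\mathcal E_G(N)$.

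\emph{Lower bound (smoothing plus positive definiteness).} Let $G_\varepsilon(x,y)$ be the average of $G(x,\cdot)$ over the spherical cap of radius $\varepsilon$ about $y$, or more symmetrically a double cap average. Since $\Delta G(x,\cdot)=\delta_x-V^{-1}$, the function $G(x,\cdot)$ is superharmonic up to the smooth term $V^{-1}$. The spherical mean value inequality then gives $G(x,y)\ge G_\varepsilon(x,y)-C\varepsilon^2$ for all $x\neq y$. The smoothed kernel has coefficients $\lambda_n^{-1}\hat\chi_\varepsilon(n)^2\ge 0$, so it is positive definite with zero mean, and it is bounded: $G_\varepsilon(x,x)\le C\varepsilon^{2-d}$. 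Thus for any $\omega_N$,
\[
E_G(\omega_N)\ge \sum_{i\ne j}G_\varepsilon(x_i,x_j)-CN^2\varepsilon^2
\ge -N\,G_\varepsilon(x,x)-CN^2\varepsilon^2\ge -CN\varepsilon^{2-d}-CN^2\varepsilon^2 .
\]
Choosing $\varepsilon=N^{-1/d}$ yields $E_G(\omega_N)\ge -C_1N^{2-2/d}$, hence $\mathcal E_G(N)\ge -C_1N^{2-2/d}$.

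The main technical point is the lower bound step $G\ge G_\varepsilon-C\varepsilon^2$. To make positive definiteness immediate I would use the symmetric double average $\iint G(u,v)\,d\mu_x(u)\,d\mu_y(v)$ over caps. The required inequality must then be checked by applying the superharmonic mean value property in each variable. Uniformity of the $O(\varepsilon^2)$ error from the $V^{-1}$ term should follow from Green's representation on caps. The upper bound only needs care in ensuring that the lower order part of $G$ cannot cancel the singular part on sets of diameter $O(N^{-1/d})$, which holds for $N$ large.
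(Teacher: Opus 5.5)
Your proof is correct. The upper bound is the same argument as the paper's Appendix~B: an equal-area partition with cells of diameter $O(N^{-1/d})$, averaging over the product of the normalized cell measures (your jittered sampling), $I_G(\sigma)=0$, and the near-diagonal blow-up of $G$ on the diagonal blocks.

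The lower bound is where you differ. The paper does not prove it; it cites \cite{Ste21} and \cite[Theorem 4.2]{BelL23}. You give a self-contained cap-smoothing proof, and the step you flagged does close:
\begin{itemize}
\item With the analyst's sign convention, $\operatorname{div}\nabla G(x,\cdot)=V^{-1}-\delta_x\le V^{-1}$.
\item The spherical-mean identity of the type \eqref{average}, used in the proof of Lemma~\ref{lemma:meanv_subh}, together with $\int_0^r \sigma_d(B(y,t))/\sigma_{d-1}(S(y,t))\,dt\le Cr^2$, gives $\int G(x,v)\,d\mu_y(v)\le G(x,y)+C\varepsilon^2$ for all $x\neq y$.
\item This holds even when $x$ lies in the cap, since the point mass only lowers the average.
\item Applying it once in each variable yields $G_\varepsilon\le G+2C\varepsilon^2$ for the double cap average, whose coefficients $\lambda_n^{-1}\hat\chi_\varepsilon(n)^2$ are nonnegative.
\item The choice $\varepsilon=N^{-1/d}$ balances $N\varepsilon^{2-d}$ against $N^2\varepsilon^2$.
\end{itemize}

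Compared with the cited route: the Beltr\'an--Lizarte cap-mean formulas collected in Appendix~B are the equality version of your one-sided inequality. These are the term $K(\S^d,a)$ in \eqref{eq:GreenExp}, plus the separate formula \eqref{eq:GreenExp1} for a point inside the cap. Exact formulas give an explicit constant, at the cost of incomplete-beta and hypergeometric computations. Your argument needs only $\Delta G(x,\cdot)=\delta_x-V^{-1}$, a mean value inequality on small caps, and spectral positivity, at the cost of non-explicit constants.

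One harmless slip: for $d=3$ the near-diagonal remainder is $O(1)$, not $O(\|x-y\|)$. The mean-zero Green function of $\S^3$ has a nonzero constant term; in geodesic distance $\theta$ it is $\frac{1}{4\pi\theta}-\frac{3}{8\pi^2}+O(\theta)$. You only use $G=a_d\|x-y\|^{2-d}(1+o(1))$ and $G\le C\|x-y\|^{2-d}$ near the diagonal, so nothing changes.
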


\begin{remark}
 We note that, similarly to the Riesz $s$-energy, the first order term of the discrete Green energy $\mathcal{E}_G(N)$ is of the form $I_G N^2$, where $I_G:= \int_{\mathbb{S}^d} \int_{\mathbb{S}^d} G (x,y) d \sigma(x) d\sigma(y)$. However, due to the properties of the Green function, this integral vanishes, which is why it doesn't appear in equation (\ref{eq: Green energy second order asymptotics}).
\end{remark}

The lower bound was proven for general compact Riemannian manifolds in \cite[pg. 4, Corollary]{Ste21}, and proven for spheres, with an explicit constant, in \cite[Theorem 4.2]{BelL23}. For the other compact, connected, two-point homogeneous spaces, this lower bound (with explicit constants) was proven in \cite[Theorem 1.1]{BelTL24} and \cite[Theorem 4.14]{AndDGMS23}, and the upper bound was proven in \cite[Corollary 4.3]{AndDGMS23}. There is not currently an explicit proof of the upper bound for the minimal Green energy on the sphere, though the community has understood it to be as above for years. For completeness, we include a proof in Appendix \ref{sec:Upper bound on Green energy}.


Note that on $\S^2$, the Green function coincides, up to multiplicative and additive constants, with the logarithmic potential, namely 
$$
G(\S^2;p,q) = - \frac{1}{2\pi} \log\|p-q\| - \frac{1}{4 \pi} + \frac{\log 2}{2 \pi},
$$
which is the case treated in Section \ref{sec:RieszEnergyofGreedySeq}.

In Section \ref{sec:Green energy}, as in the Riesz and logarithmic cases, we first establish the well-separation of greedy Green energy configurations and then derive bounds for their polarization and energy.

For a compact connected Riemannian manifold $\mathcal{M}$ with geodesic distance $d_{\mathcal{M}}(x,y)$ and Green kernel $G$, let $\delta_{\mathcal{M}}(\omega_N) : = \min \{ d_{\mathcal{M}} (x_i, x_j): \: x_i, x_j \in \omega_N, j\ne i\}$ be the separation distance of a set $\{x_1, ..., x_N\} = \omega_N \subset \mathcal{M}$.

\begin{theorem}\label{thm:Separation bound for Greedy green points}
Any sequence $\{\omega_N\}_{N \in \mathbb{N}}$ of greedy points for the discrete Green energy on a compact connected Riemannian manifold $\mathcal{M}$ of dimension $d\geq 2$ is well-separated, i.e.
$$
\delta_{\mathcal{M}}(\omega_N) \geq \frac{C}{N^{1/d}}, \;\;\;  \forall N \in \mathbb{N}, 
$$
where $C>0$ is a constant independent of $N$.
\end{theorem}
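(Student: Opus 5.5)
The natural route is the superharmonicity argument of Dahlberg and Brauchart--Dragnev--Saff, transplanted to the Green kernel. Fix $N$ and the greedy configuration $\omega_N=\{x_1,\dots,x_N\}$. Let $x_i,x_j$ be a pair realizing the separation, with $i<j$. By \eqref{eq:greedy def}, $x_j$ minimizes $U_{j-1}(x):=\sum_{k<j} G(x,x_k)$ over $\mathcal M$. It therefore suffices to show that the point $x_{n+1}$ is at geodesic distance at least $C n^{-1/d}$ from every $x_k$, $k\le n$, with $C$ independent of $n$. Since $n\le N$, this gives the claim.

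\textbf{Step 1: local expansion of $G$.} I use the standard facts about $G$ on a compact $d$-manifold: there are $r_0>0$ and $A>0$ such that for $d_{\mathcal M}(x,y)<r_0$,
\[
G(x,y)=c_d\, d_{\mathcal M}(x,y)^{2-d}+O\bigl(d_{\mathcal M}(x,y)^{3-d}\bigr)\quad (d\ge3),\qquad G(x,y)=-\tfrac1{2\pi}\log d_{\mathcal M}(x,y)+O(1)\quad(d=2).
\]
I also use that $G\ge -A$ everywhere, which holds because $G$ is continuous off the diagonal and tends to $+\infty$ on it. Finally, $\Delta_x G(x,y)=-V^{-1}$ for $x\ne y$, so each $G(\cdot,x_k)$ is superharmonic up to the constant $V^{-1}$, in the sign convention $\Delta=-\operatorname{div}\nabla$.

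\textbf{Step 2: mean-value comparison.} Let $y=x_{n+1}$, let $x_k$ be its nearest predecessor, and set $\delta=d_{\mathcal M}(y,x_k)$. Take a geodesic ball $B=B(x_k,\rho)$ with $\rho=\kappa n^{-1/d}$, where $\kappa$ is to be chosen. Suppose $\delta<\rho/2$. Minimality of $y$ gives $U_n(y)\le \frac1{\vol(B)}\int_B U_n\,d\vol$. Split $U_n=G(\cdot,x_k)+\sum_{l\ne k}G(\cdot,x_l)$.

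For the remaining sum, each term $h_l=G(\cdot,x_l)$ satisfies $\Delta h_l=-V^{-1}$ wherever it is smooth, so $h_l+\frac{1}{2dV}r^2$-type corrections (more precisely, $h_l$ minus $V^{-1}$ times the solution of $\Delta w=1$ on a small ball) are superharmonic in the Riemannian sense on any ball avoiding $x_l$. A Riemannian mean-value inequality for superharmonic functions on small balls, with curvature error $1+O(\rho^2)$, then gives $\frac{1}{\vol(B)}\int_B h_l \le h_l(x_k)+C\rho^2$. This holds even when $x_l\in B$, by superharmonicity across the singularity. Summing over at most $n$ terms produces an error $Cn\rho^2 = C\kappa^2 n^{1-2/d}$. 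Doing the same comparison at the centre $y$ instead of $x_k$ (taking the ball centred at $y$ is cleaner) gives
\[
G(y,x_k)\le \frac{1}{\vol(B)}\int_B G(\cdot,x_k)\,d\vol + Cn\rho^2 .
\]

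\textbf{Step 3: contradiction.} The left side is at least $c_d\delta^{2-d}-O(\delta^{3-d})$. The average of $G(\cdot,x_k)$ over a ball of radius $\rho$ is at most $C\rho^{2-d}$. This yields
\[
c\,\delta^{2-d}\le C\rho^{2-d}+C n\rho^{2},\qquad \rho=\kappa n^{-1/d},
\]
so that $\delta^{2-d}\lesssim(\kappa^{2-d}+\kappa^2)n^{(d-2)/d}$, i.e.\ $\delta\ge C n^{-1/d}$. In the case $d=2$, the logarithmic version gives $\log(1/\delta)\le\log(1/\rho)+O(1)+Cn\rho^2$, so $\delta\ge c\rho$. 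If instead $\delta\ge\rho/2$, the bound holds trivially. For small $n$, where $\rho>r_0$, compactness and continuity give the bound directly, because $x_{n+1}$ avoids the poles.

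\textbf{Main obstacle.} The hard part is the mean-value step on a general manifold. One must control the superharmonic mean-value inequality uniformly in the centre, with $O(\rho^2)$ errors that sum to $O(n\rho^2)=O(1)\cdot n^{1-2/d}$, which is the same order as the main term and hence harmless. One must also handle poles $x_l$ lying inside the ball. I would handle both by working in normal coordinates, together with the fact that $G(\cdot,x_l)+V^{-1}w$ is genuinely superharmonic, where $\Delta w=1$ and $w=O(\rho^2)$ on $B$.
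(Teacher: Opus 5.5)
Your argument is sound in outline, but it follows a genuinely different route from the paper.

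\textbf{The paper's route.} The paper never averages over geodesic balls. It works with harmonic balls $B^h(p,a)$ defined through partial balayage, for which the mean-value inequality for superharmonic functions is exact by construction. It sets $h=\frac VN\sum_i G(x_i,\cdot)$, minimized at $x^*$. The test function $f=h(x^*)-\frac VN\sum_{i\neq j}G(x_i,\cdot)$ touches $\frac VN G(x_j,\cdot)$ from below at $x^*$ and satisfies $\Delta f\le 1-\frac1N$. The obstacle-type Lemma~\ref{lem:GreenLemma} therefore places $x^*$ outside every $B^h(x_j,V/N)$. The theorem of \cite{Del19} that a harmonic ball of volume $a$ contains a geodesic ball of radius $\asymp a^{1/d}$ then turns this into $d_{\mathcal M}(x_j,x^*)\ge cN^{-1/d}$.

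\textbf{What each approach buys.} In the paper, all the Riemannian geometry is outsourced to that inclusion theorem. No local expansion of $G$ is needed, and $d=2$ is not a separate case. Your Dahlberg/Brauchart--Dragnev--Saff style argument averages over a small ball centred at the minimizer, pays $O(\rho^2)$ per remaining point for the background $V^{-1}$, and balances $\rho^{2-d}$ against $n\rho^2$. It is more elementary and makes the mechanism explicit. However, it must prove by hand what the harmonic-ball theorem supplies: a mean-value inequality on small geodesic balls that is uniform in the centre.

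When you write it up, three details need correcting.

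\emph{The sign in Step 2.} With $\Delta=-\operatorname{div}\nabla$ one has $\Delta h_l=-V^{-1}$ off $x_l$. So the superharmonic function is $h_l+V^{-1}w$ with $\Delta w=1$, which to leading order is $h_l-\frac{r^2}{2dV}$, as in your last paragraph. It is not $h_l+\frac{r^2}{2dV}$ or $h_l-V^{-1}w$.

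\emph{The centre of the ball.} The ball must be centred at $y=x_{n+1}$, since that is where minimality holds. With this choice the case split on $\delta<\rho/2$ disappears. Taking $\kappa<r_0$ also removes the small-$n$ issue; your appeal to compactness and continuity there would not give uniformity over configurations.

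\emph{The mean-value step.} On a general manifold, the uniform average over a geodesic ball does not obey the superharmonic mean-value inequality with an additive $O(\rho^2)$ error per term. The cleanest repair is to average against $\mu=\vol(B)^{-1}\int_0^\rho|S_t|\,\omega_t\,dt$. Here $\omega_t$ is the harmonic measure of $B(y,t)$ at $y$, and $|S_t|$ is the area of $\partial B(y,t)$. Then:
\begin{itemize}
\item minimality gives $U_n(y)\le\int U_n\,d\mu$, since this holds for any probability measure;
\item superharmonicity of $h_l+V^{-1}w$, with $w\ge 0$ vanishing on $\partial B$, gives $\int h_l\,d\mu\le h_l(y)+C\rho^2$ exactly;
\item the $x_k$ term needs only $G\ge -A$ and a density bound $C/\vol(B)$ for $\mu$, which follows from comparability of harmonic measure with normalized surface measure on small geodesic spheres.
\end{itemize}
Alternatively, you can keep the uniform average with a multiplicative error $1+O(\rho^2)$. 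That produces per-term errors $C\rho^2(h_l(y)+A)$ rather than $C\rho^2$. These still sum to $O(n\rho^2)$, because $\sum_{l\ne k}h_l(y)\le U_n(y)+A$ and $U_n(y)=\min U_n\le V^{-1}\int U_n\,d\vol=O(n)$.
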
 

\begin{theorem}\label{thm:GreenPolarizationAsymptotics}
Suppose $d \geq 3$ and the sequence of configurations $\{\omega_{N,d}\} \subset \S^{d}$ is well-separated. Then there exist positive constants $c_{G,d}, b_{G,d}$ such that the following hold for all $N \in \mathbb{N}$. If $d \geq 3$,
\begin{equation} \label{eq: GreenPolarizationUpperBound}
    P_G(\omega_{N,d}) \leq  c_{G,d} \Big( D(N) - b_{G,d}  \Big) N^{1-\frac{2}{d}}
\end{equation}
where $D(N)$ is defined by 
$$ E_{G}(\omega_{N,d}) =  - D(N)N^{2-2/d}. $$

In particular, (\ref{eq: GreenPolarizationUpperBound}) holds for greedy Green energy sequences. Although the behavior of $D(N)$ for arbitrary well-separated configurations is unknown, in Theorem \ref{thm: Energy for Green kernel} we show that for greedy sequences $C_1 \leq D(N) \leq C_2$ for some positive constants $C_1, C_2$.
\end{theorem}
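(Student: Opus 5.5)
The plan is to bound $P_G(\omega_{N,d})$ by the average of the potential $U(x):=\sum_{j=1}^N G(x,x_j)$ over the sphere with small caps around the points removed. The key input is that $\int_{\S^d} U\,d\sigma = N I_G = 0$. Integrating over the caps instead produces the energy $E_G = -D(N)N^{2-2/d}$ with a \emph{positive} coefficient in front of $D(N)$.

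\textbf{Setup.} By well-separation (in the greedy case, Theorem \ref{thm:Separation bound for Greedy green points}), fix a small $\epsilon>0$ and set $r=\epsilon N^{-1/d}$. Then the geodesic caps $B_j=B(x_j,r)$ are pairwise disjoint, each has $\sigma(B_j)=\kappa_d\epsilon^d N^{-1}(1+o(1))$, and their union $B$ satisfies $\sigma(B)\le C\epsilon^d\le 1/2$ once $\epsilon$ is small. Let $\mu=\sigma|_{\S^d\setminus B}/\sigma(\S^d\setminus B)$. Since $\mu$ is a probability measure,
$$P_G(\omega_{N,d})\le\int U\,d\mu=\frac{-\sum_j\int_{B_j}U\,d\sigma}{1-\sigma(B)},$$
where we used $\int U\,d\sigma=0$. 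So it suffices to bound each $\int_{B_j}U\,d\sigma$ from below.

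\textbf{Lower bound on the cap integrals.} Write $U=G(\cdot,x_j)+\sum_{i\ne j}G(\cdot,x_i)$ on $B_j$.
\begin{enumerate}[label=(\alph*)]
\item For $i\neq j$: with $\Delta=-\operatorname{div}\nabla$, the relation $\Delta G(\cdot,x_i)=-V^{-1}$ away from $x_i$ means $G(\cdot,x_i)$ is subharmonic on $B_j$ (because $x_i\notin B_j$). Averaging over geodesic spheres centred at $x_j$ gives a radial function, monotone in the radius, so the sub-mean-value inequality on the sphere holds. Hence $\int_{B_j}G(x,x_i)\,d\sigma\ge\sigma(B_j)G(x_j,x_i)$ exactly, with no $O(r^2)$ error.
\item For the self term: $G(x,x_j)=a_d\,d(x,x_j)^{2-d}+O(1)$ with $a_d>0$. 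Therefore $\int_{B_j}G(x,x_j)\,d\sigma\ge c\,r^d\cdot r^{2-d}=c\,\epsilon^2N^{-2/d}$ for $\epsilon$ small.
\end{enumerate}
Summing (a) and (b) over $j$ gives
$$\sum_j\int_{B_j}U\,d\sigma\ \ge\ \sigma(B_1)E_G+c\,\epsilon^2N^{1-2/d}=-\kappa_d\epsilon^d(1+o(1))\,D(N)N^{1-2/d}+c\,\epsilon^2N^{1-2/d}.$$

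\textbf{Conclusion.} Substituting into the bound from the setup,
$$P_G\le\frac{\kappa_d'\epsilon^d D(N)-c\,\epsilon^2}{1-\sigma(B)}\,N^{1-2/d}=c_{G,d}\big(D(N)-b_{G,d}\big)N^{1-2/d},$$
with $c_{G,d}\asymp\epsilon^d$ and $b_{G,d}\asymp\epsilon^{2-d}>0$. The sign of $D$ and of the $1-\sigma(B)$ normalization need a little care, but both constants can be adjusted since $1/2\le 1-\sigma(B)\le 1$.

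\textbf{Main obstacle.} The delicate step is (a): justifying the sub-mean-value inequality for $\Delta$-subharmonic functions on geodesic caps of $\S^d$ with the paper's sign convention, and controlling the $o(1)$ in $\sigma(B_j)$ uniformly in $N$. For the latter, I would simply use the exact cap volume and absorb its ratio into the constants.
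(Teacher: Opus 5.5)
Your proof is correct. It shares the paper's skeleton: test the potential against the normalized uniform measure on the complement of $N$ disjoint caps of radius $\asymp N^{-1/d}$, and use $\int_{\S^d}G(x,y)\,d\sigma(x)=0$. The difference is where the negative $N^{1-2/d}$ term comes from.

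The paper evaluates the cap integrals exactly with the Beltr\'an--Lizarte mean-value formulas. For $x_i\notin B(x_j,r_N)$, the cap average of $G(\cdot,x_i)$ equals $G(x_j,x_i)+K(r_N)$, where $K(r_N)=r_N^2/((2d+4)V_d)+o(r_N^2)>0$. The $N(N-1)$ copies of $K(r_N)$, weighted by $\sigma(B(p,r_N))\approx A/N$, produce $-b_{G,d}N^{1-2/d}$. The self-term, computed from the formula for $p\in B(p_0,a)$, is negative and is simply discarded.

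You do the reverse. The plain sub-mean-value inequality throws away $K(r)$ in the cross terms, and the negative term comes from the self-interaction $\int_{B_j}G(x,x_j)\,d\sigma\gtrsim r^2$. This is exactly the mechanism of the paper's Riesz argument in Section~\ref{thm:sec:ProofPolarizRiesz}. Both contributions have order $N^{1-2/d}$, so either suffices. Your route needs only subharmonicity of $G(\cdot,x_i)$ off $x_i$ and the leading singularity of $G$. The paper's route yields explicit constants such as $b_{G,d}=c^2/(4(2d+4)V_d)$.

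Two minor fixes are needed.

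First, the expansion $G(x,x_j)=a_d\,d(x,x_j)^{2-d}+O(1)$ fails for $d\ge 4$, where lower-order singular terms such as $\log d(x,x_j)$ or $d(x,x_j)^{4-d}$ appear. You only need $G(x,x_j)\ge\tfrac{a_d}{2}\,d(x,x_j)^{2-d}$ near $x_j$, which follows from $G=a_d\,d^{2-d}(1+o(1))$.

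Second, the bounds $1/2\le 1-\sigma(B)\le 1$ do not by themselves give a single constant $c_{G,d}$. For general well-separated configurations, $D(N)$ can be negative with $|D(N)|\asymp N^{2/d}$ (e.g.\ points clustered in a fixed cap). Then the $N$-dependence of the factor $\sigma(B)/(1-\sigma(B))$ multiplying $D(N)$ contributes at order $N^{1-2/d}$. Instead, choose $r_N$ so that $N\sigma(B(x_j,r_N))=A$ exactly. The coefficient of $D(N)$ is then the constant $A/(1-A)$, while the self-term still gives $-\mathrm{const}\cdot N^{1-2/d}$. The paper's own proof glosses over this same point.
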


The proof of this result is found in Section \ref{sec:GreenPolarization}.

\begin{theorem} \label{thm: Energy for Green kernel}
For $d \geq 3$, and $G$ the Green kernel for $\S^d$, if $\omega_{G,N}$ denotes the first $N$ elements of a greedy $G$-energy sequence on $\mathcal{M}$, there exist positive constants $C_{G,1}, C_{G,2}$ such that
\begin{equation}\label{eq:Greedy Green Asymptotic}
- C_{G,1} N^{2 - 2/d} \leq E_G(\omega_N) \leq - C_{G,2} N^{2 - 2/d}
\end{equation}

\end{theorem}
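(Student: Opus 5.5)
The lower bound in \eqref{eq:Greedy Green Asymptotic} needs no work specific to greedy sequences: since $E_G(\omega_N) \geq \mathcal{E}_G(N)$, the lower bound in \eqref{eq: Green energy second order asymptotics} gives $E_G(\omega_N)\ge -C_1 N^{2-2/d}$ for $N$ large. For the finitely many small $N$ we enlarge the constant. Here we use that $E_G(\omega_N)$ is finite and bounded below for each $N$, since $G$ is bounded below on $\S^d\times\S^d$. So the content of the theorem is the upper bound, and we plan to obtain it from the relation \eqref{eq:Gr Energy POlarization relation},
$$E_G(\omega_N) = 2\sum_{k=1}^{N-1} P_G(\omega_k),$$
together with the polarization bound of Theorem \ref{thm:GreenPolarizationAsymptotics}.

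By Theorem \ref{thm:Separation bound for Greedy green points}, the greedy Green sequence is well-separated. Hence Theorem \ref{thm:GreenPolarizationAsymptotics} applies and gives
$$P_G(\omega_k) \le c_{G,d}\bigl(D(k)-b_{G,d}\bigr)k^{1-2/d}, \qquad D(k) = -E_G(\omega_k)\,k^{-(2-2/d)}.$$
The lower bound just proved gives $D(k)\le C_{G,1}$, but this alone is useless because it yields only a positive upper bound for $P_G$. Instead we use the bound in a self-referential way. Substituting and summing,
$$-D(N)N^{2-2/d} = E_G(\omega_N) \le 2c_{G,d}\sum_{k=1}^{N-1}\bigl(D(k)-b_{G,d}\bigr)k^{1-2/d}.$$
We want to conclude that $D(N)\ge C_2>0$ for all large $N$.

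The key step is an induction or bootstrap. Suppose, as the induction hypothesis, that $D(k)\ge \eta$ for $k<N$. That hypothesis bounds $D(k)$ from below, but the inequality needs $D(k)$ bounded from above in order to control the sum. The natural approach is therefore a contradiction argument. Using $D(k)\le C_{G,1}$, the right-hand side is at most $2c_{G,d}(C_{G,1}-b_{G,d})\cdot\frac{d}{2d-2}N^{2-2/d}$, which gives a lower bound on $D(N)$ only if $C_{G,1}<b_{G,d}$, and that is not guaranteed.

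A cleaner route is to estimate the polarization directly by averaging. The minimum over $x\in\S^d$ is at most the average of $\sum_j G(x,x_j)$ over $x$ in the complement of small caps of radius $\epsilon k^{-1/d}$ around the points, since the integral of $G$ over the full sphere vanishes. Well-separation makes the cap contributions comparable: each cap carries a positive mass of about $(k^{-1/d})^{2-d}\cdot k^{-1}$ from its own point. Removing these positive contributions yields
$$P_G(\omega_k)\le -b\,k^{1-2/d}.$$
This is presumably the mechanism behind the term $-b_{G,d}$ in Theorem \ref{thm:GreenPolarizationAsymptotics}. The $D(k)$ term there comes from the cross interactions between points and caps.

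If we can bound those cross terms by $o(k^{1-2/d})$, or show they are nonpositive in the average, then summing gives
$$E_G(\omega_N)\le -2b\sum_{k<N}k^{1-2/d}\le -C_{G,2}N^{2-2/d}.$$
The main obstacle is exactly this control of the $D(k)$ term, that is, arranging the constants so that $c_{G,d}(D(k)-b_{G,d})$ is uniformly negative. My first attempt would be a strong induction with the hypothesis $D(k)\le A$. The lower bound $D(k)\ge\eta$ should propagate because the $-b_{G,d}$ contribution accumulates linearly in the sum. If the constants do not cooperate, I would fall back on the direct cap-averaging estimate just described.
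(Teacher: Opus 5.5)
\textbf{There is a genuine gap.} Your lower bound is fine, but the upper bound is never actually proved. You correctly locate the obstacle, namely the positive term $c_{G,d}D(k)k^{1-2/d}$ in Theorem~\ref{thm:GreenPolarizationAsymptotics}, and then leave it open. Neither proposed way out works as stated.

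In the summed inequality $-D(N)N^{2-2/d}\le 2c_{G,d}\sum_{k<N}(D(k)-b_{G,d})k^{1-2/d}$, every $D(k)$ enters with a positive sign. So an induction on a lower bound $D(k)\ge\eta$ gets no traction there. An upper bound $D(k)\le A$ only helps if $A<b_{G,d}$, which nothing guarantees.

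The fallback also fails: the cross terms are neither $o(k^{1-2/d})$ nor nonpositive. By the exact mean value formula \eqref{eq:GreenExp} they equal $-\sigma(B(p,r_k))\bigl(E_G(\omega_k)+k(k-1)K(r_k)\bigr)$. The first part is $\sigma(B(p,r_k))D(k)k^{2-2/d}\asymp D(k)k^{1-2/d}>0$. This \emph{is} the $D(k)$ term, so averaging against that measure cannot remove it.

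\textbf{The missing idea (the paper's route).} Do not sum. Use the single step $E_G(\omega_{N+1})=E_G(\omega_N)+2P_G(\omega_N)$ and absorb the bad term into the current energy. Since $c_{G,d}D(N)N^{1-2/d}=-c_{G,d}E_G(\omega_N)/N$, setting $F(N):=-E_G(\omega_N)=D(N)N^{2-2/d}$ gives
$F(N+1)\ge\bigl(1-\tfrac{2c_{G,d}}{N}\bigr)F(N)+2c_{G,d}b_{G,d}N^{1-2/d}$.
Dividing by $(N+1)^{2-2/d}$ gives
$D(N+1)\ge D(N)\bigl(1-\tfrac{\alpha}{N}\bigr)+\tfrac{\beta}{N}+o(N^{-1})$ with $\alpha,\beta>0$.

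The right-hand side is increasing in $D(N)$ for large $N$. Hence a lower bound $D(N)\ge\eta$ with $\eta<\beta/\alpha$ does propagate in this form; it is exactly the induction hypothesis you discarded. If instead $D(N)<\eta$, then $D$ increases by $\gtrsim 1/N$ per step, a non-summable amount, so it must climb back above $\eta$.

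For small $N$, use $E_G(\omega_N)\le E_G(\omega_2)<0$, which holds because $P_G\le\int\sum_j G(\cdot,x_j)\,d\sigma=0$. Together these give $D(N)\ge C_{G,2}$ for all $N\ge 2$.

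\textbf{An alternative fix for your cap-averaging.} It can also be saved by a scaling you did not exploit. Take caps of radius $\epsilon k^{-1/d}$. Removing the self-interaction gains $\asymp\epsilon^2k^{1-2/d}$, since $G(x,p)\asymp\|x-p\|^{2-d}$. The cross term costs at most $\asymp\epsilon^{d}C_{G,1}k^{1-2/d}$, using $D(k)\le C_{G,1}$. For $d\ge 3$ and $\epsilon$ small, this gives $P_G(\omega_k)\le -b\,k^{1-2/d}$ directly, and summing finishes the proof. This is the same device as shrinking $c$ in the Riesz argument of Section~\ref{thm:sec:ProofPolarizRiesz}.

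This also explains why your first attempt stalls. The paper discards the self-interaction term, and its $b_{G,d}$ comes only from the small $K(r_N)$ correction, so it is of order $\epsilon^2$ relative to the cross term rather than dominating it.
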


The upper bound for \eqref{eq:Greedy Green Asymptotic} is proved in Section \ref{sec:Asymptotics for Green energy Greedy}, whereas the lower bound follows from the bound for minimal energy, found in \cite[Theorem 4.2]{BelL23}.

The paper is organized as follows. In Sections \ref{sec:Proof Polarization} and \ref{sec:EnergyAsymptotics}, we prove Theorems \ref{thm:Riesz Polarization} and \ref{thm:greedyenergy}, respectively. In Section \ref{sec:Green energy} we examine the energy, polarization, and separation properties of greedy Green sequences. Section~\ref{sec:separation} presents the proofs of the separation properties of greedy \( s \)-Riesz points. Finally, in Section \ref{sec:polarization for s<d-2}, we establish polarization bounds for the \( s \)-Riesz kernel in the case \( s < d-2 \).

\section{Proof of Theorem \ref{thm:Riesz Polarization}}\label{sec:Proof Polarization}

Suppose $s \in [0, d)$, such that for all $y \in \mathbb{S}^d$, as a function of $x$, $K_s(x,y)$, given by (\ref{def:kernel}), is subharmonic on $\mathbb{S}^d \setminus \{y\}$. Here, subharmonicity is defined via the Laplace-Beltrami operator, or alternatively, that the average over a geodesic ball on $\mathbb{S}^d$ is greater than or equal to the value at the center of such a ball. Note that on the sphere, the Laplace-Beltrami operator $\bigtriangleup$ can be written using standard spherical coordinates $\theta_1, ..., \theta_{d}$ (see, e.g., \cite[equation (2.2.4)]{KozMR01})
\begin{equation}\label{eq:Laplace-Beltrami on Sphere}
\bigtriangleup =  -\sum_{j=1}^{d} \frac{1}{q_j (\sin(\theta_j))^{d-j}} \frac{\partial}{\partial \theta_j} \Big( \big(\sin(\theta_j) \big)^{d-j} \frac{\partial}{\partial \theta_j} \Big),
\end{equation}
where $q_1 = 1$ and $q_j = \prod_{k=1}^{j-1} \sin^2(\theta_k)$.

For a fixed $y \in \mathbb{S}^d$, we may choose the coordinates so that $\cos(\theta_1) = \langle y, x \rangle$, so, using \eqref{eq:Riesz on Sphere as inner product},
\begin{equation}\label{eq:Laplace Beltrami of Riesz on Sphere}
\bigtriangleup_{x} K_s(x,y) = -\frac{1}{2} \|x - y\|^{-s-2} \Big( s +2 + (s+2-2d) \langle x, y \rangle \Big),
\end{equation}
which is nonpositive for $x \in \mathbb{S}^d \setminus\{y\}$ when $d-2 \leq s$. For $s=0$, we have

\begin{equation}
\bigtriangleup_{x} K_0(x,y) =  \frac{ (d-1) \langle x, y \rangle -1 }{\|x-y\|^2},
\end{equation}
so that $\bigtriangleup_{x} K_0(x,y) = -1/2$ for $d=2$.

We next state the mean value property of subharmonic functions on $\S^d$, which is central to our arguments.
\begin{lemma}(S. Gardiner, \cite{Gar})
	Suppose $\bigtriangleup u(x) \leq 0$ on an open subset $U$ of the unit sphere $\mathbb{S}^d$. Then mean value inequality holds for $u$ on $U$, i.e. for any $B(p, r_0) \subset U$ with $r_0 < \pi$,
	\begin{equation} \label{eq:meanv}
	u(p) \leq \frac{1}{\sigma(B(p, r_0))} \int_{B(p, r_0)} u(x) d \sigma(x) .
	\end{equation}
		
\end{lemma}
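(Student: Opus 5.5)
The plan is the classical spherical-means argument, carried out in geodesic polar coordinates centred at $p$. We assume $u\in C^2(U)$, which is all that is needed for the application to $K_s(\cdot,y)$ on $\mathbb{S}^d\setminus\{y\}$. Let $t\in(0,\pi)$ be the geodesic distance from $p$ and $\xi\in\mathbb{S}^{d-1}$ the initial direction. Since $r_0<\pi$, the map $(t,\xi)\mapsto \exp_p(t\xi)$ is a diffeomorphism from $(0,r_0)\times\mathbb{S}^{d-1}$ onto $B(p,r_0)\setminus\{p\}$, because the only cut point of $p$ is $-p$. In these coordinates the volume element is $\sin^{d-1}(t)\,dt\,d\xi$. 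We define the spherical mean
$$
M(r)=\frac{1}{|\mathbb{S}^{d-1}|}\int_{\mathbb{S}^{d-1}} u(\exp_p(r\xi))\,d\xi,\qquad 0<r<r_0 .
$$
The first step is that $M(r)\to u(p)$ as $r\to0^+$, which follows from continuity of $u$ at $p$.

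The second step is to show $M$ is nondecreasing. Differentiating under the integral gives $M'(r)=|\mathbb{S}^{d-1}|^{-1}\int_{\mathbb{S}^{d-1}}\partial_t u(\exp_p(r\xi))\,d\xi$. This equals the flux of $\nabla u$ through the geodesic sphere $\partial B(p,r)$, divided by that sphere's area $|\mathbb{S}^{d-1}|\sin^{d-1}(r)$. By the divergence theorem on $B(p,r)$, with the paper's sign convention $\bigtriangleup=-\operatorname{div}\nabla$,
$$
M'(r)=\frac{1}{|\mathbb{S}^{d-1}|\sin^{d-1}(r)}\int_{B(p,r)}\bigl(-\bigtriangleup u\bigr)\,d\mathrm{vol}\;\ge 0 .
$$
Here $\sin r>0$ because $0<r<\pi$. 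Hence $M(r)\ge u(p)$ for every $0<r<r_0$.

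The third step integrates in polar coordinates:
$$
\frac{1}{\sigma(B(p,r_0))}\int_{B(p,r_0)}u\,d\sigma=\frac{\int_0^{r_0}M(t)\sin^{d-1}(t)\,dt}{\int_0^{r_0}\sin^{d-1}(t)\,dt}\ \ge\ u(p),
$$
which is \eqref{eq:meanv}.

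The main point requiring care is the boundary of the ball. The divergence theorem needs $u\in C^2$ up to $\partial B(p,r)$, which holds for every $r<r_0$ when $B(p,r_0)\subset U$ is open. If the hypothesis only gives the open ball inside $U$, we run the argument on $B(p,r)$ for $r<r_0$ and let $r\uparrow r_0$. In our applications $u$ is a finite sum of kernels $K_s(\cdot,x_j)$, which are integrable, so the averages converge by dominated or monotone convergence.

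The hypothesis $r_0<\pi$ is used exactly once: it keeps the polar coordinate chart valid and $\sin^{d-1}(t)$ positive. For $r_0=\pi$ the ball is all of $\mathbb{S}^d$ minus $-p$, and the same computation goes through in the limit. If one wanted the lemma for merely upper semicontinuous distributional supersolutions, the fallback would be to mollify by spherical convolution with zonal kernels, which commutes with $\bigtriangleup$, and pass to the limit. For the smooth kernels used here this is unnecessary.
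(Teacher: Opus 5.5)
Your proof is correct, but it follows a different route from the paper's.

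\textbf{The paper's route.} The paper follows Gardiner. It extends $u$ to the cone $C[U]\subset\mathbb{R}^{d+1}$ by setting $u^*(x)=u(x/\|x\|)$. This extension is subharmonic, because for a function homogeneous of degree $0$ the Euclidean Laplacian is $\|x\|^{-2}$ times the spherical one. The paper then applies the Euclidean sub-mean-value inequality with respect to the harmonic measure of the cone $C[B(p,r)]$ at $p$, citing Hayman--Kennedy and noting that the vertex and the point at infinity carry no mass. Finally, it uses the axial symmetry of that harmonic measure, together with the fact that $u^*$ is constant along rays, to identify the right-hand side with the normalized average of $u$ over the geodesic sphere $S(p,r)$. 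The last integration in $r$ is the same as your third step.

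\textbf{Your route.} You prove the spherical-mean inequality intrinsically, using geodesic polar coordinates and the divergence theorem on $B(p,r)$.

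\textbf{Comparison.} The paper's argument imports Euclidean potential theory wholesale. It therefore works for general (upper semicontinuous or distributional) subharmonic $u$ with no smoothing step, but it relies on nontrivial facts about harmonic measure of unbounded cones. Your argument is elementary and self-contained. It needs $u\in C^2$, which is enough for the kernels $K_s(\cdot,y)$ off the diagonal. It also gives more than an inequality: the exact identity
\[
M'(r)=\bigl(|\mathbb{S}^{d-1}|\sin^{d-1}r\bigr)^{-1}\int_{B(p,r)}(-\bigtriangleup u)\,d\mathrm{vol}.
\]
This is precisely the spherical-means formula the paper later borrows from Besse and Beltr\'an in the proof of Lemma~\ref{lemma:meanv_subh}, where subharmonicity fails and a quantitative error term is needed. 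Your approach would let the paper handle both places with a single computation.
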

\begin{proof}
	Let $C[U]$ be a conical set in $R^{d+1}$ consisting of all rays from $0$ through $U$.
	Define
	$$
	u^{*}(x):= u\left(\frac{x}{\|x\|}\right) 
	$$
	the extension of $u$ from $U$ to the set $C[U]$.
	
	Applying the properties of the harmonic measure $\omega(p, C[B(p, r)])$ of $C[B(p, r)]$ detailed in \cite[Theorem 3.11]{HayK76}, for any $B(p, r) \subset U$ we obtain 
	$$
	u(p) = u^{*}(p)  \leq  \int_{\partial C[B(p, r)]}     u^{*}(x) d\omega(p, C[B(p, r)])(x), 
	$$
 since the point at infinity carries zero harmonic measure for such conical sets. 
	
	Since $u^{*}$ is constant along each ray, and the measure $\omega(p, C[B(p, r)])$ is axially symmetric, we can write
    $$
    \int_{\partial C[B(p, r)]}     u^{*}(x) d\omega(p, C[B(p, r)])(x) =  \frac{1}{\sigma_{d-1}(S(p,r))} \int_{S(p,r)} u(x) d\sigma_{d-1}(x),
	$$
	with $S(p,r) := \partial C[B(p, r)] \cap \mathbb{S}^{d}$, and $\sigma_{d-1}$ being surface area measure for the $d-1$ dimensional sphere $S(p,r)$.
	
	Finally, we integrate
	$$
 \int_{0}^{r_0}	\sigma_{d-1}(S(p,r)) u(p)  dr\leq  \int_{0}^{r_0}\int_{S(p,r)} u(x) d\sigma_{d-1}(x) dr
	$$
	to get (\ref{eq:meanv}).
 \end{proof}

\subsection{Proof of Theorem \ref{thm:Riesz Polarization} for $d-2 \leq s < d$, $d \geq 2$, $s>0$.} \label{thm:sec:ProofPolarizRiesz}

For each $N \in \mathbb{N}$, let $\{ x_{N,1}, ..., x_{N,N}\} = \omega_{N,d}$.

Since our sequence of point configurations, $\{\omega_{N,d}\}$, is well-separated, as defined in Section \ref{subsec:Well-separated}, we know that there is some constant $c > 0$ such that for each $N \in \mathbb{N}$
\begin{equation}
\min_{1 \leq i < j \leq N} \|x_{N,i} - x_{N,j}\| \geq \frac{c}{N^{1/d}}. 
\end{equation}

This means that
\begin{equation}
\min_{1 \leq i < j \leq N} \arccos( \langle x_{N,i},  x_{N,j} \rangle) \geq \arccos\bigg( 1 - \frac{c^2}{2 N^{2/d}} \bigg).
\end{equation}

Let $B(x, r)$ be the open geodesic ball of radius $r$ centered at $x$, and $r_N = \arccos \Big( 1 - \frac{c^2}{8 N^{2/d}}\Big)$ for each $N \in \mathbb{N}$. For $N \in \mathbb{N}$, let us define the probability measure $\mu_N$ by
\begin{equation} \label{def:measure muN}
d \mu_N(x) := \frac{1}{1 - N \sigma ( B(p, r_N))} \Big( 1 - \sum_{j=1}^{N} \mathbbm{1}_{B( x_{N,j}, r_N)}(x) \Big) d \sigma(x),
\end{equation}
where $\sigma$ is the normalized uniform measure on the sphere. Note that due to the separation, this is indeed a probability measure.
Then we have
\begin{align*}
P_{s}(\omega_{N,d}) & := \min_{x \in \mathbb{S}^d} \sum_{j=1}^{N} K_s(x, x_{N,j}) \\
 & \leq \sum_{j=1}^{N}  \int_{\mathbb{S}^d} K_s(x, x_{N,j}) d\mu_N (x) \\
 & = \frac{1}{1 - N \sigma ( B(p, r_N))} \Big( N I_s( \sigma) - N \int_{B(p, r_N)} K_s(x, p) d \sigma(x)  - 2\sum_{1 \leq i < j \leq N} \int_{B(x_{N,i}, r_N)} K_s(x, x_{N,j}) d \sigma(x) \Big).
\end{align*}
Using the subharmonicity of our kernel and the fact that $\underset{ x \in \overline{B(p, r_N)}}{\argmin} \; K_s(x, p)$ is on the boundary of the ball
we have that
\begin{align*}
P_{s}(\omega_{N,d}) & \leq  \frac{1}{1 - N \sigma ( B(p, r_N))} \Big( N I_s( \sigma) - N \int_{B(p, r_N)} K_s(x, p) d \sigma(x) - \sigma(B(p,r_N)) 2\sum_{1 \leq i < j \leq N}  K_s(x_{N,i}, x_{N,j})  \Big)\\
& \leq  \frac{1}{1 - N \sigma ( B(p, r_N))} \Big( N I_s( \sigma)  - N \sigma(B(p,r_N)) \frac{1}{s} (2 - 2 \cos(r_N))^{-s/2} - \sigma(B(p,r_N)) E_{s}( \omega_{N,d}) \Big)\\
& = \frac{1}{1 - N \sigma ( B(p, r_N))} \Big( N I_s( \sigma)  - N \sigma(B(p,r_N)) \frac{2^{s}}{s c^s} N^{s/d} - \sigma(B(p,r_N)) E_{s}( \omega_{N,d}) \Big).
\end{align*}

Theorem \ref{thm:Riesz Energy Asymptotics} yields $I_{s} N^2 - C'_{s,d} N^{1+\frac{s}{d}} \leq \mathcal{E}_s(N) \leq  E_{s}( \omega_{N,d})$, and thus the polarization bound takes the form

\begin{equation}
P_{s}(\omega_{N,d}) \leq N I_s( \sigma)   +   \frac{\sigma(B(p,r_N)) N^{1+\frac{s}{d}}}{1 - N \sigma ( B(p, r_N)) } \Big(  -   \frac{2^{s}}{s c^s} + C'_{s,d}    \Big)
\end{equation}

Then, using the fact that for some constant $A>0$, $\sigma(B(p, r_N)) = \frac{A}{N} + \mathrm{o}(N^{-1})$, and assuming that the constant $c$ is sufficiently small to ensure that the term $(- \frac{2^{s}}{s c^s} + C'_{s,d})$ is negative, we obtain for some positive $C$, and all $N \geq 2$,
\begin{equation} \label{eqt:polarization bound}
P_{s}(\omega_{N,d}) \leq N I_s( \sigma) - C N^{\frac{s}{d}}.
\end{equation}

\qed 

\begin{remark}
The same technique can be applied to configurations with separation of order $N^{-1/\alpha}$, $\alpha < d$, by taking $r_N = \arccos\Big( 1 - \frac{c^2}{8 N^{2/\alpha}} \Big)$ in (\ref{def:measure muN}). In this case, the resulting upper bound for polarization estimates would be $N I_s - C N^{1+\frac{s-d}{\alpha}}$. We suspect that for maximal polarization the optimal behavior is  $\mathcal{P}_s(N) = N I_s + O(N^{s/d})$. 
\end{remark}

\subsection{Proof of Theorem \ref{thm:Riesz Polarization} for $s=0$, $d=2$.} \label{sec:ProofPolarizLog}

The proof essentially mirrors the case $s>0$. As before, for each $N \in \mathbb{N}$, let $\{ x_{N,1}, ..., x_{N,N}\} = \omega_{N,2}$. Due to well-separation of sequence of point configurations, $\{\omega_{N,2}\}$, there is some constant $c > 0$ such that for each $N \in \mathbb{N}$
\begin{equation}
\min_{1 \leq i < j \leq N} \|x_{N,i} - x_{N,j}\| \geq \frac{c}{N^{1/2}}. 
\end{equation}

Similar to (\ref{def:measure muN}), we define a probability measure $\mu_N$, where we take $r_N = \arccos \Big( 1 - \frac{c^2}{8 N} \Big)$ for each $N \in \mathbb{N}$.

We have that
\begin{align*}
P_{{0}}(\omega_{N,2}) & := \min_{x \in \mathbb{S}^2} \sum_{j=1}^{N} (- \log \|x -x_{N,j} \| )\\
 & \leq \sum_{j=1}^{N}  \int_{\mathbb{S}^2} (- \log \|x - x_{N,j} \| ) d\mu_N (x) \\
 & = \frac{1}{1 - N \sigma ( B(p, r_N))} \Big( N \int_{\mathbb{S}^2} (- \log \|x - y \|) d\sigma(x)  - \sum_{i,j = 1}^{N} \int_{B(x_{N,i}, r_N)} (- \log \|x - x_{N,j} \|) d \sigma(x) \Big) \\
  & = \frac{1}{1 - N \sigma ( B(p, r_N))} \Big( N I_0(\sigma) - N \int_{B(p, r_N)} (- \log \|x - p \| ) d \sigma(x) \\
  & \qquad \qquad \qquad \qquad \qquad - 2\sum_{1 \leq i < j \leq N} \int_{B(x_{N,i}, r_N)} (- \log \|x - x_{N,j} \| ) d \sigma(x) \Big)\\
  & \leq \frac{1}{1 - N \sigma ( B(p, r_N))} \Big( N I_0(\sigma) - N \sigma (B(p, r_N)) (- \log \frac{c}{2 \sqrt{N}}) \\
  & \qquad \qquad \qquad \qquad \qquad - 2\sum_{1 \leq i < j \leq N} \int_{B(x_{N,i}, r_N)} (- \log  \|x - x_{N,j} \|) d \sigma(x) \Big)\\ 
\end{align*}
Recalling the formula for the area of a spherical cap, we find
\begin{equation}
\sigma ( B(p, r_N))  = \frac{1- \cos(r_N)}{2} = \frac{c^2}{16 N}.
\end{equation}
Due to subharmonicity of the logarithmic kernel, we see that
\begin{align*}
& P_{{0}}(\omega_{N,2})  \leq \frac{1}{1 - N \sigma ( B(p, r_N))} \Bigg( N I_0(\sigma) + N \sigma (B(p, r_N)) \log \frac{c}{2\sqrt{N}}   \\
  & \qquad - 2 \sigma ( B(p, r_N)) \sum_{1 \leq i < j \leq N} (- \log \|x_{N,i} - x_{N,j} \|)  \Bigg)\\
& \leq \frac{1}{1 - \frac{c^2}{16}} \Bigg( N I_0(\sigma) + \frac{c^2}{16}  \log \frac{c}{2\sqrt{N}}  -  \frac{c^2}{16 N} E_{0}(\omega_{N,2}) \Bigg)\\
\end{align*}
Thus, if we let $D(N)$ to be defined as 
$$
E_{0}(\omega_{N,2}) = I_0(\sigma) N^2 - D(N)N\log N,
$$
then we have
\begin{equation}
 P_{{0}}(\omega_{N,2})  \leq  I_{0}(\sigma) N +  \frac{c^2}{16 -c^2} \bigg( - \frac{1}{2} + D(N) \bigg),
\end{equation}
which completes the proof.
\qed

\section{Proof of Theorem \ref{thm:greedyenergy}}\label{sec:EnergyAsymptotics}

\subsection{Proof of (i), Theorem \ref{thm:greedyenergy}} \label{sec:ProofofGreedyEnergyRiesz}

For $d-2  \leq s < d$, $s > 0$, the claimed energy estimates follow immediately from (\ref{eq:Gr Energy POlarization relation}) and (\ref{eqt:polarization bound}). 
\begin{align*}
E_{{s}}(\omega_{N,d}) & = 2 \sum_{ k = 1}^{N-1} P_{s}(\omega_{k,d})\\
& \leq 2 \sum_{ k = 1}^{N-1} \Bigg( k I_s(\sigma) - C k^{\frac{s}{d}} \Bigg)\\
& = N(N-1) I_s(\sigma) - \frac{2d}{d+s} C N^{1+ \frac{s}{d}} + O(N^{\frac{s}{d}})\\
& = N^2 I_s(\sigma) - \frac{2d}{d+s} C N^{1+ \frac{s}{d}} + O(N),
\end{align*}

\qed

\subsection{Proof of (ii), Theorem \ref{thm:greedyenergy}}  \label{sec:ProofGreedyEnergyLog}

Due to the lack of information on the second-order term in estimate (\ref{eq:opt_polarization2}), this case requires a different approach. 

For each $N \geq 2$, let $D_1(N)$ and $D(N)$ be defined by
\begin{equation}
\mathcal{E}_{0}(N) = I_0(\sigma) N^2 - D_1(N) N \log N
\end{equation} 
and
\begin{equation}
E_{0}(\omega_N) = I_0(\sigma) N^2 - D(N) N \log N.
\end{equation}
From \cite[Theorem 2.1]{LopS10}, and \cite[Theorem 2.3]{Lop10}, we know that
\begin{equation}
\lim_{N \rightarrow \infty} \frac{E_{0} (\omega_{N,2})}{N^2} = I_0(\sigma).
\end{equation}
In addition, there exits a constant $C_{\log} >0$, such that
\begin{equation}\label{eq:Simple bounds on Greedy Log energy S2}
N^2 I_{0}(\sigma) - \frac{1}{2} N \log N - C_{\log} N  < \mathcal{E}_{0}(N) \leq E_{0}(\omega_N) \leq N^2 I_{0}(\sigma) - N I_{0}(\sigma),
\end{equation}
where the lower bound follows from (\ref{thm:BoundsonOptimalLogEnergy}), and the upper bound is obtained by applying the maximal polarization estimates from~\cite[Equation 14.6.4]{BHS} to (\ref{eq:Gr Energy POlarization relation}), where the constant \( T_0(\mathbb{S}^2) \) in the equation is, according to \cite[Corollary 14.6.7]{BHS}, precisely the Wiener constant \( I_0(\sigma)\). 
Therefore, we immediately see, due to \eqref{eq:Simple bounds on Greedy Log energy S2},
$$ \frac{1}{2} + \frac{C_{\log}}{\log N}  \geq D_1(N) \geq D(N) > 0.$$

 Notice that the energy for $\omega_{N,2}$
\begin{equation}
E_{0}(\omega_{N,2}) = 2 \sum_{ 1 \leq i < j \leq N} ( - \log \|x_i - x_j\| )
\end{equation}
can be written as
\begin{equation}
E_{{0}}(\omega_{N,2})= 2 \sum_{ k = 1}^{N-1} P_{0}(\omega_{k,2}).
\end{equation}

We have that
\begin{align*}
I_0(\sigma) (N+1)^2 - D(N+1) (N+1) & \log(N+1)  = E_{0}(\omega_{N+1,2})\\
& = E_{0}(\omega_{N,2})+ 2 P_{0}( \omega_{N,2}) \\
& \leq  I_0(\sigma) N^2 - D(N) N \log N \\
& \qquad + \frac{2}{1 - \frac{c^2}{16}} \Bigg( N I_0(\sigma) - \frac{c^2}{32} \log  N +  \frac{c^2}{16} \log \frac{c}{2} - \frac{c^2}{16 N} E_{0}(\omega_{N,2}) \Bigg) \\
& = I_0(\sigma) (N+1)^2  - D(N) N \log N - I_0(\sigma) \\
& \qquad + \frac{2}{1 - \frac{c^2}{16}} \Bigg( \frac{c^2}{16} D(N) \log N - \frac{c^2}{32} \log N  + O(1) \Bigg)
\end{align*}

Thus
\begin{align*}
D_1(& N+1) \geq D(N+1) \\
& \geq \frac{D(N) N \log N - \frac{2c^2}{16-c^2} D(N) \log N + \frac{c^2}{16-c^2} \log N + O(1) }{(N+1) \log(N+1)} \\
& = \frac{D(N)(N+1)\log(N+1) + D(N)\big[N \log\frac{N}{N+1} - \log(N+1) - \frac{2c^2}{16-c^2}  \log N\big] + \frac{c^2}{16-c^2} \log N + O(1)}{(N+1) \log(N+1)} \\
& = D(N) (1- \alpha(N)) + \frac{c^2}{16-c^2}  \frac{\log N}{(N+1)\log(N+1)} - \frac{O(1)}{(N+1)\log(N+1)}
\end{align*}
where $\alpha(N) \geq 0$ is $\Theta (\frac{1}{N})$.

Since we have $D(N+1) \geq D(N) + \frac{D(N)[2\log\frac{1}{2} - \frac{16+c^{2}}{16-c^{2}}\log(N+1)] + \frac{c^{2}}{16 - c^{2}} \log N + O(1)}{(N+1)\log(N+1)}$, the second term on the right becomes positive as soon as 
\begin{equation} \label{eq:DNineq}
	D(N) \leq \frac{\frac{c^{2}}{16 - c^{2}} \log N + O(1)}{-2\log\frac{1}{2} + \frac{16+c^{2}}{16-c^{2}}\log(N+1)} \rightarrow \frac{c^2}{16+c^2},
\end{equation}
Thus, if $D(N)$ becomes small enough, say smaller than $\frac{c^2}{2(16+c^2)}$ with $N$ sufficiently large, then $D(N+1) \geq D(N)$. Since we have $D(N) \leq D_{1}(N) \leq  \frac{1}{2} + \frac{C_{\log}}{\log N}$, and at each step we are adding the term of order $O(\frac{1}{N})$, the sequence might start decreasing again (but always staying larger than $\frac{c^2}{2(16+c^2)} - O(1/N)$). Thus, we can conclude that for all $N$ large enough, $\frac{c^2}{4(16+c^2)} \leq D(N) \leq \frac{1}{2} + \frac{C_{\log}}{\log N}.$

\qed

\section{Greedy Green Energy and Polarization} \label{sec:Green energy}

We begin by proving the well-separation of the Green greedy points on a manifold $\mathcal{M}$. The argument follows the same lines as in \cite[Theorem 1.1]{Del19}, where the author first proves the separation property of a minimizing configuration for the Green energy and then demonstrates that every harmonic ball $\mathcal{M}$ contains a geodesic ball of proportional radius. Harmonic balls on a manifold can be defined in two ways: either by the mean-value property for harmonic functions or via partial balayage (see \cite[Section 9]{GusR18}). In what follows, we shall use the latter definition. 

\begin{definition}
	Let $a$ be a real number with $0< a< V$, where $V = \textnormal{vol}(\mathcal{M})$ is the volume of $\mathcal{M}$, and denote by $G^{\delta_{p}}(x) = G(p,x)$ the Green's function centered at $p$. Consider the set
	$$
	K_{a} = \{u \in W^{1,2}(\mathcal{M}): u \leq a G^{\delta_{p}}\},
	$$
	where $W^{1,2}(\mathcal{M})$ denotes the Sobolev space of square integrable functions on $\mathcal{M}$ whose first weak derivatives are also square integrable. The functional 
	$$
	J_{a}(u) = \int_{\mathcal{M}} \langle \bigtriangledown u , \bigtriangledown u\rangle - 2 (1- a V^{-1}) u
	$$
	has a unique minimizer in $K_{a}$ which we denote by $u_{a}$. The \textit{harmonic ball} with center $p$ and volume $a$ is defined as a set
	$$
	B^{h} (p,a) = \{x \in \mathcal{M} : u_{a}(x) < a G^{\delta_{p}}(x)\}.
	$$
	
\end{definition}

\begin{lemma}  (\cite[Lemma 2.8]{Del19}) \label{lem:GreenLemma}
	Let $q$ be a point different from $p$. Assume that there is a function $f$, continuous in a neighborhood $U$ of $q$, such that 
	
	(1) $f(q) = a G(p, q)$,
	
	(2) $f(x) \leq a G(p, x)$ for $x \in U$,
	
	(3) $\triangle f \leq 1 - a V^{-1}$ on $U$.
	
	Then $q \notin B^{h} (p,a)$.  
	
\end{lemma}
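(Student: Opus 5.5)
The plan is to argue by contradiction, using the variational characterization of $B^{h}(p,a)$ as the non-contact set of the obstacle problem. The idea is that on a neighborhood of $q$, $f$ behaves as a subsolution lying below the obstacle $aG^{\delta_p}$, which forces the minimizer $u_a$ to touch the obstacle at $q$.

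Recall first the standard properties of the minimizer $u_a$ of $J_a$ over $K_a$. Its Euler--Lagrange inequality for the obstacle problem reads $\triangle u_a \geq 1 - aV^{-1}$ in the sense of distributions on $\mathcal{M}$ (with $\triangle = -\operatorname{div}\nabla$). This follows by testing $J_a(u_a - \varepsilon\varphi) \geq J_a(u_a)$ for $\varphi \geq 0$ smooth, which is admissible since $u_a - \varepsilon\varphi \leq u_a \leq aG^{\delta_p}$. On the open non-contact set $\{u_a < aG^{\delta_p}\}$ we have equality $\triangle u_a = 1 - aV^{-1}$, because there perturbations of both signs are admissible. I will also use that $u_a$ is continuous away from $p$; this is standard regularity for obstacle problems with a smooth obstacle and constant right-hand side. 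So $u_a$ is superharmonic-type with respect to the operator $\triangle - (1-aV^{-1})$. The sign convention matters: $\triangle u \geq c$ means $-\Delta_{\text{classical}} u \geq c$, i.e. $u$ is a supersolution.

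Suppose, for contradiction, that $q \in B^{h}(p,a)$, i.e. $u_a(q) < aG(p,q)$. By continuity, shrink $U$ to a small ball $B$ around $q$, avoiding $p$ and contained in the non-contact set, so that $\triangle u_a = 1 - aV^{-1}$ on $B$. Set $w = f - u_a$ on $B$. Then $\triangle w \leq 0$ on $B$, so $w$ is subharmonic in the classical sense and satisfies the maximum principle (the mean value inequality, as in the lemma of Gardiner stated earlier, or its local Riemannian analogue). Moreover $w(q) = aG(p,q) - u_a(q) > 0$.

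The key step is to show that $w(q)$ cannot exceed the values of $w$ nearby. For this, construct a competitor $v = \max(u_a, f - \eta)$ on $B$, extended by $u_a$ outside $B$, with a small constant $\eta > 0$ chosen so that $f - \eta < u_a$ near $\partial B$. To see that such $\eta$ exists, compare $f$ and $u_a$ on $\partial B$: I expect the cleanest route is to replace $f$ by $f - \kappa\, d(x,q)^2$. This preserves $\triangle f \leq 1 - aV^{-1} + O(\kappa)$; the $O(\kappa)$ term is handled by slightly adjusting the argument, or absorbed because $\triangle d^2 > 0$ has the helpful sign in the geometer's convention. The point of the modification is that $q$ becomes a strict maximum of $f - aG^{\delta_p}$.

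Then $v \leq aG^{\delta_p}$, so $v \in K_a$. Since $f - \eta$ is a subsolution while $u_a$ is a solution on $B$, a standard comparison (the max of a subsolution and the minimizer does not raise the energy, by the lattice/uniqueness property of obstacle-problem minimizers) gives $J_a(v) \leq J_a(u_a)$. Uniqueness of the minimizer then forces $v = u_a$, so $f - \eta \leq u_a$ on $B$. Letting $\eta \to 0$ yields $f(q) \leq u_a(q) < aG(p,q)$, contradicting hypothesis (1).

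The main obstacle is making the comparison/energy step rigorous with only continuity of $f$ and a distributional Laplacian bound. This includes checking that $v \in W^{1,2}$ and justifying $J_a(v) \leq J_a(u_a)$ via the inequality $\int \nabla(f-u_a)\cdot\nabla (f-\eta-u_a)^+ \leq 0$. I would mollify $f$ locally if needed.
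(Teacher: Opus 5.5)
\textbf{There is a genuine gap, and it cannot be closed locally.}

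\medskip
\noindent\textbf{The step that fails.} The failure is the choice of $\eta$. On $B$ you correctly get $\triangle w\le 0$ for $w=f-u_a$, so $w$ is subharmonic. By the maximum principle, $w(q)\le\max_{\partial B'}w$ for every small geodesic ball $B'$ around $q$. Hence no $\eta$ can satisfy both $f-\eta<u_a$ near $\partial B$ and $f(q)-\eta>u_a(q)$. Asking for this with $\eta\to 0$ amounts to $w\le 0$ on $\partial B$, which already forces $w(q)\le 0$. So the boundary condition you need is as strong as the conclusion, and nothing in (1)--(3) supplies it.

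Your repair $f-\kappa\, d_{\mathcal{M}}(\cdot,q)^2$ has the wrong sign. With $\triangle=-\operatorname{div}\nabla$ one has $\triangle d_{\mathcal{M}}(\cdot,q)^2=-2d+O(d_{\mathcal{M}}(\cdot,q)^2)<0$. So the modified function has Laplacian $\approx\triangle f+2d\kappa$, violates (3), and is no longer a subsolution, which is exactly what the max-comparison needs.

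Separately, your Euler--Lagrange inequality is reversed. Testing with $u_a-\varepsilon\varphi$, $\varphi\ge 0$, gives $\int\langle\nabla u_a,\nabla\varphi\rangle\le(1-aV^{-1})\int\varphi$, i.e.\ $\triangle u_a\le 1-aV^{-1}$. Thus $u_a$ is the largest subsolution below the obstacle, not a supersolution.

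\medskip
\noindent\textbf{Why no local argument can work.} With (2) and (3) imposed only on $U$, the statement as written here is false. For any $q\neq p$, take $U\not\ni p$ and $f=aG^{\delta_p}$. Then (1) and (2) hold with equality, and $\triangle f=-aV^{-1}\le 1-aV^{-1}$ on $U$. The lemma would then give $B^h(p,a)\subseteq\{p\}$, i.e.\ $u_a=aG^{\delta_p}$ a.e. This is impossible, since $G^{\delta_p}\notin W^{1,2}(\mathcal{M})$ for $d\ge 2$. (The paper itself gives no proof, only the citation to \cite{Del19}.)

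\medskip
\noindent\textbf{What the paper actually uses, and a correct proof.} The proof of Theorem~\ref{thm:sepGreenPolarization} provides a global $f$: $f\le aG^{\delta_{x_j}}$ on all of $\mathcal{M}$, and $\triangle f\le(1-aV^{-1})\textnormal{vol}$ as distributions on $\mathcal{M}$ (the terms $-\frac{V}{N}\delta_{x_i}$ only help). Under these global hypotheses your lattice idea works with no localization, no $\eta$, and no $d_{\mathcal{M}}^2$ perturbation. Put $\psi=(f-u_a)^+$, so that $u_a+\psi=\max(u_a,f)\in K_a$. (In the application $f=-\infty$ at the $x_i$, $i\ne j$, so $\psi$ vanishes near them and lies in $W^{1,2}$.) Then, since $\nabla(u_a+\psi)=\nabla f$ on $\{\psi>0\}$,
\[
J_a(u_a+\psi)-J_a(u_a)=\int\langle\nabla f,\nabla\psi\rangle+\int\langle\nabla u_a,\nabla\psi\rangle-2(1-aV^{-1})\int\psi\le 0 .
\]
The first term is controlled by (3), and the second by the correctly signed inequality $\triangle u_a\le 1-aV^{-1}$. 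Uniqueness of the minimizer forces $\psi=0$. Hence $aG(p,q)=f(q)\le u_a(q)$ by continuity near $q$, so $q\notin B^h(p,a)$.
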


\begin{theorem}\label{thm:sepGreenPolarization}
Let $\mathcal{M}$ be a compact Riemannian manifold of dimension $
	d \geq 2$, and let $\omega_{N} = \{x_{1}, ..., x_{N}\}$ be a configuration of $N \geq 2$ points in $\mathcal{M}$. If $x^* \in \mathcal{M}$ satisfies
    $$ \sum_{j=1}^{N} G(x^{*},x_j) = P_G(\omega_N)$$
then for every $j \in \{ 1, ..., N\}$,
	$$
	x^* \notin B^{h} \Big(x_{j}, \frac{V}{N}\Big),
	$$
	where $V = \vol (\mathcal{M})$ is the volume of $\M$.
\end{theorem}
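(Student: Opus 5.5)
The plan is to apply Lemma \ref{lem:GreenLemma} with $p = x_j$, $q = x^*$, $a = V/N$, and the test function
$$
f(x) := \frac{V}{N}\Big( P_G(\omega_N) - \sum_{i \neq j} G(x, x_i) \Big),
$$
defined on a small neighborhood $U$ of $x^*$ that avoids all points $x_i$. First I would dispose of the degenerate case: $x^* \neq x_j$ for every $j$, because $G(\cdot, x_j) \to +\infty$ at $x_j$ (the singularity is of type $d_{\mathcal M}^{2-d}$, or logarithmic when $d=2$), while $P_G(\omega_N)$ is finite. Consequently the minimum of the lower semicontinuous function $\sum_i G(\cdot,x_i)$ is attained away from the configuration. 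So $q = x^*$ differs from $p = x_j$, and we can take $U$ to be a small geodesic ball around $x^*$ on which every $G(\cdot, x_i)$ is smooth.

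Next I would check the three hypotheses of Lemma \ref{lem:GreenLemma}. Continuity of $f$ on $U$ is clear. For (1), evaluate at $x^*$:
$$
f(x^*) = \tfrac{V}{N}\Big(\sum_i G(x^*,x_i) - \sum_{i\ne j} G(x^*,x_i)\Big) = \tfrac{V}{N} G(x_j,x^*) = aG(p,q).
$$
For (2), minimality of $x^*$ gives $\sum_i G(x,x_i) \geq P_G(\omega_N)$ for every $x$. Rearranging yields $P_G(\omega_N) - \sum_{i\ne j} G(x,x_i) \le G(x,x_j)$, and multiplying by $a>0$ gives $f \le aG(p,\cdot)$ on $U$.

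For (3), use the distributional identity $\bigtriangleup G(x_i,\cdot) = \delta_{x_i} - V^{-1}$. On $U$ the delta masses vanish, so $\bigtriangleup G(x_i,\cdot) = -V^{-1}$ there. Hence
$$
\bigtriangleup f = \tfrac{V}{N}\cdot (N-1) V^{-1} = \tfrac{N-1}{N} = 1 - \tfrac{1}{N} = 1 - aV^{-1}.
$$
This gives (3) with equality, since $a V^{-1} = 1/N$. We also need $0 < a < V$ for the harmonic ball to be defined, which holds because $N \geq 2$. Lemma \ref{lem:GreenLemma} then gives $x^*\notin B^h(x_j, V/N)$, and since $j$ was arbitrary the theorem follows.

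The main obstacle is bookkeeping rather than depth. First, the sign convention $\bigtriangleup = -\operatorname{div}\nabla$ must be matched consistently with the way condition (3) is phrased in \cite{Del19}, so that the constant term works out to exactly $1 - aV^{-1}$ rather than its negative. Second, the choice $a = V/N$ is precisely what makes that constant match; this normalization is where I would double-check the computation against Delsarte's conventions.
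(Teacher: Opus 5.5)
Your proposal is correct and is essentially the paper's own proof. Since $h(x^*) = \frac{V}{N}P_G(\omega_N)$, your test function coincides with the paper's $f(x) = h(x^*) - \frac{V}{N}\sum_{i\ne j} G(x_i,x)$, and you verify the three hypotheses of Lemma~\ref{lem:GreenLemma} with $a = V/N$, $p = x_j$, $q = x^*$ exactly as the paper does, including the observation that $x^*$ cannot coincide with any $x_i$.
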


\begin{proof} 
Let $N \geq 2$ and $j \in \{1,2,..., N\}$, and consider the function
	\begin{align*}
	h(x) &= \frac{V}{N} \bigg( G^{\delta_{x_{j}}}(x) + \sum_{\substack{i=1 \\ i\neq j}}^{N} G(x_{i}, x) \bigg)  \\ 
	     & = \frac{V}{2N} \bigg(  E_{G} (x_{1}, ...,x_{N}, x) - 2\sum_{1 \leq i < k \leq N} G(x_i, x_k)\bigg).
	\end{align*}
	Since the point $x^*$ is a global minimum for $h$, $x^* \not\in \{x_1,..., x_{N}\}$, so if we set
	$$
	f(x) = h(x^*) - \frac{V}{N}  \sum_{\substack{i=1 \\ i\neq j}}^{N} G(x_{i}, x),
	$$
	then $f$ is continuous in a neighborhood of $x^*$ and 
	$$
	\frac{V}{N} G^{x_{j}} (x) \geq f(x)
	$$
	for every $x \in \M$, with equality if $x = x^*$. We also have 
	$$
	\triangle f = -\frac{V}{N} \sum_{\substack{i=1 \\ i\neq j}}^{N} \triangle G^{\delta_{x_i}} \leq 
	\frac{N-1}{N} \vol = \bigg( 1- \frac{1}{N} \bigg) \vol.
	$$
	The proof is complete by taking $a = \frac{V}{N}$, $p = x_{j}$, and $q = x^*$ in Lemma \ref{lem:GreenLemma}.
	 
\end{proof}

Combining \cite[Theorem 1.4]{Del19}, which shows that every harmonic ball contains a geodesic ball of proportional radius, with Theorem \ref{thm:sepGreenPolarization}, and denoting by $d_{\mathcal{M}}(x,y)$ the geodesic distance between $x,y \in \mathcal{M}$, we immediately obtain the following corollary:

\begin{corollary}\label{cor:sepGreenPolarization}
Let $\mathcal{M}$ be a compact connected Riemannian manifold of dimension $d \geq 2$, and let $\omega_{N} = \{x_{1}, ..., x_{N}\}$ be a configuration of $N \geq 2$ points in $\mathcal{M}$. There is some constant $c>0$, independent of $N$ such that if $x^* \in \mathcal{M}$ satisfies
    $$ \sum_{j=1}^{N} G(x^*,x_j) = P_G(\omega_N),$$
then
	$$ d_{\mathcal{M}}(x_j, x^*) \geq c N^{-1/d}.
	$$
for every $j \in \{ 1, ..., N\}$.
\end{corollary}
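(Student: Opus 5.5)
The plan is to combine Theorem \ref{thm:sepGreenPolarization} with the containment result \cite[Theorem 1.4]{Del19}, and then to handle the regime of small $N$, where the harmonic ball is not small, by using compactness.

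\emph{Step 1.} Fix $N \geq 2$ and a minimizer $x^*$ of $\sum_{j} G(\cdot,x_j)$. Theorem \ref{thm:sepGreenPolarization} gives $x^* \notin B^h(x_j, V/N)$ for every $j$. Note that Theorem \ref{thm:sepGreenPolarization} places no separation hypothesis on $\omega_N$; it only uses that $x^*$ is a global minimizer of the potential. The resulting bound therefore holds for arbitrary configurations, with a constant independent of $\omega_N$, as the statement requires.

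\emph{Step 2.} Invoke \cite[Theorem 1.4]{Del19}. On a compact connected $\mathcal{M}$ there are constants $c_0>0$ and $a_0\in(0,V)$ such that $B(p, c_0 a^{1/d}) \subset B^h(p,a)$ for all $p\in\mathcal{M}$ and all $0<a\le a_0$, where $B(p,r)$ denotes the geodesic ball. The uniformity in $p$ comes from compactness: the constants depend only on curvature bounds, the injectivity radius and $V$. Applying this with $a=V/N$ for $N \ge V/a_0$ shows that $x^*$ lies outside $B(x_j, c_0 (V/N)^{1/d})$, that is, $d_{\mathcal{M}}(x_j,x^*) \ge c_0 V^{1/d} N^{-1/d}$.

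\emph{Step 3.} This step handles the finitely many $N$ with $2\le N< V/a_0$. For these $N$ I would still apply \cite[Theorem 1.4]{Del19}, now with $a = V/N$ allowed to range over $[a_0, V/2]$. If the cited theorem is stated for all $a<V$, it gives a radius $\rho(a)>0$ depending continuously, or at least positively and uniformly, on $a$ in a compact subinterval. Taking $c$ small enough that $c N^{-1/d}\le \min_{a\in[a_0,V/2]}\rho(a)$ for those $N$ then finishes the argument.

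The main obstacle is confirming that \cite[Theorem 1.4]{Del19} provides a radius that is uniform in both the center $p$ and the full range of volumes $a\in(0,V/2]$, not only asymptotically as $a\to 0$. If only small $a$ is covered, I would fall back on a direct argument for bounded $N$. As $x\to x_j$ we have $G(x,x_j)\to+\infty$, while the other $N-1$ terms are bounded below on compact $\mathcal{M}$ and the minimum value $P_G(\omega_N)$ is at most $\int \sum_j G(\cdot,x_j)\,d\vol/V = 0$. Hence $G(x^*,x_j)\le (N-1)\max(-\min G)$. The local singularity asymptotics of $G$ then force $d_{\mathcal{M}}(x^*,x_j)\ge \rho_N>0$, and this bound is uniform over configurations because it depends only on $N$.
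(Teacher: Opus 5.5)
Your proposal is correct and follows the paper's argument: combine Theorem \ref{thm:sepGreenPolarization}, which puts $x^*$ outside $B^h(x_j, V/N)$, with \cite[Theorem 1.4]{Del19}, which places a geodesic ball of radius comparable to $(V/N)^{1/d}$ inside each harmonic ball. Your Step 3 is extra care that the paper omits, since it calls the corollary immediate. The fallback there is a valid way to cover the finitely many small $N$ if the containment is only available for small volumes: it uses $P_G(\omega_N)\le 0$, the lower bound on $G$, and the blow-up of $G$ on the diagonal.
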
 

We can apply these results to greedily generated sequences.


\textit{Proof of Theorem \ref{thm:Separation bound for Greedy green points}}: Follows from Corollary \ref{cor:sepGreenPolarization}.

 \subsection{Green Polarization}\label{sec:GreenPolarization}

We define 
\begin{equation}
	I_G(\mu) : =  \int_{\mathbb{S}^d} \int_{\mathbb{S}^d} G(x, y) d\mu(x) d\mu(y).
\end{equation}

\begin{proof}[Proof of Theorem \ref{thm:GreenPolarizationAsymptotics}]

For each $N \in \mathbb{N}$, let $\{ x_{N,1}, ..., x_{N,N}\} = \omega_{N,d}$, and $x_{N, N+1} \in \mathbb{S}^d$ such that
\begin{equation}
\sum_{k=1}^{N} G( x_{N,k}, x_{N,N+1}) = P_G(\omega_{N,d}).
\end{equation}

Since our sequence of point configurations, $\{\omega_{N,d}\}$, is well-separated, as defined in Section \ref{subsec:Well-separated}, and due to Corollary \ref{cor:sepGreenPolarization}, we know that there is some constant $c > 0$ such that for each $N \in \mathbb{N}$
\begin{equation}
\min_{1 \leq i < j \leq N+1} \arccos( \langle x_{N,i},  x_{N,j} \rangle) \geq \arccos \bigg( 1 - \frac{c^2}{2 N^{2/d}} \bigg).
\end{equation}

As in (\ref{def:measure muN}), we define a probability measure $\mu_N$ with $r_N = \arccos \Big( 1 - \frac{c^2}{8 N^{2/d}} \Big) = 2 \arcsin\Big(\frac{c}{4 N^{1/d}} \Big)$, for each $N \in \mathbb{N}$. Then, using (\ref{eq:GreenExp}), (\ref{eq:GreenExp1}), and (\ref{eq:Green_at_opp_pts}), we obtain
\begin{align*}
	P_G( &\omega_N )  := \min_{x \in \mathbb{S}^d} \sum_{j=1}^{N} G(x, x_{N,j}) \\
	& \leq \sum_{j=1}^{N}  \int_{\mathbb{S}^d} G(x, x_{N,j}) d\mu_N (x) \\
	& = \frac{1}{1 - N \sigma ( B(p, r_N))} \Big( N I_G( \sigma) - N \int_{B(p, r_N)} G(x, p) d \sigma(x)  - 2\sum_{1 \leq i < j \leq N} \int_{B(x_{N,i}, r_N)} G(x, x_{N,j}) d \sigma(x) \Big)\\
	& = \frac{1}{1 - N \sigma ( B(p, r_N))} \Bigg( N I_G( \sigma) + N \sigma ( B(p,  r_N)) \frac{B_{\cos^2 \frac{r_N}{2}}(\frac{d}{2},\frac{d}{2})}{B_{\sin^2 \frac{r_N}{2}}(\frac{d}{2},\frac{d}{2})} \big(-K(\pi) + K (\pi-r_N)\big) \\
	&  \qquad - 2\sigma ( B(p,  r_N)) \sum_{1 \leq i < j \leq N} \big( G(x_{N,i},x_{N,j}) + K(r_N)\big) \Bigg),
\end{align*}
where $B_x(\alpha,\beta)$ is the incomplete beta function defined in (\ref{eqt:incomplete beta}). It is easy to see from (\ref{eqt:K2}) that the second term in the estimate is negative for sufficiently large $N$, so we replace it below with 0. By applying (\ref{eqt:K1}) to $K(r_N)$, and writing 
\begin{equation}\label{eq:D(N) for Green energy}
	E_G(\omega_{N,d}) =  - D(N) N^{2 - 2/d},
\end{equation}
we obtain 
 
\begin{align*}
P_G(\omega_N) & \leq \frac{\sigma ( B(p,  r_N))}{1 - N \sigma ( B(p, r_N))} \Big(  - N(N-1) \bigg(\frac{r_{N}^2}{(2d+4)V_d} + o(r_{N}^2) \bigg) - E_G(\omega_{N,d}) \Big)\\
& \leq \frac{\sigma ( B(p,  r_N))}{1 - N \sigma ( B(p, r_N))} \Big(  - N(N-1) \bigg(\frac{ c^2 N^{-2/d}}{4(2d+4)V_d} + o(N^{-2/d}) \bigg) + D(N) N^{2 - 2/d} \Big).
\end{align*}

Our claim now follows from the fact that $\sigma ( B(p,  r_N)) = \frac{A}{N}+ o(N^{-1})$ for some constant $A > 0$.
\end{proof}
 
\subsection{Asymptotics for Greedy Energy}\label{sec:Asymptotics for Green energy Greedy}

Let $\{ \omega_{N} \}$ be a sequence of greedily generated point configurations. As before, for each $N \geq 2$, we define $D(N)$ as in \eqref{eq:D(N) for Green energy} and $D_1(N)$ by
\begin{equation}
\mathcal{E}_G(N) = -  D_1(N) N^{2 - 2/d},
\end{equation} 
  so that $D(N) \leq D_{1}(N)$.
  
 Then 
 \begin{align*}
   -D(N+1) (N+1)^{2 - 2/d} &= E_G(\omega_{N+1}) \\
 & = E_G(\omega_{N}) + 2 	P_G(\omega_N)\\
 &\leq - D(N) N^{2 - 2/d} \\
 &+ \frac{2 \sigma ( B(p,  r_N))}{1 - N \sigma ( B(p, r_N))} \Big(    - N(N-1) \bigg(\frac{r_{N}^2}{(2d+4)V_d} + o(r_{N}^2) \bigg)  + D(N) N^{2 - 2/d} \Big)
 \end{align*}

 Finally, 
 \begin{align*}
 D(N+1) &\geq D(N) + \frac{D(N) [N^{2 - 2/d} -(N+1)^{2 - 2/d}  - \frac{2 \sigma ( B(p,  r_N)) N^{2 - 2/d}}{1 - N \sigma ( B(p, r_N))} ] }{(N+1)^{2 - 2/d}}\\
        & \qquad + \frac{ \frac{2 \sigma ( B(p,  r_N))}{1 - N \sigma ( B(p, r_N))} \Big[   N(N-1) \bigg(\frac{r_{N}^2}{(2d+4)V_d} + o(r_{N}^2) \bigg) \Big]}{(N+1)^{2 - 2/d}}
 \end{align*}

 The sum of the second and third terms is positive as soon as 
 
\begin{align*}
 D(N) \leq \frac{\frac{2 \sigma ( B(p,  r_N))}{1 - N \sigma ( B(p, r_N))} \Big[ N(N-1) \bigg(\frac{c^2 N^{-2/d}}{4(2d+4)V_d} + o(N^{-2/d}) \bigg) \Big]}{[(N+1)^{2 - 2/d} - N^{2 - 2/d}  + \frac{2 \sigma ( B(p,  r_N)) N^{2 - 2/d}}{1 - N \sigma ( B(p, r_N))} ]} \rightarrow  \frac{A c^2}{2(2d+4)V_d ((2 - 2/d)(1-A) + 2A)},
\end{align*} 
where $A$ is a constant for which $\sigma(B(p, r_N)) = \frac{A}{N} + \mathrm{o}(N^{-1})$  .
Thus, we conclude that the sequence $D(N)$ is bounded away from 0 for all $N$ sufficiently large.

\section{Separation Properties of Greedy Configurations for Riesz Kernels} \label{sec:separation}

Here we show bounds on separation for greedily generated point sets, by determining bounds on the distance between a point configuration, and the minimum of the potential they generate. In order to establish well-separation of the greedy configurations for Riesz $s$-kernel with $d-2 \leq s < d$, $d \geq 2$, we make use of the techniques developed in \cite{BraDS14, RakSZ} for separation of energy minimizing point configurations, in Section \ref{sec:Subharmonic separation}. For $s < d-2$, it is unknown if optimal point configurations for the energy are well-separated, and likewise, it is unclear if greedily generated sequences would be. We are able to provide a bound on separation of such point sequences in Section \ref{sec:Separation for s < d-2}, following a similar argument as that in \cite{DamM05}. 

\subsection{Separation for $d-2 \leq s < d$}\label{sec:Subharmonic separation}           
           
Let $\mathcal{P}(\S^{d})$ be a set of probability measures on $\S^d$. An \textit{external field} is defined as a lower semi-continuous function $Q: \S^d \ra (-\infty, \infty]$ with $Q(x) < \infty$ on a set of positive Lebesgue surface measure. The \textit{weighted energy associated with} $Q(x)$ is then given by 
\begin{equation}\label{Wenergy}
I(\mu) = I_{Q,s}(\mu) := \iint_{\S^d \times \S^d} (K_{s}(x-y) + Q(x) + Q(y)) d\mu(x) d\mu(y), \;\;\; \mu \in \mathcal{P}(\S^d).
\end{equation}
We note that semi-continuity of $Q$ implies the existence of a constant $c$ such that $Q(x) \geq c$ for all $x \in \S^d$. Therefore, the integrand in (\ref{Wenergy}) is bounded below and the double integral is well defined but possibly infinite.  

Following \cite{BraDS14}, we denote the Riesz $s$-potential of a measure $\mu \in \mathcal{P}(\S^d)$ by 
$$
U_{s}^{\mu}(x) :=\int K_s(x,y) d\mu(y),  \; x \in \R^{d+1},
$$
and the \textit{weighted s-potential} of the normalized counting measure $\mu_{\omega_N} = \frac{1}{N} \sum_{j=1}^{N} \delta_{x_{j}}$, associated with $N$-point configuration $\omega_N$, by

\begin{equation}\label{h-Def}
h_{\omega_N}(\mathbf{x}):=U_s^{\mu_{ \omega_N}}(\mathbf{x})+Q(\mathbf{x})=\frac{1}{N} \sum_{j=1}^N \frac{1}{s \|\mathbf{x}-\mathbf{x}_j\|^s}+Q(\mathbf{x}), \quad \mathbf{x} \in \mathbb{S}^d.
\end{equation}

The condition $-2 <s < d$ ensures the existence of the unique global minimizer $\mu_{Q,s} \in \mathcal{P}(\S^{d})$ of $I$, called the \textit{$s$-equilibrium measure associated with $Q$}:
$$
I(\mu_{Q,s}) = \underset{\mu \in \mathcal{P}(\S^{d})}{\min} I(\mu)
$$
The measure $\mu_{Q,s}$ is characterized by the variational Frostman conditions: there exists a constant $F_{Q}$ such that the modified potential 
\begin{equation} \label{Frostman}
\begin{split}
U_{s}^{\mu_{Q,s}} + Q  &\geq F_{Q},  \;\;\; \textnormal{q.e. on} \;\; \S^d\\
U_{s}^{\mu_{Q,s}} + Q  &\leq F_{Q},  \;\;\; \textnormal{on} \;\; \supp(\mu_{Q,s}),\\
\end{split}
\end{equation}
where q.e. is ``quasi–everywhere'' that denotes the property holding except on a set for which every probability measure supported on it has infinite energy.

\begin{theorem}(J. Brauchart, P. Dragnev, E. Saff, \cite[Theorem 12]{BraDS14})\label{BDS thm}
	
	Let $d \geq 2$, $d-2 \leq s < d$, $s>0$, and $\omega_{n} \subset \S^d$ be a set of $n$ points. Suppose that the associated weighted potential satisfies the inequality
	\begin{equation}\label{LBound:M}
	h_{\omega_n}(x) \geq M  \;\;\; \textnormal{q.e. on} \; \supp (\mu_{Q, s}),
	\end{equation}
	for some constant $M$. Then for all $x\in \S^d$ we have 
	\begin{equation}\label{BoundOnSphere}
	U_{s}^{\mu_{\omega_n}}(x) \geq M + U_{s}^{\mu_{Q,s}}(x) - F_{Q}.
	\end{equation}
	Furthermore,
	$$
		h_{\omega_n}(x) \geq M  \;\;\; \textnormal{q.e. on} \; \S^d.
	$$
\end{theorem}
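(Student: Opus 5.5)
The plan is to prove the statement as a domination principle for superharmonic (in the sense of $\bigtriangleup \le 0$ on $\S^d$ minus the support) Riesz potentials. Once the first inequality holds for all $x$, the second follows almost immediately.

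Set
\[
v(x) := U_s^{\mu_{\omega_n}}(x) - U_s^{\mu_{Q,s}}(x) + F_Q - M .
\]
The goal is to show $v \ge 0$ on $\S^d$. The first step is to establish $v \ge 0$ q.e. on $\supp(\mu_{Q,s})$. This comes from subtracting the second Frostman inequality in \eqref{Frostman}, namely $U_s^{\mu_{Q,s}} + Q \le F_Q$ on $\supp(\mu_{Q,s})$, from the hypothesis \eqref{LBound:M}, namely $U_s^{\mu_{\omega_n}} + Q \ge M$ q.e. there. The external field $Q$ cancels, which is exactly why the conclusion is field-free.

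The second step handles the complement $\Omega := \S^d \setminus \supp(\mu_{Q,s})$. On $\Omega$, $U_s^{\mu_{Q,s}}$ is smooth. By \eqref{eq:Laplace Beltrami of Riesz on Sphere}, the kernel satisfies $\bigtriangleup_x K_s(x,y) \le 0$ off $y$ when $s \ge d-2$. So $U_s^{\mu_{Q,s}}$ is subharmonic in the paper's convention (the mean value inequality \eqref{eq:meanv} holds, via Gardiner's lemma), and $-U_s^{\mu_{Q,s}}$ satisfies the reverse inequality.

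The potential $U_s^{\mu_{\omega_n}}$ is a finite sum of kernels. It is $+\infty$ at the points $x_j$, hence lower semicontinuous, and it also satisfies the mean value inequality away from the $x_j$. The points $x_j$ themselves cause no trouble for a minimum principle, because there $v = +\infty$.

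The structural point is that the mean value inequality \eqref{eq:meanv} says $u(p) \le$ ball average. Functions of that type obey a minimum principle only if the inequality goes the other way. So I need to check signs carefully. Since $\bigtriangleup = -\operatorname{div}\nabla$ and $\bigtriangleup K_s \le 0$, the kernel is superharmonic-like in the analyst's sense of ``average $\ge$ center''. Such functions attain their minimum on the boundary, which is what I want for $U_s^{\mu_{\omega_n}}$.

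For $-U_s^{\mu_{Q,s}}$ on $\Omega$ the sign is the bad one. The fix is that $\mu_{Q,s}$ charges only the support, so on $\Omega$ the potential $U_s^{\mu_{Q,s}}$ is genuinely harmonic up to a controllable term. More precisely, I expect to need the standard device from \cite{BraDS14}: replace $\S^d$ by $\R^{d+1}$. For $s = d-1$ the kernel is harmonic in $\R^{d+1}$. For $d-2 \le s < d$ one uses the Riesz $s$-kernel's $\alpha$-superharmonicity, or the Kelvin/stereographic-type argument, together with the domination principle for Riesz potentials. Concretely: if $U^{\mu} \ge U^{\nu} + c$ $\mu$-a.e. (here on $\supp \mu_{Q,s}$, with $\nu = \mu_{Q,s}$ of finite energy), then it holds everywhere. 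I would invoke the classical domination principle for Riesz kernels with $s \ge d-1$ (Landkof), and for $d-2 \le s < d-1$ adapt it using that $K_s$ restricted to the sphere has nonpositive Laplace–Beltrami. The constant $F_Q - M$ must be handled; since the sphere has no boundary at infinity, nonnegative constants are superharmonic and the principle survives. This step, getting the domination principle in the range $d-2 \le s < d-1$ on the sphere, is the main obstacle. I would attempt it via the maximum principle applied to $-v$ on each component of $\Omega$ with the mean value inequality \eqref{eq:meanv}, using that $\partial\Omega \subset \supp \mu_{Q,s}$ and that the polar exceptional set is removable for bounded-below superharmonic functions.

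Once \eqref{BoundOnSphere} holds everywhere, add $Q$ to both sides. The first Frostman inequality gives $U_s^{\mu_{Q,s}} + Q \ge F_Q$ q.e. on $\S^d$, so $h_{\omega_n} \ge M + (U_s^{\mu_{Q,s}} + Q - F_Q) \ge M$ q.e. on $\S^d$, which is the final claim.
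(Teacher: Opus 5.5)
The paper gives no proof of this statement; it imports it from \cite{BraDS14}. Your outer steps are correct. Subtracting the second inequality of \eqref{Frostman} from \eqref{LBound:M} gives $v\ge 0$ q.e.\ on $\supp(\mu_{Q,s})$. Once \eqref{BoundOnSphere} holds everywhere, adding $Q$ and using the first Frostman inequality gives $h_{\omega_n}\ge M$ q.e.

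\textbf{The domination step is missing, and the route you sketch fails.} This middle step is the entire content of the theorem. By \eqref{eq:Laplace Beltrami of Riesz on Sphere} and Gardiner's lemma, for $s\ge d-2$ every potential $U_s^{\mu}$ of a positive measure satisfies the \emph{sub}-mean-value inequality \eqref{eq:meanv} off $\supp\mu$. Functions with ``value $\le$ average'' obey a maximum principle, not a minimum principle; you have this reversed. So in $v=U_s^{\mu_{\omega_n}}-U_s^{\mu_{Q,s}}+F_Q-M$ the harmless term on $\Omega=\S^d\setminus\supp(\mu_{Q,s})$ is $-U_s^{\mu_{Q,s}}$. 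The obstructing term is $U_s^{\mu_{\omega_n}}$, which is strictly subharmonic on $\Omega\setminus\omega_n$. Thus $v$ is a difference of two subharmonic functions there. Neither a minimum principle for $v$ nor a maximum principle for $-v$ on components of $\Omega$ follows from \eqref{eq:meanv}.

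Local subharmonicity of $K_s$ off the diagonal is not what drives domination. You need a genuinely nonlocal domination principle on $\S^d$ for the whole range $d-2\le s<d$. Landkof's theorem in $\R^{d+1}$ covers only $s\ge d-1$ (order $d+1-s\le 2$). The standard fix is stereographic projection $\pi$ from a point $p\notin\omega_n$ with $U_s^{\mu_{Q,s}}(p)<\infty$. With $u=\pi(x)$ and $w=\pi(y)$ one has $\|x-y\|^{-s}=|u-w|^{-s}\rho(u)\rho(w)$, where $\rho(u)=\big(\tfrac{1+|u|^2}{2}\big)^{s/2}$. Hence:
\begin{itemize}
\item $U_s^{\mu}(x)=\rho(u)\,U^{\rho\,\pi_\#\mu}_{\R^d}(u)$, and energies are preserved;
\item a constant $c\ge 0$ on $\S^d$ equals $\rho(u)$ times the $\R^d$-potential of $\tfrac{c}{I_s}\rho\,\pi_\#\sigma$.
\end{itemize}
On $\R^d$ the $s$-kernel has order $d-s\in(0,2]$, so Landkof's domination principle applies; this is exactly where $s\ge d-2$ enters. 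For $s=d-2$ a local proof does exist, but it uses the conformal Laplacian $-\operatorname{div}\nabla+\tfrac{d(d-2)}{4}$, whose Green function is a multiple of $K_{d-2}$, not \eqref{eq:meanv}.

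\textbf{The sign of the constant is not handled.} Absorbing $F_Q-M$ as the potential of $\tfrac{F_Q-M}{I_s}\sigma$ on the dominating side requires $F_Q\ge M$, and nothing in the hypotheses gives this. It cannot be skipped either. Integrating \eqref{BoundOnSphere} against $\sigma$, using $U_s^\sigma\equiv I_s$ and that both measures are probability measures, shows the theorem forces $M\le F_Q$. If $M>F_Q$, the constant would sit on the dominated side, and domination would need the inequality on $\supp(\mu_{Q,s}+\lambda\sigma)=\S^d$.

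So you must first prove $M\le F_Q$; this is where equality of the total masses is used. One way:
\begin{itemize}
\item Let $\eta$ be the $s$-equilibrium measure of $\supp(\mu_{Q,s})$ with constant $W$.
\item Since $\eta$ and $\mu_{Q,s}$ have finite energy, they do not charge polar sets.
\item Integrating $U_s^{\mu_{\omega_n}}-U_s^{\mu_{Q,s}}\ge M-F_Q$ (valid q.e.\ on $\supp(\mu_{Q,s})$) against $\eta$ gives $M-F_Q\le\int U_s^{\eta}\,d\mu_{\omega_n}-\int U_s^{\eta}\,d\mu_{Q,s}\le W-W=0$.
\item Here $U_s^\eta=W$ q.e.\ on $\supp(\mu_{Q,s})$, and $U_s^\eta\le W$ on all of $\S^d$; the latter is itself the domination principle applied to $\eta$ against $\tfrac{W}{I_s}\sigma$.
\end{itemize}
With $M\le F_Q$ and the projected domination principle in hand, your first and last steps complete the proof.
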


Denote the {\it essential support} of $\mu_{Q,s}$ by 
$$
S_{Q}^{*} :=\{x \in \S^{d}: U_{s}^{\mu_{Q,s}}(x) + Q(x) \leq F_{Q}\} 
$$

\begin{theorem} \label{thm:pol_separation}
Let $d-2 \leq s < d$, $s >0$, and $\omega_{N} \subset \S^{d}$ be a set of $N$ points, $N>2$. Then the point $x^{*}$, such that
$$
P_{s}(\omega_{N}) = \sum_{j=1}^{N} K_{s}(x^{*}, x_{j}),   
$$
satisfies 
$$
\textnormal{dist} \;(x^{*}, x_{j}) \geq \frac{c}{N^{1/d}},\;\; j=1,2,...,N,
$$
with some constant $c>0$, independent of $N$, and our choice of $\omega_N$.
\end{theorem}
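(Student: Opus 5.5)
Fix $j$ and isolate the contribution of $x_j$ by treating the remaining points as generating an external field. Following \cite{BraDS14, RakSZ}, set $\omega' = \omega_N\setminus\{x_j\}$ and define
\[
Q(x) := \frac{1}{N}\sum_{i\neq j} K_s(x,x_i) + \frac{1}{N}\,\text{(correction)},
\]
or, more simply, rescale so that $h(x) := \sum_{i=1}^N K_s(x,x_i)$ is viewed as the potential of the single point $x_j$ plus the external field $Q_j(x)=\sum_{i\neq j}K_s(x,x_i)$. Since $x^*$ minimizes $h$, we have $h(x)\ge h(x^*)=P_s(\omega_N)$ for all $x$. Comparing, for an arbitrary test point $x$, $h(x^*)$ with the average of $h$ over a small ball looks circular, so we use Theorem \ref{BDS thm} instead.

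\textbf{Choice of field and level.} Take $Q = \frac{1}{N}\sum_{i\ne j} K_s(\cdot, x_i)$ multiplied appropriately, and the one-point configuration $\{x_j\}$ with weight $1/N$. Then apply Theorem \ref{BDS thm} to the whole normalized configuration $\omega_N$ with zero external field: the equilibrium measure is $\sigma$, $F_0=I_s$, and $\supp\sigma=\S^d$. Since $h_{\omega_N}(x)\ge M:=P_s(\omega_N)/N$ everywhere, the inequality holds trivially and gives nothing. So the real argument must be local: compare $P_s(\omega_N)$ with an upper bound and use the superharmonic structure near $x_j$.

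\textbf{Concrete local plan.} First, the upper bound $P_s(\omega_N)\le \mathcal P_s(N)\le I_s N + C N^{s/d}$. Alternatively, and without a separation hypothesis, average $h$ against $\sigma$ to get $P_s(\omega_N)\le I_sN$. Second, consider $g(x):=\sum_{i\ne j}K_s(x,x_i)$, which is superharmonic on $\S^d\setminus\omega'$ for $s\ge d-2$ (by \eqref{eq:Laplace Beltrami of Riesz on Sphere} the Laplacian is $\le0$). Form the weighted problem on $\S^d$ with field $Q=g/N$ and let $\mu_Q$ be its equilibrium measure of mass corresponding to one point, i.e.\ $\mu_{Q,s}$ for the kernel scaled by $1/N$.

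Third, apply Theorem \ref{BDS thm} to the single-point configuration $\{x_j\}$ with $M=P_s(\omega_N)/N$. Its hypothesis holds because $h_{\{x_j\}}=(K_s(\cdot,x_j)+g)/N\ge P_s/N$ everywhere. This yields
\[
\frac1N K_s(x,x_j)\ge \frac{P_s(\omega_N)}{N}+U^{\mu_Q}(x)-F_Q \quad\text{for all } x .
\]
At $x=x^*$ equality holds on the left side combined with $g$, which bounds $U^{\mu_Q}(x^*)$ and $g(x^*)$ in terms of $F_Q$. Finally, use the balayage/Frostman structure: the set where $h$ is small (in particular $x^*$) must lie in $S_Q^*$, whereas a neighborhood of $x_j$ of radius $cN^{-1/d}$ is excluded from $S_Q^*$. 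The latter holds because there $K_s(\cdot,x_j)/N$ exceeds the level, and the size of that excluded region is controlled by comparing with the equilibrium of a spherical-cap complement (the signed equilibrium computations of \cite{BraDS14}, where the hole around a point charge of mass $1/N$ has radius $\asymp N^{-1/d}$).

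\textbf{Main obstacle.} The delicate step is this last one: quantitatively showing that $x^*$ cannot lie within $cN^{-1/d}$ of $x_j$. My first attempt would mimic \cite[proof of separation]{BraDS14}. Assume $\|x^*-x_j\|<cN^{-1/d}$ and compare $h(x^*)$ with the mean of $h$ over a sphere $S(x_j,\rho)$, $\rho=2cN^{-1/d}$ (chosen via the superharmonic mean value Lemma of Gardiner). On that sphere $g$ has average $\le$ its value at $x_j$'s vicinity by superharmonicity, while $K_s(\cdot,x_j)$ is strictly smaller on the sphere than at $x^*$ by a margin $\asymp N^{s/d}$. One has to check that the superharmonic comparison for $g$ goes in the right direction. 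Since $g$ is superharmonic, $g(x^*)\ge$ its spherical average only in the reverse sense, so one uses minimality of $x^*$ for $h$ together with the mean inequality to bound $g$ near $x^*$ from below. That yields a point on the sphere where $h<h(x^*)$, a contradiction, provided $c$ is small, independent of $N$ and $\omega_N$.
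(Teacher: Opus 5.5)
\textbf{There is a genuine gap, and it is in your ``Third'' step: you have the roles of field and configuration reversed.}

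You take the configuration to be $\{x_j\}$ and let the external field be generated by $\omega'=\omega_N\setminus\{x_j\}$. There is also a normalization slip. With $n=1$, the normalized counting measure in \eqref{h-Def} is $\delta_{x_j}$ with unit mass, so $h_{\{x_j\}}=K_s(\cdot,x_j)+Q$, not $(K_s(\cdot,x_j)+g)/N$. To make $h$ proportional to the total potential you would need $Q=g$, a field of total charge $N-1$ acting on a probability measure.

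Either way, evaluating \eqref{BoundOnSphere} at $x=x^*$ only gives $x^*\in S_Q^*$ for a field whose sources are the points of $\omega'$. Nothing in $U_s^{\mu_{Q,s}}+Q$ is singular at $x_j$, so nothing forces $S_Q^*$ to omit a neighborhood of $x_j$. Your justification, ``there $K_s(\cdot,x_j)/N$ exceeds the level,'' refers to a term that does not appear in the definition of $S_Q^*$ at all. Moreover, the hole computation of \cite{BraDS14} you invoke (a cap of radius $\asymp N^{-1/d}$) is for a single point charge. It does not apply to the field of an arbitrary $(N-1)$-point configuration, and no such description is available for that case.

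\textbf{The fix is to swap the roles, which is exactly what the paper does.}
\begin{enumerate}
\item Take $E=\omega_N\setminus\{x_j\}$, with the normalized counting measure of $N-1$ points.
\item Take $Q=K_s(\cdot,x_j)/(N-1)$.
\item Then $h_E=\frac{1}{N-1}\sum_{i=1}^N K_s(\cdot,x_i)$ is globally minimized at $x^*$. So the hypothesis of Theorem~\ref{BDS thm} holds with $M=h_E(x^*)$.
\item Evaluating \eqref{BoundOnSphere} at $x=x^*$ gives $U_s^{\mu_{Q,s}}(x^*)+Q(x^*)\le F_Q$, that is, $x^*\in S_Q^*$.
\item Now $Q$ is a single point charge of size $1/(N-1)$ at $x_j$. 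By \cite[Propositions 14, 15]{BraDS14}, $S_Q^*$ lies outside a cap of radius $cN^{-1/d}$ about $x_j$.
\end{enumerate}

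\textbf{Your fallback local argument does not rescue the proposal.} With the paper's sign convention $\bigtriangleup=-\operatorname{div}\nabla$, the inequality $\bigtriangleup_x K_s\le 0$ means $K_s(\cdot,y)$ is \emph{sub}harmonic on $\S^d\setminus\{y\}$ for $s\ge d-2$. So $g$ is subharmonic, not superharmonic. Consequently:
\begin{itemize}
\item There is no minimum principle giving a point $y$ on $S(x_j,\rho)$ with $g(y)\le g(x^*)$, and the mean-value inequality points the wrong way for your purpose.
\item The ball may contain other points of $\omega'$ anyway, since $\omega_N$ is arbitrary.
\end{itemize}
Finally, the elementary bound $P_s(\omega_N)\le I_sN$, combined with $g\ge (N-1)2^{-s}/s$, by itself only gives $\|x^*-x_j\|\gtrsim N^{-1/s}$. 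Since $s<d$, this is weaker than $N^{-1/d}$, so the potential-theoretic step through $S_Q^*$ is genuinely needed.
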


\begin{remark}
For $d=2$, $s=0$, the analogous result follows directly from Theorem 1 in \cite{RakSZ}. 
\end{remark}

\begin{proof}
First, observe that for each $i=1,2,3,...,N$ the point $x^{*}$ is a global minimum over $\S^{d}$ of the function $h_{E}$ given by (\ref{h-Def}) with $E=E_{i}=\omega_{N} \setminus \{{x_{i}}\}$ and $Q = Q_{i} = K_{s}(x, x_{i})/(N-1)$. Then, by taking $M=h_{E}(x^{*}) = U^{\mu_{E}}_{s}(x^{*}) + Q(x^{*})$, it follows from Theorem \ref{BDS thm} that
$$
U_{s}^{\mu_{E}}(x) \geq  U^{\mu_{E}}_{s}(x^{*}) + Q(x^{*}) +  U_{s}^{\mu_{Q,s}}(x) - F_{Q}, \;\; x \in \S^d.
$$
By taking $x=x^{*}$, we obtain $Q(x^{*}) +  U_{s}^{\mu_{Q,s}}(x^{*})  \leq F_{Q}$, which yields that $x^{*} \in S^{*}_{Q}$. In their work \cite[Propositions 14, 15]{BraDS14}, the authors show that the point $x^{*}$ in this case is separated from each $x_{i} \in \omega_{N}$, by a distance of at least $c_{s,d}N^{-1/d}$, with some $c_{s,d}>0$. 
\end{proof}

\noindent \textit{Proof of Theorem \ref{thm:sub_separation}:} Follows immediately from Theorem \ref{thm:pol_separation}.

\subsection{Separation for $0<s < d-2$}\label{sec:Separation for s < d-2}

\begin{theorem} \label{thm:pol_separation_small s}
Let $0 < s < d-2$, and $\omega_{N} \subset \S^{d}$ be a set of $N$ points, $N>2$. Then the point $x^{*}$, such that
$$
P_{s}(\omega_{N}) = \sum_{j=1}^{N} K_{s}(x^{*}, x_{j}),   
$$
satisfies 
$$
\textnormal{dist} \;(x^{*}, x_{j}) \geq \frac{c}{N^{1/(s+1)}},\;\; j=1,2,...,N,
$$
with some constant $c>0$, independent of $N$, and our choice of $\omega_N$.
\end{theorem}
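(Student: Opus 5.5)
The plan is to follow the Damelin--Maymeskul argument \cite{DamM05}, replacing the energy-minimality of a full configuration by the minimality of $x^*$ for the potential $U(x):=\sum_{j=1}^N K_s(x,x_j)$. Fix $j$ and set $\rho:=\|x^*-x_j\|$. We may assume $\rho$ is small; otherwise there is nothing to prove. Since $x^*$ is a global minimizer of $U$ on $\S^d$, we have
$$U(x^*) \le \int_{\S^d} U\,d\nu$$
for every probability measure $\nu$. The idea is to test against a measure $\nu$ that moves the point away from $x_j$ while staying near $x^*$, and to show that the gain from the term $K_s(\cdot,x_j)$ can only be compensated by the other $N-1$ terms if $\rho\gtrsim N^{-1/(s+1)}$.

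\textbf{Step 1 (upper bound on $U(x^*)$).} Averaging $U$ against $\sigma$ gives $U(x^*)\le N I_s$. This only controls the sum as a whole, so it is not enough by itself. Instead I would use a local comparison. Let $\nu$ be the normalized surface measure on the $(d-1)$-dimensional geodesic sphere $S(x^*,t)$ of radius $t=\kappa\rho$, with $\kappa$ a large constant, or alternatively on a small cap. For the far term $K_s(\cdot,x_j)$, which has $x_j$ at distance $\rho\ll t$ from the center, the average over $S(x^*,t)$ is comparable to $t^{-s}$, while $K_s(x^*,x_j)=\rho^{-s}/s$. This produces a gain of order $\rho^{-s}$.

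\textbf{Step 2 (loss from the other points).} For $i\neq j$, I need an upper bound on
$$\int K_s(\cdot,x_i)\,d\nu - K_s(x^*,x_i).$$
Because $0<s<d-2$, the kernel $\|x-y\|^{-s}$ is superharmonic in the ambient $\R^{d+1}$ sense, but on the sphere it is not subharmonic; by \eqref{eq:Laplace Beltrami of Riesz on Sphere}, $\bigtriangleup_x K_s$ changes sign. This is exactly where the exponent $s+1$ appears. I would Taylor-expand the spherical average: the first-order term vanishes by symmetry, so the difference is bounded by $C t^2\,\sup_{B(x^*,t)}|\bigtriangleup_x K_s(\cdot,x_i)|\lesssim t^2\|x^*-x_i\|^{-s-2}$ whenever $\|x^*-x_i\|\ge 2t$. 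A cruder first-order estimate is cheaper: bounding the Lipschitz constant gives $t\,\|x^*-x_i\|^{-s-1}$, and more simply $|\nabla K_s|\lesssim \|x-x_i\|^{-s-1}$. Following \cite{DamM05}, I would use the Lipschitz route. Points with $\|x^*-x_i\|<2t$ are handled separately: each such point contributes its own large positive term to $U(x^*)$, and those terms can be absorbed into the gain.

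\textbf{Step 3 (conclusion and main obstacle).} Combining Steps 1 and 2, minimality gives
$$\rho^{-s}\lesssim \sum_{i\ne j} t\,\|x^*-x_i\|^{-s-1}.$$
The crude bound $\|x^*-x_i\|^{-s-1}\le (\text{nearest distance})^{-s-1}$ is useless here. Instead I would bound the sum using $t\le$ const and the count of $N$ points, which gives a right-hand side of order $N\rho$. This yields $\rho^{-s}\lesssim N\rho$, that is, $\rho\gtrsim N^{-1/(s+1)}$, which is the claimed rate.

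The main obstacle is justifying the linear-in-$t$ loss bound uniformly, in particular for points $x_i$ very close to $x^*$, where the gradient blows up. I expect to handle this by choosing $\nu$ supported on a small sphere around $x^*$ in the direction away from $x_j$. To be precise, I would take a point $y$ with $\|y-x^*\|=t$ and use the convexity of $r\mapsto r^{-s}$ along a segment, as in \cite{DamM05}. Any $x_i$ nearer to $x^*$ than $x_j$ would contradict the choice of $j$, so I would pick $j$ to be the nearest point to $x^*$. With that choice, all $\|x^*-x_i\|\ge\rho$, and the gradient bound becomes $\sum_{i\ne j}\rho\cdot$ (bounded quantity). The care lies in summing these terms to get $O(N)$ rather than $O(N\rho^{-s-1})$; the plan is to use $t=\kappa\rho$ only for the close points and the global bound $U(x^*)\le NI_s$ for the rest.
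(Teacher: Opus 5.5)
Your Step 3 is where the argument fails. You settle on a single comparison point $y$ with $\|y-x^*\|=t=\kappa\rho$, chosen away from $x_j$. With that choice the first-order terms no longer cancel, and your bound for the cost from the other points is $t\sum_{i\ne j}\|x^*-x_i\|^{-s-1}$. To make this $O(N\rho)$ you would need the $(s+1)$-potential $\sum_{i\ne j}\|x^*-x_i\|^{-s-1}$ to be $O(N)$, and nothing available controls it. The only global information, $U(x^*)\le NI_s$, bounds the $s$-potential. Choosing $j$ nearest to $x^*$ only gives $\|x^*-x_i\|\ge\rho$, so each gradient is bounded by $\rho^{-s-1}$, not by a constant. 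The best you can extract is $\sum_{i\ne j}\|x^*-x_i\|^{-s-1}\le\rho^{-1}\sum_i\|x^*-x_i\|^{-s}\le sNI_s/\rho$. That is a loss of order $\kappa N$, which gives only $\rho^{-s}\lesssim N$, i.e.\ $\rho\gtrsim N^{-1/s}$. Your hypotheses cannot do better: placing $\asymp N\rho^{s}$ of the points at distance $\asymp\rho$ from $x^*$ is compatible both with $U(x^*)\le NI_s$ and with $x_j$ being the nearest point. Splitting into ``$t=\kappa\rho$ for the close points and the global bound for the rest'' is not available either, because one and the same test point (or measure) must be used for every term.

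The balance $\rho^{-s}\lesssim N\rho$ you are aiming for is exactly what the paper obtains. It does so by making the loss \emph{multiplicative}, i.e.\ proportional to the whole potential, and this requires leaving the sphere. Extend $U=\sum_i K_s(\cdot,x_i)$ to $\R^{d+1}$. Since $s<d-1$, $U$ is superharmonic there, so by the minimum principle $U\ge\gamma_N:=U(x^*)$ on the closed unit ball. For $1<\|x\|\le1+\varepsilon$, the reflected point $\tilde x=(2-\|x\|)x/\|x\|$ satisfies $\|\tilde x-y\|\ge(1-\varepsilon)\|x-y\|$ for $y\in\S^d$. Hence $U(x)\ge(1-\varepsilon)^sU(\tilde x)\ge(1-\varepsilon)^s\gamma_N$. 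At $x=(1+\varepsilon)x^*$, every distance to the $x_i$ is at least $\|x^*-x_i\|$, and the distance to $x_j$ is at least $\varepsilon$. So $U(x)\le\gamma_N-\frac1s\rho^{-s}+\frac1s\varepsilon^{-s}$. Together with $\gamma_N\le NI_s$ this gives $\rho^{-s}\le\varepsilon^{-s}+C\varepsilon N$, and $\varepsilon=N^{-1/(s+1)}$ concludes.

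Alternatively, the second-order route you discarded in Step 2 can be rescued if you use the \emph{sign} of the spherical Laplacian rather than its absolute value. Take $s<d-2$ and the paper's convention $\bigtriangleup=-\operatorname{div}\nabla$. Then $\bigtriangleup_xK_s(x,x_i)\ge0$ on a cap of fixed radius about $x_i$, and $\bigtriangleup_xK_s(x,x_i)\ge-M_{s,d}$ everywhere. So averaging over $B(x^*,t)$ raises each term $i\ne j$ by at most $Ct^2$. This gives $\rho^{-s}\lesssim t^{-s}+Nt^2$, and with $t=\kappa\rho$, $\kappa$ large, even $\rho\gtrsim N^{-1/(s+2)}$.
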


\begin{proof}
For fixed $\varepsilon_0 >0$ sufficiently small, let $\varepsilon \in (0, \varepsilon_0)$. For any $x$ satisfying $1< \|x\| \leq 1 + \ve$, denote 
$\widetilde{x} := (2-\|x\|)\frac{x}{\|x\|}$. Then one can easily verify that 
$$
\underset{y \in \S^{d}}{\min} \frac{\|\widetilde{x} -y\|}{\|x-y\|} = 1 - \frac{2(\|x\|-1)}{\|x\|+1} > 1 - \ve,
$$

Then, for $1< \|x\| \leq 1 + \ve$, we have 
\begin{align*}
	U_{s}^{\nu^{*}_{s,N}} (x) &= N^{-1} \sum_{i=1}^{N} \frac{1}{s}\|x-x_{i}\|^{-s} \\&= N^{-1} \sum_{i=1}^{N} \frac{1}{s} \|\widetilde{x} - x_{i}\|^{-s} \bigg(\frac{\|\widetilde{x} - x_{i}\|}{\|x -x_{i}\|}\bigg)^{s} \geq (1 -\ve)^{s}   U_{s}^{\nu^{*}_{s,N}} (\widetilde{x}) 
\end{align*}
Set $\gamma_{N}:= \sum_{j=1}^{N} \frac{1}{s} \frac{1}{\|x^{*} - x_{j}\|^{s}} = \underset{y \in \S^{d}}{\inf} \sum_{j=1}^{N} \frac{1}{s}\frac{1}{\|y - x_{j}\|^{s}}$.

Since 
$
U_{s}^{\nu^{*}_{s,N}} (y)  \geq \frac{\gamma_{N}}{N},
$
for $y \in \S^d$, then by superharmonicity of the potential in $\mathbb{R}^{d+1}$ this inequality extends to $\|y\|\leq 1$,  and we have 
\begin{equation}
U_{s}^{\nu^{*}_{s,N}} (x)  \geq (1 -\ve)^{s}    \frac{\gamma_{N}}{N}
\end{equation}
	On the other hand, for the point $x$ lying on the ray through $0$ and $x^{*}$ with $\|x\| = 1+\ve$, we have $\|x-x_{i_{0}}\| \geq \ve$, $i_0 \in \{1,...,N\}$, and
\begin{align*} 
	U_{s}^{\nu^{*}_{s,N}} (x) &= N^{-1} \sum_{j \ne i_{0}} \frac{1}{s} \|x-x_{j}\|^{-s} + N^{-1} \frac{1}{s}\|x-x_{i_{0}}\|^{-s}
	\\  
	&\leq \frac{\gamma_{N}}{N} - N^{-1} \frac{1}{s} \|x^{*}-x_{i_{0}}\|^{-s} +N^{-1} \frac{1}{s} \ve^{-s}  
\end{align*} 
Combining the lower and upper bounds for the potential, we get
$$
\frac{\gamma_{N}}{N} - N^{-1}\frac{1}{s} \|x^{*}-x_{i_{0}}\|^{-s} +N^{-1} \frac{1}{s} \ve^{-s} \geq   (1-C\ve) \frac{\gamma_{N}}{N}, 
$$
By the rotational invariance of $\sigma$, it follows that $U_s^{\sigma}(x) = I_s$ for all $x \in \mathbb{S}^d$. Hence, we obtain the bound  
$$
\gamma_{N} \leq I_s N.
$$
Consequently,
$$
\frac{1}{s}\ve^{-s}  + C\ve I_s N \geq \frac{1}{s}\|x^{*}-x_{i_{0}}\|^{-s},
$$
so that
$$
\|x^{*}-x_{i_{0}}\| \geq C_{1} N^{-\frac{1}{s+1}}.
$$
\end{proof}

\noindent \textit{Proof of Theorem \ref{th:separation}:} Follows immediately from Theorem \ref{thm:pol_separation_small s}.

\subsection{Separation Properties Related to Polarization for $s \leq d-2$}\label{sec:Separation improvement, s < d-2}

\begin{theorem} \label{thm:improved separation superharmonic}
    Suppose $d \geq 3$, $s \in (0, d-2]$, and
    
\begin{equation}\label{eq:PolBound}
    P_s(\omega_N) \geq N I_s(\sigma) - Cs N^{\frac{s}{s+2}}
\end{equation}
for some $C > 0$. Then for any $x \in \mathbb{S}^d$ such that $ \sum_{j=1}^{N} K_s(x, x_j) = P_s(\omega_N)$, then for all $j \in \{ 1, ..., N\}$
\begin{equation}
    \|x - x_j\| \geq B N^{-\frac{1}{s+2}},
\end{equation}
where $B$ does not depend on $N$ or the specific choice of configuration satisfying (\ref{eq:PolBound}).
\end{theorem}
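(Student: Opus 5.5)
The approach is a local second-order mean-value argument on $\mathbb{S}^d$ at the minimizer $x$. Let $U(y)=\sum_{j=1}^N K_s(y,x_j)$ and let $x_{j_0}$ be the point of $\omega_N$ nearest to $x$, with $\delta=\|x-x_{j_0}\|$. The bound in Theorem \ref{thm:pol_separation_small s} came from a first-order (radial reflection) comparison, which loses a factor $1-C\varepsilon$ against $\gamma_N\approx N I_s$. To reach $N^{-1/(s+2)}$ I would replace this by a second-order (Laplacian) comparison, so that the loss is of order $N\rho^2$ instead of $N\varepsilon$. The hypothesis \eqref{eq:PolBound} is used to control that loss.

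\emph{Step 1 (local Green representation).} On a geodesic cap $B(x,\rho)$ with $\rho<\pi$, write the Riesz–Green identity for the Laplace–Beltrami operator \eqref{eq:Laplace-Beltrami on Sphere}:
\begin{equation*}
U(x)=\frac{1}{\sigma_{d-1}(S(x,\rho))}\int_{S(x,\rho)}U\,d\sigma_{d-1}+\int_{B(x,\rho)}g_\rho(x,y)\,\bigtriangleup U(y)\,d\sigma(y),
\end{equation*}
where $g_\rho\ge 0$ is the Dirichlet Green function of the cap. Since $U\ge U(x)=P_s(\omega_N)$ on $\mathbb{S}^d$, the boundary average is at least $P_s(\omega_N)$. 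Hence
\begin{equation*}
\int_{B(x,\rho)}g_\rho(x,y)\,\bigtriangleup U(y)\,d\sigma(y)\le 0,
\end{equation*}
with the sign convention $\bigtriangleup=-\operatorname{div}\nabla$, under which $U$ being superharmonic means $\bigtriangleup U\ge0$.

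\emph{Step 2 (sign of $\bigtriangleup U$).} By \eqref{eq:Laplace Beltrami of Riesz on Sphere}, for $s\le d-2$ each term $\bigtriangleup K_s(\cdot,x_j)$ is nonnegative wherever $\langle y,x_j\rangle\ge (s+2)/(2d-s-2)$. Near $x_j$ it is comparable to $\|y-x_j\|^{-s-2}$, because the factor equals $2d-2s-4\ge0$ at $t=1$; in the borderline case $s=d-2$ one keeps the next-order term instead. Away from that region it is bounded below by an absolute constant. So $\bigtriangleup U\ge -c_1\,\#\{\text{far points}\}+c_2\|y-x_{j_0}\|^{-s-2}$. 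I would not use the crude bound $-c_1N$ here. Instead, I would estimate the negative part through the exact identity $\sum_j\bigtriangleup K_s(\cdot,x_j)=\bigtriangleup(U-NI_s)$ tested against $g_\rho$, and transfer the smallness of $U-P_s$ in mean, namely $\int(U-P_s)\,d\sigma=NI_s-P_s\le CsN^{s/(s+2)}$ by \eqref{eq:PolBound}, into an upper bound of order $N\rho^2+CsN^{s/(s+2)}$ for the negative contribution.

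\emph{Step 3 (conclusion).} If $\delta\le\rho/2$, then using $g_\rho(x,y)\gtrsim\|x-y\|^{2-d}$ on $B(x,\rho/2)$ (since $d\ge3$), the positive contribution satisfies
\begin{equation*}
\int_{B(x,\rho)}g_\rho\,\|y-x_{j_0}\|^{-s-2}\,d\sigma\gtrsim\delta^{-s}.
\end{equation*}
Combined with Step 1 this gives $\delta^{-s}\lesssim N\rho^2+CsN^{s/(s+2)}$. Choosing $\rho=\eta N^{-1/(s+2)}$ makes both right-hand terms of order $N^{s/(s+2)}$, and therefore $\delta\ge BN^{-1/(s+2)}$, with $B$ depending only on $s,d,C,\eta$. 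If instead $\delta>\rho/2$, the conclusion holds trivially with $B=\eta/2$.

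The main obstacle is Step 2. The far terms have $\bigtriangleup K_s<0$ with total mass of order $N$, and integrating them against $g_\rho$ naively costs $N\rho^2$. The point is to show that this cost is genuinely of that order and not larger: the points near, but outside, $B(x,\rho)$ contribute $\|y-x_j\|^{-s-2}$ with a positive sign, which only helps. The defect bound \eqref{eq:PolBound} is needed to rule out $U$ exceeding its mean by a large amount on the cap. My first attempt would be to split $U=NU^{\sigma}+(U-NU^{\sigma})$, using that $U^\sigma\equiv I_s$ and that $U-P_s\ge0$ has small integral, and to bound $\int_{S(x,\rho)}(U-P_s)$ by averaging over a range of radii $\rho\in[\eta N^{-1/(s+2)},2\eta N^{-1/(s+2)}]$ (a Chebyshev/pigeonhole choice of a good radius).
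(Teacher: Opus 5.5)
Your route is genuinely different from the paper's. The paper argues extrinsically. Superharmonicity in $\mathbb{R}^{d+1}$ and the reflection $x\mapsto\widetilde{x}$ give $U\ge(1-\varepsilon)^sP_s$ on $\{\|x\|\le 1+\varepsilon\}$. Then the minimum principle is applied to $U-K_s(\cdot,x_j)$ on the $d$-disk of radius $\delta$ in the tangent hyperplane at the minimizer, where this function is superharmonic precisely because $s\le d-2$. With $\varepsilon=\delta^2$, the loss $(1-(1-\delta^2)^s)P_s\lesssim N\delta^2$ is the analogue of your $N\rho^2$.

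Your intrinsic version does work for $0<s<d-2$, but not through the Step~2 you propose. The planned replacement fails. Since $\bigtriangleup(U-NI_s)=\bigtriangleup U$, testing it against $g_\rho$ only returns Step~1's identity $\int g_\rho\bigtriangleup U\,d\sigma=U(x)-\overline{U}_\rho$, where $\overline{U}_\rho$ is the average over $S(x,\rho)$. Moreover, \eqref{eq:PolBound} controls only $\int_{\mathbb{S}^d}(U-P_s)\,d\sigma$, which transfers to averages over $S(x,\rho)$ only with a loss of $\rho^{-d}$.

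What you need is the crude bound you set aside. $\bigtriangleup_yK_s(y,x_j)$ is negative only where $\|y-x_j\|^2>4(d-s-2)/(2d-s-2)$, so $\bigtriangleup K_s(\cdot,x_j)\ge-c_1(s,d)$ everywhere. Hence $\int_{B(x,\rho)}g_\rho(\bigtriangleup U)_-\,d\sigma\le c_1N\int_{B(x,\rho)}g_\rho\,d\sigma\lesssim N\rho^2$. Combined with your positive $x_{j_0}$-term, this gives $\delta^{-s}\lesssim N\rho^2$ directly. No upper bound on $U$ over the cap is required: large values of $U$ near $S(x,\rho)$ only help, since Step~1 uses nothing but $U\ge U(x)$. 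So \eqref{eq:PolBound} never enters. The paper's proof is in the same position: the middle term of its final chain equals $P_s(\omega_{N-1})/N$, so its conclusion already follows from $P_s\le NI_s$.

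The genuine gap is the endpoint $s=d-2$, which the statement includes. There $\bigtriangleup_yK_{d-2}(y,x_j)=-\frac d4\|y-x_j\|^{2-d}<0$ for all $y\ne x_j$. Consequences:
\begin{itemize}
\item the ``next-order term'' you invoke has the wrong sign;
\item the only positive part of $\bigtriangleup K_{d-2}(\cdot,x_j)$ is a Dirac mass at $x_j$;
\item $\bigtriangleup U=c\sum_j\delta_{x_j}-\frac{d(d-2)}{4}U$ is unbounded below near every $x_j$.
\end{itemize}
So your pointwise inequality for $\bigtriangleup U$ is false, the bound $-c_1N$ is unavailable, and Step~3 does not follow as written.

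Here an upper bound on $U$ inside the cap really is needed, but it comes from the value at the center, not from \eqref{eq:PolBound}. One checks that $\int_{B(x,\rho)}g_\rho(x,y)K_{d-2}(y,x_j)\,d\sigma(y)\lesssim\rho^2K_{d-2}(x,x_j)$ for every $j$, for $\rho$ small. So the negative part is $\lesssim\rho^2U(x)=\rho^2P_s\le\rho^2NI_s$. Meanwhile the Dirac mass at $x_{j_0}$ contributes $c\,g_\rho(x,x_{j_0})\gtrsim\delta^{2-d}=\delta^{-s}$ when $\delta\le\rho/2$, and you conclude as in Step~3 with $\rho\sim N^{-1/d}$. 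Alternatively, at $s=d-2$ the target rate $N^{-1/(s+2)}=N^{-1/d}$ is exactly Theorem~\ref{thm:pol_separation}.
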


Here, we use a proof very similar to that of Theorem 3.5 in \cite{DamM05}.

\begin{proof}

Suppose $\omega_{N-1} = \{ x_1, ..., x_{N-1} \} \subset \mathbb{S}^d$ and set $x_N\in \mathbb{S}^d$ such that $\frac{1}{s} \sum_{j=1}^{N-1} \|x_N - x_j\|^{-s} = P_s(\omega_{N-1})$. Then for all $x \in \mathbb{S}^d$, we have that
\begin{equation}\label{eq:basic polarization bound}
    \frac{1}{N} \sum_{j=1}^{N-1} \frac{1}{s} \|x-x_j\|^{-s} \geq \frac{1}{N} P_s(\omega_{N-1}),
\end{equation}
so

\begin{align*}
    I_s(\sigma) & \geq  \frac{1}{N} \int_{ \mathbb{S}^d} \Bigg( \sum_{j=1}^{N-1} \frac{1}{s} \|x - x_j\|^{-s} \Bigg) d \sigma(x)\\
    & \geq \frac{1}{N} P_s(\omega_{N-1}).\\
\end{align*}

Due to the superharmonicity of $\frac{1}{N} \sum_{j=1}^{N-1} \frac{1}{s} \|x-x_j\|^{-s}$ on $\mathbb{R}^{d+1}$, by the minimal value principal, we also see that \eqref{eq:basic polarization bound} holds for $\|x\|\leq 1$.

 Just like in the beginning of the proof of Theorem \ref{thm:pol_separation_small s}, we can obtain the inequality

$$
    \frac{1}{N} \sum_{j=1}^{N-1} \frac{1}{s} \|x-x_j\|^{-s} \geq (1- \varepsilon)^s N^{-1} P_s( \omega_{N-1}).
$$

Now, let it be taken for granted that for some $\delta > 0$ and $j \in \{1, ..., N-1\}$, $\|x_N - x_j\| \leq \frac{\delta}{2}$. Then for any $x$ such that $\|x\| \leq 1 + \varepsilon$ and $\|x-x_N\| = \delta$,
$$ \frac{1}{N} \sum_{i =1}^{N-1} \frac{1}{s}\|x- x_i\|^{-s} - \frac{1}{s N} \|x- x_j\|^{-s} \geq (1- \varepsilon)^s N^{-1} P_s( \omega_{N-1}) - C_1 \frac{1}{N s}\delta^{-s}$$
for some constant $C_1 > 0$. Due to the superharmonicity of $\frac{1}{N} \sum_{i =1}^{N-1} \frac{1}{s}\|x- x_i\|^{-s} - \frac{1}{s N} \|x- x_j\|^{-s}$, on the hyperplane touching $\mathbb{S}^d$ at $x_N$, we have that
$$ \frac{1}{N} \sum_{i =1}^{N-1} \frac{1}{s}\|x_N - x_i\|^{-s} - \frac{1}{s N} \|x_N - x_j\|^{-s} \geq (1- \varepsilon)^s N^{-1} P_s( \omega_{N-1}) - C_1 \frac{1}{N s} \delta^{-s}.$$
Thus
\begin{equation}
    I_s(\sigma) \geq \frac{1}{N} \sum_{i =1}^{N-1} \frac{1}{s}\|x_N- x_i\|^{-s} \geq (1- \varepsilon)^s N^{-1} P_s( \omega_{N-1}) - C_1 \frac{1}{N s} \delta^{-s} + \frac{1}{s N} \|x_N- x_j\|^{-s}.
\end{equation}

Now, assume that for some $C_2 > 0$ and $a < 1$
\begin{equation}
    P_s( \omega_{N-1}) \geq N I_s(\sigma) - C_2 \frac{1}{s} N^a.
\end{equation}
Then
$$I_s(\sigma) \geq I_s(\sigma) - \frac{C_2}{s} N^{a-1} - C_3 s \varepsilon  - C_1 \frac{1}{N s} \delta^{-s} + \frac{1}{s N} \|x_N- x_j\|^{-s},$$
giving us
$$\frac{C_2}{s} N^{a} + C_3 s \varepsilon N + \frac{C_1}{s} \delta^{-s} \geq  \frac{1}{s} \|x_N- x_j\|^{-s}. $$

Again, considering the $d$-dimensional hyperplane touching $\mathbb{S}^d$ at $x_N$, then the points $x$ on the $(d-1)$-dimensional sphere on that hyperplane, with center $x_N$ and radius $\delta$, satisfy $\|x\| < 1 + \delta^2$. So setting $\delta = N^{-1/(s+2)}$ and $\varepsilon = \delta^2$, we have
$$\frac{C_2}{s} N^{a} + \frac{C_4}{s} N^{s/(s+2)} \geq  \frac{1}{s} \|x_N- x_j\|^{-s}.$$

Now, setting $a \geq \frac{s}{s+2}$, we have
$$\|x_N - x_j\| \geq C_5 N^{\frac{-1}{s+2}}.$$

\end{proof}

\section{Polarization Bounds for $s < d-2$} \label{sec:polarization for s<d-2}

The case $s<d-2$ is more complicated, primarily due to the lack of results concerning the well-separation of both optimal and greedy configurations. In this section we derive polarization bounds that are likely not optimal and could potentially be improved.

\begin{lemma} \label{lemma:meanv_subh}
Suppose $0<s < d-2$. If $y_0 \notin B(x_0,\ve)$, then
\begin{equation}
\int_{B(x_0,\ve)} K_{s}(x,y_0)d\sigma_d(x) \geq  \sigma(B(x_0,\ve)) K_{s}(x_0,y_0) 
+ C (s + 2 - 2d) \Big(1- \cos\Big(\frac{\theta_0}{2}\Big)\Big)^{-\frac{s}{2} - 1}  \ve^{d+2},
\end{equation}
where $\theta_0$ is an angle between $x_0$ and $y_0$.
\end{lemma}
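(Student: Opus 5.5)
Fix $y_0$ and write $u(x)=K_s(x,y_0)$, a zonal function of $\cos\theta=\langle x,y_0\rangle$. For $0<s<d-2$ it is no longer subharmonic, so the mean value inequality \eqref{eq:meanv} fails, and the plan is to quantify the failure. By \eqref{eq:Laplace Beltrami of Riesz on Sphere},
\[
\bigtriangleup u(x) = -\tfrac12\|x-y_0\|^{-s-2}\bigl(s+2+(s+2-2d)\langle x,y_0\rangle\bigr),
\]
and since $s+2-2d<0$, this is bounded above by a nonnegative multiple of $(2d-s-2)\|x-y_0\|^{-s-2}$. I will redo Gardiner's argument with a Green-type identity on geodesic spheres. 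Let $m(r)$ be the average of $u$ over $S(x_0,r)$ with respect to $\sigma_{d-1}$, and $A(r)=\sigma_{d-1}(S(x_0,r))\propto \sin^{d-1} r$. Green's formula on $B(x_0,r)$ gives
\[
A(r)\,m'(r)=-\int_{B(x_0,r)}\bigtriangleup u\,dV .
\]
(The sign follows the paper's convention $\bigtriangleup=-\operatorname{div}\nabla$.) Consequently $m'(r)\ge -A(r)^{-1}\,\mathrm{vol}(B(x_0,r))\,\sup_{B(x_0,r)}\bigl(-\bigtriangleup u\bigr)^{-}$, where only the part with the bad sign, of size $(2d-s-2)\|x-y_0\|^{-s-2}$, contributes.

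Next I will bound $\|x-y_0\|$ from below on $B(x_0,\ve)$. Since $y_0\notin B(x_0,\ve)$, a point $x$ there has geodesic distance to $y_0$ at least of order $\theta_0-\ve$. I would like this to be $\gtrsim\theta_0/2$. When $\ve\le\theta_0/2$ this is fine: then $\|x-y_0\|^2 = 2(1-\cos(\text{angle}))\ge c\,(1-\cos(\theta_0/2))$, which yields the factor $(1-\cos(\theta_0/2))^{-s/2-1}$. The remaining range $\theta_0/2<\ve\le\theta_0$ is the main obstacle. There the singularity of $u$ at $y_0$ lies near the boundary of the ball and the correction cannot be bounded uniformly. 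I would handle it either by restricting the lemma to $\ve\le\theta_0/2$, which is all the later polarization application needs since the balls will have radius comparable to half the separation, or by noting that the singular part is positive and can simply be dropped.

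With $\mathrm{vol}(B(x_0,r))/A(r)\le C r$, the bound on $m'$ gives $m(r)\ge u(x_0)-C(2d-s-2)(1-\cos(\theta_0/2))^{-s/2-1}r^2$, using $m(0)=u(x_0)$. Finally I integrate over spheres as in the proof of \eqref{eq:meanv}:
\[
\int_{B(x_0,\ve)}u\,d\sigma=\int_0^\ve A(r)m(r)\,dr \ge \sigma(B(x_0,\ve))u(x_0)-C'(2d-s-2)\bigl(1-\cos(\theta_0/2)\bigr)^{-s/2-1}\int_0^\ve r^{d-1}r^2\,dr .
\]
The last integral is of order $\ve^{d+2}$, and $-(2d-s-2)=s+2-2d$, which gives the claimed inequality after normalizing $\sigma$ and absorbing constants into $C$.
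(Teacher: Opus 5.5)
Your argument is correct and is essentially the paper's proof: your Green's-formula identity for the spherical means is exactly the formula \eqref{average} the paper quotes, followed by the same sup-bound on the part of the Laplacian with the unfavorable sign, the same estimate $\int_0^r \sigma_d(B(x_0,t))/\sigma_{d-1}(S(x_0,t))\,dt\lesssim r^2$, and the same integration over spheres to obtain $\ve^{d+2}$, and the paper's proof likewise uses $\ve\le\theta_0/2$ without stating it in the hypothesis (your uniform pointwise bound on the negative part is slightly cleaner than the paper's hemisphere case split). Do discard your second suggestion for the range $\theta_0/2<\ve\le\theta_0$, though: near $y_0$ the Laplacian (usual sign) behaves like $(s+2-d)\|x-y_0\|^{-s-2}<0$, so the singular part has the unfavorable sign and cannot simply be dropped.
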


\begin{proof}
 Assuming that the point $y_0$ is at the North pole, the kernel $K_{s}$ in polar coordinates is given by $K_{s}(x,y_0) = \frac{1}{s} (2 - 2 \cos(\theta))^{-s/2}$, and the Laplacian (taken with the opposite sign from that in the definition \eqref{eq:Laplace-Beltrami on Sphere}) is
\begin{equation} \label{laplacian}
\triangle_x f = 2^{-\frac{s}{2} - 1} (1- \cos\theta)^{-\frac{s}{2} - 1}\Big[\Big(\frac{s}{2} + 1 - d\Big) \cos \theta + \Big(\frac{s}{2} + 1\Big)\Big],
\end{equation}
where $\theta$ denotes an angle between $x$ and $y_0$.

Similar to \cite[Theorem A.2]{Bel13}, that follows from the proof of \cite[Prop 6.21]{Bes78}, the average of $K_{s}(x,y_0)$ over the boundary of the ball $B(x_0,r)$, not containing $y_0$, can be expressed as
\begin{equation} \label{average}
\frac{1}{\sigma_{d-1}(S(x_0,r))} \int_{S(x_0,r)} K_{s}(x,y_0) d \sigma_{d-1}(x) = K_{s}(x_0,y_0) + \int_{0}^{r} \frac{\int_{B(x_0,t)} \triangle_{x} K_{s}(x,y_0) d\sigma_d(x)}{\sigma_{d-1}(S(x_0,t))} dt
\end{equation}

Formula (\ref{average}) allows us to evaluate
\begin{align*}
\int_{B(x_0,\ve)} K_{s}(x,y_0) &d\sigma_d(x) = \int_{0}^{\ve} \int_{S(x_0, r)} K_{s} d\sigma_{d-1} dr\\
& = \int_{0}^{\ve} \sigma_{d-1}(S(x_0,r)) \bigg[ K_{s}(x_0,y_0) + \int_{0}^{r} \frac{\int_{B(x_0,t)} \triangle_{x} K_{s}(x,y_0) d\sigma_d(x)}{\sigma_{d-1}(S(x_0,t))} dt \bigg] dr\\
&= \sigma(B(x_0,\ve)) K_{s}(x_0,y_0) + \int_{0}^{\ve} \sigma_{d-1}(S(x_0,r))  \bigg[\int_{0}^{r} \frac{\int_{B(x_0,t)} \triangle_{x} K_{s}(x,y_0) d\sigma_d(x)}{\sigma_{d-1}(S(x_0,t))} dt \bigg]  dr.
\end{align*}

First, we investigate the integral $\int_{B(x_0,t)} \triangle_{x} K_{s}(x,y_0) d\sigma_d(x)$. Notice that since $s< d-2$, the expression in brackets in (\ref{laplacian}) becomes positive as soon as $\theta \in [\pi/2 - c,\pi]$, where $c = \arccos{\frac{s/2+1}{d-s/2-1}}$. In this case, we can assume without loss of generality that $\ve<c/2$, and whenever the ball $B(x_0, \ve)$ intersects the lower hemisphere, we take the lower bound for the integral over this ball to be 0. Thus, in the estimates below we will assume that $B(x_0, \ve)$ is located in the upper hemisphere, so that $\theta \in [0,\pi/2)$. Let $\theta_0$ denote the angle between $x_0$ and $y_0$. Notice that for all $x \in B(x_0,t)$ the angle $\theta$ between $x$ and $y_0$ satisfies $\cos(\theta) \leq \cos(\theta_0 - t)$.
Using this, we obtain
\begin{equation} \label{averLapl}
\int_{B(x_0,t)} \triangle_{x} K_{s}(x,y_0) d\sigma_d(x) \geq C_1 (\frac{s}{2} + 1 - d) (1- \cos(\theta_0 - t))^{-\frac{s}{2} - 1} \sigma_d(B(x_0, t)).
\end{equation}

Next, we use $\sigma_{d-1}(S(x_0,t)) = \frac{ 2\pi^{d/2}}{\Gamma(\frac{d}{2})} (\sin t)^{d-1}$ to get a bound $\int_{0}^{r} \frac{\sigma_d(B(x_0,t))}{\sigma_{d-1}(S(x_0,t))} dt \leq C_2 r^2$

Finally, combining the bounds, we get 

\begin{align*} \label{LowBound}
\int_{0}^{\ve} \sigma_{d-1}(S(x_0,r))  &\bigg[\int_{0}^{r} \frac{\int_{B(x_0,t)} \triangle_{x} K_{s}(x,y_0) d\sigma_d(x)}{\sigma_{d-1}(S(x_0,t))} dt \bigg]  dr \\ 
&\geq C_3 (\frac{s}{2} + 1 - d) (1- \cos(\theta_0 - \ve))^{-\frac{s}{2} - 1} \ve^{d+2}
\end{align*}

 Since $\ve  \leq \frac{\theta_0}{2} $, the bound above can be simplified to

 \begin{align*} 
\int_{0}^{\ve} \sigma_{d-1}(S(x_0,r))  &\Big[\int_{0}^{r} \frac{\int_{B(x_0,t)} \triangle_{x} K_{s}(x,y_0) d\sigma_d(x)}{\sigma_{d-1}(S(x_0,t))} dt \Big]  dr \\ 
&\geq C_4 \Big(\frac{s}{2} + 1 - d \Big) \Big(1- \cos\Big(\frac{\theta_0}{2}\Big) \Big)^{-\frac{s}{2} - 1}  \ve^{d+2}
\end{align*}

\end{proof}

\begin{theorem}
Suppose $0 < s < d-2$, and let a sequence of configurations $\{\omega_{N,d}\} \subset \S^{d}$ have a separation of order $N^{-1/\beta}$, $\beta \leq d$. Then there exist positive constants $c_{s,d}$ such that for all $N \in \mathbb{N}$, and all $\alpha \leq \frac{\beta(s+2)}{\beta+s+2}$,
\begin{equation}\label{eq:opt_polarization_subh}
  P_{s}(\omega_{N,d})   \leq I_{s,d} N - c_{s,d} N^{1+ \frac{s-d}{\alpha}}.
\end{equation}
If, in addition, $\beta > d-1$, then (\ref{eq:opt_polarization_subh}) holds for all $\alpha \leq \frac{\beta(s+2)}{(d-\beta)(s+1)+d}$.
\end{theorem}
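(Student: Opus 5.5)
The plan is to rerun the averaging argument from the proof of Theorem \ref{thm:Riesz Polarization}, with the ball radius left as a free parameter. Subharmonicity fails for $s<d-2$, so Lemma \ref{lemma:meanv_subh} will replace the mean value inequality; the price is an error term, which must be controlled using only the separation $\delta_N:=\delta(\omega_{N,d})\ge cN^{-1/\beta}$.

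\textbf{Setup.} Fix $\alpha$ in the allowed range and put $\ve=\ve_N:=\kappa N^{-1/\alpha}$. Since $\alpha\le \beta(s+2)/(\beta+s+2)<\beta$, we have $\ve_N\le \delta_N/4$ for all $N$ once $\kappa$ is small enough. The caps $B(x_{N,j},\ve)$ are then pairwise disjoint, and Lemma \ref{lemma:meanv_subh} applies with $\ve\le\theta_{ij}/2$, where $\theta_{ij}$ is the angle between $x_{N,i}$ and $x_{N,j}$. Define $\mu_N$ as in \eqref{def:measure muN} with $r_N=\ve$; note that $N\sigma(B(p,\ve))\asymp N^{1-d/\alpha}\to0$. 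Bounding $P_s$ by the $\mu_N$-average of the potential, exactly as in Section \ref{thm:sec:ProofPolarizRiesz}, gives
\[
P_s(\omega_{N,d})\le \frac{1}{1-N\sigma(B(p,\ve))}\Big(NI_s - N\!\int_{B(p,\ve)}\!K_s(x,p)\,d\sigma(x) - \sum_{i\ne j}\int_{B(x_{N,i},\ve)}\!K_s(x,x_{N,j})\,d\sigma(x)\Big).
\]

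\textbf{Estimating the three terms.} The self term satisfies $\int_{B(p,\ve)}K_s(x,p)\,d\sigma\ge \sigma(B(p,\ve))\tfrac1s\ve^{-s}$. For the cross terms, Lemma \ref{lemma:meanv_subh} gives
\[
\int_{B(x_{N,i},\ve)}K_s(x,x_{N,j})\,d\sigma \ge \sigma(B)K_s(x_{N,i},x_{N,j}) - C\ve^{d+2}\|x_{N,i}-x_{N,j}\|^{-s-2},
\]
using $s+2-2d<0$ and $1-\cos(\theta_{ij}/2)\asymp\|x_{N,i}-x_{N,j}\|^2$. Summing over $i\ne j$ produces $-\sigma(B)E_s(\omega_{N,d})$. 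By Theorem \ref{thm:Riesz Energy Asymptotics}, $-\sigma(B)E_s(\omega_{N,d})\le -\sigma(B)(I_sN^2-C'N^{1+s/d})$, which converts the leading term into $NI_s$ after renormalizing. What remains is
\[
P_s(\omega_{N,d})\le NI_s+\frac{\sigma(B)}{1-N\sigma(B)}\Big(C'N^{1+s/d}-\tfrac1s N\ve^{-s}\Big)+\frac{C\ve^{d+2}}{1-N\sigma(B)}\,S_N,
\qquad S_N:=\sum_{i\ne j}\|x_{N,i}-x_{N,j}\|^{-s-2}.
\]
The gain term is $-\sigma(B)N\ve^{-s}\asymp -N^{1+(s-d)/\alpha}$. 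It dominates $\sigma(B)N^{1+s/d}$ because $\alpha\le d$, and enlarging the implicit constant via $\kappa$ handles the case $\alpha=d$.

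\textbf{Main obstacle: bounding $S_N$ by separation alone.} The first step is the crude bound: every term is at most $\delta_N^{-s-2}$, so $S_N\le N^2\delta_N^{-s-2}\lesssim N^{2+(s+2)/\beta}$. Then $\ve^{d+2}S_N\lesssim N^{-(d+2)/\alpha+2+(s+2)/\beta}$, and requiring this to be at most a small multiple of $N^{1+(s-d)/\alpha}$ is exactly
\[
\frac{s+2}{\alpha}\ge 1+\frac{s+2}{\beta}\iff \alpha\le\frac{\beta(s+2)}{\beta+s+2},
\]
which is the first claim. At the endpoint, smallness comes from $\kappa$, since the error carries $\kappa^{d+2}$ and the gain only $\kappa^{d-s}$.

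For the refined claim when $\beta>d-1$, I would count neighbours: disjoint caps of radius $\delta_N/2$ show that the number of $x_{N,j}$ with $\|x_{N,j}-x_{N,i}\|\in[t,2t]$ is $\lesssim \min\{N,(t/\delta_N)^d\}$. Dyadic summation then bounds $\sum_j\|x_{N,i}-x_{N,j}\|^{-s-2}$ by a quantity that is smaller than $N\delta_N^{-s-2}$ once $\beta>d-1$. The dominant contribution comes from the scale where the caps saturate the available measure, and for $s+2<d$ the far field contributes only $O(N)$. Redoing the exponent comparison with this bound should yield $\alpha\le \beta(s+2)/((d-\beta)(s+1)+d)$. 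I expect matching this exponent exactly, i.e.\ choosing the right saturation scale and possibly splitting near and far pairs differently, to be the delicate part.
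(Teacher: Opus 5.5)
Your argument for the first range is the paper's argument. It uses caps of radius $\asymp N^{-1/\alpha}$, bounds the self term below by $\sigma(B)\tfrac1s\ve^{-s}$, applies Lemma~\ref{lemma:meanv_subh} to the cross terms together with the crude bound $\|x_{N,i}-x_{N,j}\|^{-s-2}\le\delta_N^{-s-2}$, and uses Theorem~\ref{thm:Riesz Energy Asymptotics} to absorb $-\sigma(B)E_s$, with the same exponent comparison.

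For the refined range you take a different route (dyadic shells instead of the paper's horizontal strips). You stopped just before the end, but nothing delicate remains. Since $0<s+2<d$, your count gives
\[
\sum_{j\ne i}\|x_{N,i}-x_{N,j}\|^{-s-2}\lesssim\delta_N^{-s-2}\sum_{m\ge0}\min\{N,2^{md}\}\,2^{-m(s+2)}\asymp N^{1-\frac{s+2}{d}}\,\delta_N^{-s-2}.
\]
The sum is geometric on both sides of the saturation radius $t_*=\delta_N N^{1/d}$. So the shells beyond $t_*$ contribute the same order $Nt_*^{-s-2}$, which is $O(N)$ only when $t_*\gtrsim1$. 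Hence $S_N\lesssim N^{2-\frac{s+2}{d}+\frac{s+2}{\beta}}$. Comparing $\ve^{d+2}S_N$ with the gain $N^{1+\frac{s-d}{\alpha}}$ gives the range $\alpha\le\alpha^*$, where
\[
\alpha^*=\frac{\beta(s+2)}{\beta+(d-\beta)(s+2)/d}\;\ge\;\frac{\beta(s+2)}{\beta+(d-\beta)(s+2)}=\frac{\beta(s+2)}{(d-\beta)(s+1)+d},
\]
with equality only at $\beta=d$, where both equal $s+2$.

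So you need not match the stated exponent exactly. The bound only weakens as $\alpha$ decreases, so the stated range follows a fortiori. Also $\alpha^*<\beta$ because $s+2<d$, so your caps remain disjoint. Finally, the improvement over the crude range holds for every $\beta\le d$, not only for $\beta>d-1$.

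This route is worth keeping, because the paper's strip count does not deliver its stated exponent. The paper bounds every strip of width $\asymp\delta_N$ by the uniform count $N^{(d-1)/\beta}$ and fills the first $K=N^{(\beta-d+1)/\beta}$ strips. It then treats $\sum_{k\le K}k^{-s-2}$ as if it were of order $K^{-(s+1)}$; that is the only way to reach its exponent $\frac{2d-\beta s+ds}{\beta}$. But this sum is of order one. So the strip estimate as written gives only the per-point bound $N^{(d+s+1)/\beta}$ and the range $\alpha\le\beta(s+2)/(d+s+1)$. When $\beta>d-1$, that range is strictly smaller than the stated one.

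What the strip argument lacks is exactly your local count $\lesssim(t/\delta_N)^d$: about $k^{d-1}$ points in the $k$-th strip, instead of $N^{(d-1)/\beta}$. This count yields an estimate for $S_N$ at least as strong as the paper's intermediate bound. Your route therefore proves the statement, with a slightly larger range of $\alpha$.
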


\begin{proof}

Suppose that the sequence of configurations $\{\omega_{N,d}\}$ has a separation of order $N^{-1/\beta}$, $\beta \leq d$. That is, there is some constant $c>0$, such that for each $N \in \mathbb{N}$
\begin{equation}
\min_{1 \leq i < j \leq N} \|x_{N,i} - x_{N,j}\| \geq \frac{c}{N^{1/\beta}}. 
\end{equation}

This means that for $\alpha \leq \beta$,
\begin{equation}
\min_{1 \leq i < j \leq N} \arccos ( \langle x_{N,i},  x_{N,j} \rangle) \geq \arccos \bigg( 1 - \frac{c^2}{2 N^{2/\alpha}} \bigg).
\end{equation}

Similar to (\ref{def:measure muN}), we define a probability measure $\mu_N$ with $r_N = \arccos \Big( 1 - \frac{c^2}{8 N^{2/a}} \Big)$ for each $N \in \mathbb{N}$. Observe that, with this choice of radius, there exists a constant $A>0$, such that $\sigma(B(p, r_N)) = A N^{-\frac{d}{a}} + \mathrm{o}(N^{-\frac{d}{a}})$.

We have that
\begin{align*}
P_s(\omega_{N,d}) & := \min_{x \in \mathbb{S}^d} \sum_{j=1}^{N} K_s(x, x_{N,j}) \\
 & \leq \sum_{j=1}^{N}  \int_{\mathbb{S}^d} K_s(x, x_{N,j}) d\mu_N (x) \\
 & =  \frac{1}{1 - N \sigma ( B(p, r_N))} \Big( N I_s( \sigma) - N \int_{B(p, r_N)} K_s(x, p) d \sigma(x) \\
 &\qquad - 2\sum_{1 \leq i < j \leq N} \int_{B(x_{N,i}, r_N)} K_s(x, x_{N,j}) d \sigma(x) \Big).
\end{align*}
Using the fact that $\argmin_{B(p, r_N)} K_s(x, p)$ is on the boundary of the ball we have

\begin{equation} \label{eqt:bound_cent_ball}
\begin{split} 
 \int_{B(p, r_N)} K_s(x, p) d \sigma(x) & \geq \sigma(B(p, r_N)) \frac{1}{s}(2-2\cos(r_N))^{-s/2}\\
& = \sigma(B(p, r_N)) \frac{2^s}{s c^s} N^{s/a}
\end{split}
\end{equation}
Applying Lemma \ref{lemma:meanv_subh} yields the following bound for the sum
 \begin{equation} \label{eqt:estim on integral}
 \begin{split}
 - 2\sum_{1 \leq i < j \leq N} \int_{B(x_{N,i}, r_N)} K_s(x, x_{N,j}) d \sigma_d(x) & \leq
 - 2 \sigma_d(B(p,r_{N})) \sum_{1 \leq i < j \leq N}  K_s(x_{N,i}, x_{N,j})  \\
 &+ (d-1-\frac{s}{2}) C_5 N(N-1) (N^{-1/\beta})^{-s-2} (c N^{-1/\alpha})^{d+2}\\
 & \leq - \sigma_d(B(p,r_{N})) E_{s}(\omega_N) + C_6 c^{d+2} N^{2 + (s+2)/\beta - (d+2)/\alpha}
 \end{split}
 \end{equation}

 When $\beta > d-1$, a slightly stronger bound than that in (\ref{eqt:estim on integral}) can be established. 
 Namely, suppose that $N$ points are separated from each other by the distance $c_1N^{-1/\beta}$, and take geodesic balls to be of radius $c N^{-1/\alpha}$, $\alpha \leq \beta$. Let $x_{i_0} \in \omega_{N,d}$. Our goal is to improve the upper bound for the sum $\sum_{\substack{1\le j\le n\\ j\ne i_0}} \Big(1- \cos\Big(\frac{\theta_j}{2}\Big)\Big)^{-\frac{s}{2} - 1}$ for each $i_{0} \in \{1, ..., N\}$, where $\theta_j$ denotes the angle between $x_{N,i_0}$ and $x_{N,j}$. Without loss of generality, we can assume that $x_{i_0}$ is located at the North Pole. Since the closer a point $x_{N,j}$ is to $x_{N,i_0}$, the larger the term $\Big(1- \cos\Big(\frac{\theta_j}{2}\Big)\Big)^{-\frac{s}{2} - 1}$ becomes, it suffices to estimate the sum under the assumption that the remaining $N-1$ points of the configuration are positioned as close as possible to $x_{N,i_0}$, subject to the separation condition. Under this assumption, we partition the sphere into horizontal strips and estimate both the maximal number of balls in each strip and the total number of strips that these balls can occupy. To this end, divide the range of the polar angle $[0,\pi]$ into intervals of equal length $c_1N^{-1/\beta}$. The corresponding strip on the sphere with polar angle $\theta \in [kc_1N^{-1/\beta}, (k+1)c_1N^{-1/\beta}]$, $k =0,1,2...$, contains at most $O (N^{(d-1)/\beta})$ balls. If the number of balls in each strip is in fact $\Theta (N^{(d-1)/\beta})$, then these balls occupy only the top $\Theta(N^{(\beta - d +1)/\beta})$ strips on the sphere. Consequently, we obtain the bound
 
 $$
 \sum_{\substack{1\le j\le n\\ j\ne i_0}}\Big(1- \cos\Big(\frac{\theta_j}{2}\Big)\Big)^{-\frac{s}{2} - 1} \leq \sum_{k=1}^{N^{(\beta - d +1)/\beta}}  N^{(d-1)/\beta} [kN^{-1/\beta}]^{-s-2}.
 $$
 
 We then have
 
 \begin{equation} \label{eqt:estim on integral2}
 \begin{split}
 - 2\sum_{1 \leq i < j \leq N} \int_{B(x_{N,i}, r_N)} K_s(x, x_{N,j}) & d \sigma_d(x)  \leq 
 - 2 \sigma_d(B(p,r_{N})) \sum_{1 \leq i < j \leq N}  K_s(x_{N.i}, x_{N,j})  \\
 & + C_7 N \sum_{k=1}^{N^{(\beta - d +1)/\beta}}  N^{(d-1)/\beta} [kN^{-1/\beta}]^{-s-2}   (c N^{-1/\alpha})^{d+2} \\
 & \leq - 2 \sigma_d(B(p,r_{N})) \sum_{1 \leq i < j \leq N}  K_s(x_{N,i}, x_{N,j}) + C_8 c^{d+2} N^{\frac{2d -\beta s + d s }{\beta} - \frac{d+2}{\alpha}}.
 \end{split}
 \end{equation}

Combining the bounds (\ref{eqt:bound_cent_ball}), (\ref{eqt:estim on integral}), and applying Theorem \ref{thm:Riesz Energy Asymptotics}, it follows that
\begin{equation} \label{bound:polarization}  
\begin{split}
    P_s(\omega_N) 
    & \leq \frac{1}{1 - N \sigma ( B(p, r_N))} \Big( N I_s( \sigma) - N\sigma(B(p, r_N)) \frac{2^s}{s c^s} N^{s/a} - 2 \sigma_d(B(p,r_{N})) \sum_{1 \leq i < j \leq N}  K_s(x_{N,i}, x_{N,j})  \\
    & + C_6 c^{d+2} N^{2 + (s+2)/\beta - (d+2)/\alpha}\Big)\\
    & \leq \frac{1}{1 - N \sigma ( B(p, r_N))} \Big( N I_s( \sigma) - N\sigma(B(p, r_N)) \frac{2^s}{s c^s} N^{s/a} - \sigma_d(B(p,r_{N}))\big( N^2 I_s(\sigma) - C_s N^{1+s/d} \big)  \\
    & + C_6 c^{d+2}  N^{2 + (s+2)/\beta - (d+2)/\alpha}\Big)\\
    & = N I_s( \sigma) + \frac{1}{1 - N \sigma ( B(p, r_N))} \Big( C_9 c^{d} N^{1-d/\alpha}  ( - \frac{2^s}{s c^s} N^{s/a}  + C_s N^{s/d})  + C_6 c^{d+2}  N^{2 + (s+2)/\beta - (d+2)/\alpha}\Big)\\
    \end{split}
 \end{equation}   

Since $\alpha \leq d$, the term $ - \frac{2^s}{s c^s} N^{s/a}  + C_s N^{s/d}$ is negative provided that $c$ is taken sufficiently small. Thus, if $\alpha 
\leq \frac{\beta(s+2)}{\beta + s+  2}$, the bound (\ref{bound:polarization}) takes the form
\begin{equation} \label{eqt:polarizationBoundSubharmonic}
 P_s(\omega_N)  \leq N I_s( \sigma) - C_{10} N ^{1 + \frac{s-d}{\alpha}}.
\end{equation}

If $\beta > d-1$, then using the estimate (\ref{eqt:estim on integral2}) in place of (\ref{eqt:estim on integral}), yields a stronger version of bound (\ref{eqt:polarizationBoundSubharmonic}) which holds for all $\alpha \leq \frac{\beta (s+2)}{(d-\beta)(s+1) + d}$. In particular, for well-separated configurations the bound holds for all $\alpha \leq s+2$.

\end{proof}

\section{Acknowledgements}
Ryan W. Matzke was supported by the NSF Postdoctoral Research Fellowship Grant 2202877. The authors would like to thank Carlos Beltr\'{a}n, Michelle Mastrianni, and Stefan Steinerberger for their helpful conversations.

\section{Appendix A. Polarization and Greedy Sequences on $\mathbb{S}^1$} \label{sec:circle}

Maximal polarization and the energy of greedy points in the case $d = 1$  has mostly been settled in the literature previously, as the unique properties of the circle yield convenient examples of point sets that maximize polarization and greedy sequences. In this section we collect known results on the circle for polarization and greedy energy.

\subsection{Polarization on the Circle}
 
In \cite[Theorem 1]{HarKS13} it is shown that equally spaced points on $\mathbb{S}^1$, denoted $\omega_N^*$, are optimal for polarization for any kernel  $K(x,y) = f( \arccos(\langle x, y \rangle))$ such that $f$ is decreasing and convex on $[0, \pi]$. This includes the Riesz kernels for $-1 \le s < \infty$, but not $s < -1$. However, for $-2 < s < -1$, we conjecture that $\omega_N^*$ maximizes polarization as well. 

For the range $-2 < s$, the points that minimize the discrete potential with respect to $\omega_N^*$ are the midpoints of the arcs joining adjacent $N$th roots of unity. Because these midpoints are themselves $2N$th roots of unity, there is a natural expression for the polarization on $\mathbb{S}^1$ in terms of the corresponding energies for $N$ and $2N$ equally spaced points:
\begin{equation}
\label{eqn:diffofenergies}
P_{s}(\omega_N^*) = \frac{E_{s}(\omega_{2N}^*)}{2N} - \frac{E_{s}(\omega_N^*)}{N},
\end{equation}
see \cite{AmbBE13}. Furthermore, there is an explicit formula for the energy of $N$ equally spaced points on $\mathbb{S}^1$.

\begin{theorem}[Theorem 1.1 in \cite{BraHS09}] 
\label{thm:zetafunction}
Let $s \in \mathbb{R}$, $s \neq 0,1,3,5,...,$ and let $q$ be any nonnegative integer such that $q \ge (s -1)/2$. If $\omega_N^{\star}$ is a configuration of $N$ equally spaced points on $\mathbb{S}^1$, then
$$E_{s}(\omega_N^*) = I_{s,1} N^2 + \frac{2 }{s(2\pi)^s} \sum_{n=0}^q a_n(s) \zeta(s-2n)N^{1+s-2n}+\mathcal{O}(N^{s-1-2q}),$$
where $\zeta(s)$ is the classical Riemann zeta function and $a_n(s)$ are the coefficients in the expansion
$$\Big(\frac{\sin \pi z}{\pi z}\Big)^{-s} = \sum_{n=0}^{\infty} a_n(s) z^{2n}, \hspace{1cm} |z| < 1.$$
\end{theorem}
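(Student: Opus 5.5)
This is the circle formula of Brauchart, Hardin and Saff \cite{BraHS09}. The plan is to write $E_s(\omega_N^*)$ as a lattice sum and apply an Euler--Maclaurin/Poisson-type analysis in which the singular part is handled by Hurwitz zeta functions. By rotational symmetry,
\[
E_s(\omega_N^*) = \frac{N}{s}\sum_{k=1}^{N-1}\Big(2\sin\frac{\pi k}{N}\Big)^{-s}.
\]
Writing $z=k/N$, I use the factorization
\[
\Big(2\sin \pi z\Big)^{-s} = (2\pi z)^{-s}\Big(\frac{\sin\pi z}{\pi z}\Big)^{-s} = (2\pi)^{-s}\sum_{n\ge 0} a_n(s)\, z^{2n-s},
\]
which is valid for $|z|<1$. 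Because $\sin\pi z=\sin\pi(1-z)$, I first split the sum symmetrically: each half-sum over $1\le k\le N/2$ (with the obvious adjustment at $k=N/2$) has $z\in(0,1/2]$, so the expansion converges uniformly there.

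Next I truncate the series at $n=q$. Let $R_q(z)=\sum_{n>q}a_n(s)z^{2n-s}$. The condition $q\ge (s-1)/2$ makes $2n-s>-1$ for every $n>q$, so $R_q$ is integrable near $0$ and, together with its reflection, defines a function on $[0,1]$ of mild singular type. The truncated terms give power sums $\sum_{k} k^{2n-s}$ times $N^{s-2n}$. For these I use the asymptotic expansion of $\sum_{k=1}^{M}k^{-\alpha}$ through the Hurwitz/Riemann zeta function,
\[
\sum_{k=1}^{M}k^{-\alpha}=\zeta(\alpha)+\frac{M^{1-\alpha}}{1-\alpha}+\tfrac12 M^{-\alpha}+\cdots
\]
This is valid for $\alpha\ne1$, which is why $s\neq 1,3,5,\dots$ is excluded: then $s-2n\neq 1$.

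The $\zeta(s-2n)$ constants produce exactly the terms $\frac{2}{s(2\pi)^s}a_n(s)\zeta(s-2n)N^{1+s-2n}$. The factor $2$ comes from the two symmetric halves and $N\cdot N^{s-2n}$ gives the power. All the remaining pieces, namely the $M^{1-\alpha}$ terms, the boundary terms at $k\approx N/2$, and the sum of $R_q(k/N)$, combine into a Riemann sum of $(2\sin\pi z)^{-s}$ over $(0,1)$.

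The main obstacle is to show that these leftover contributions add up to exactly $I_{s,1}N^2$ with error $\mathcal O(N^{s-1-2q})$. First, the midpoint boundary contributions from the two halves cancel by symmetry. For the rest, I would apply the generalized Euler--Maclaurin formula (or the Navot extension for integrands with algebraic endpoint singularities) to $g(z)=(2\sin\pi z)^{-s}-(2\pi)^{-s}\sum_{n\le q}a_n(s)\big(z^{2n-s}+(1-z)^{2n-s}\big)$. This function is periodic-compatible and has endpoint behavior $O(z^{2q+2-s})$, so the trapezoidal rule error is $\mathcal O(N^{-(2q+2-s)})$ after accounting for smoothness. The integral term $\frac{N^2}{s}\int_0^1(2\sin\pi z)^{-s}dz=I_{s,1}N^2$ is the leading term. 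Multiplying the error by $N$ yields $\mathcal O(N^{s-1-2q})$.

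Finally, at $s<0$ integer or other special values I need to check analytic continuation in $s$. Both sides are analytic in $s$ away from the excluded set, so the identity extends.
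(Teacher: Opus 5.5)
The paper does not prove this statement; it quotes it from \cite{BraHS09}. Your framework is the natural one: expand $(\sin\pi z/\pi z)^{-s}$, turn the truncated part into power sums, and read off $\zeta(s-2n)$. You also get the coefficient $\frac{2}{s(2\pi)^s}a_n(s)\zeta(s-2n)N^{1+s-2n}$ and the role of $s\neq 1,3,5,\dots$ right.

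\textbf{The gap.} It is in the step you call the main obstacle. Your function $g$ is \emph{not} $O(z^{2q+2-s})$ at $z=0$. Near $0$ it is
\[
g(z)=(2\pi)^{-s}\sum_{n>q}a_n(s)\,z^{2n-s}-(2\pi)^{-s}\sum_{n\le q}a_n(s)\,(1-z)^{2n-s},
\]
and the reflected terms are smooth but not zero there. So in general $g(0)=g(1)=-(2\pi)^{-s}\sum_{n\le q}a_n(s)\neq 0$, and $g'(0^+)=(2\pi)^{-s}\sum_{n\le q}(2n-s)a_n(s)=-g'(1^-)\neq 0$. The periodic extension therefore neither vanishes nor is $C^1$ at the integers. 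Consequently
\[
\frac1N\sum_{k=1}^{N-1}g(k/N)=\int_0^1 g\,dz-\frac{g(0)}{N}+O(N^{-2}).
\]
After multiplying by $N^2/s$, this is off by terms of order $N$ and $1$, not $O(N^{s-1-2q})$ (note $s-1-2q\le 0$).

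These terms are genuine. They cancel exactly against the lower-order terms in
\[
\sum_{k=1}^{N-1}k^{2n-s}=\zeta(s-2n)+\frac{N^{2n-s+1}}{2n-s+1}-\tfrac12 N^{2n-s}+\cdots,
\]
which your sketch never tracks: only the $M^{1-\alpha}$ terms are routed into the main term. If you combined your stated estimate for $g$ with this expansion, you would be left with a spurious term $-\frac{N}{s(2\pi)^s}\sum_{n\le q}a_n(s)$. Showing this cancellation through order $N^{s-1-2q}$ is the actual content beyond the formal expansion. The remark that midpoint terms cancel ``by symmetry'' does not supply it.

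\textbf{A clean repair.} Write $F(z)=(2\sin\pi z)^{-s}=z^{-s}\psi(z)$ with $\psi(z)=(2\pi)^{-s}\sum_n a_n(s)z^{2n}$. Split $F=F\chi+F(1-\chi)$, where $\chi$ is a smooth cutoff equal to $1$ near $0$ and $0$ near $1$. Then apply Navot's generalized Euler--Maclaurin formula to each piece, the second after the substitution $z\mapsto 1-z$; when $s>1$ the integral must be taken as a Hadamard finite part. Because $F$ has no smooth component at its singular point (since $\psi$ is even), the endpoint terms are exactly $(2\pi)^{-s}a_n(s)\zeta(s-2n)N^{s-2n-1}$, and no integer powers of $N$ survive. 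Truncating at $n=q$ then gives the error $O(N^{s-1-2q})$.

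\textbf{The leading coefficient.} For $s\ge 1$ the integral $\int_0^1(2\sin\pi z)^{-s}\,dz$ diverges, so $I_{s,1}$ is undefined there as written. The constant your argument actually produces is
\[
\int_0^1 g\,dz+2(2\pi)^{-s}\sum_{n\le q}\frac{a_n(s)}{2n+1-s}.
\]
You need to say that this equals $sI_{s,1}$ for $s<1$ and is its analytic continuation beyond.

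\textbf{The closing analytic-continuation remark.} Continuing the whole asymptotic identity analytically in $s$ is unnecessary, since the argument works directly for each admissible real $s$. It is also insufficient as stated: an $O(\cdot)$ remainder only continues if the bounds are locally uniform in $s$. Finally, the values of $s$ that need care are $s\ge 1$, not the negative integers.
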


Combining Theorem \ref{thm:zetafunction} with \eqref{eqn:diffofenergies}, we have that on $\mathbb{S}^1$ for $-2 < s < 1$, $s \neq 0$, 
\begin{equation}
\label{eqn:expansion}
P_{s}(\omega_N^*) = I_{s,1}  N + \frac{2}{s(2\pi)^s} \zeta(s) N^s (2^s-1) + \mathcal{O}(N^{s-2}).
\end{equation}

This second order term was also explicitly computed in \cite[Proposition 4.1]{LopM22} and \cite[Lemma 3.10]{LopM21}. In \cite[page 623]{BraHS09}, the authors also show that 
\begin{equation}\label{eq:Log Energy equally spaced points}
E_{0}(\omega_N^*) = - N \log N
\end{equation}
which, when combined with \eqref{eqn:diffofenergies}, gives us that
\begin{equation}\label{eqn:expansion log Pol circle}
P_{0}(\omega_N^*) = - \log 2.
\end{equation}

Both \eqref{eqn:expansion} and \eqref{eqn:expansion log Pol circle} yield a lower bound on maximal polarization $\mathcal{P}_{s}(N) \ge P_{s}(\omega_N^*)$, which we conjecture to be an equality for $-2 < s < - 1$. This is true in the case $-1 \leq s$, where optimality of roots of unity, \cite[Theorem 1]{HarKS13} shows that $\mathcal{P}_{s}(N) = P_{s}(\omega_N^*)$.

\subsection{Energy for Greedy Sequences}\label{sec:greedyenergyS1}

The behavior of greedy sequences on $\mathbb{S}^1$ is also well-studied. For $-2 < s$, it is known that any greedy sequence is in fact a classical van der Corput sequence \cite{vdC35} (see \cite[Theorem 5]{BiaCC12}, \cite[Lemma 3.7]{LopM21}, \cite[Sec 1.2]{LopW15}, \cite[Lemmas 4.1 and 4.2]{LopS10}, and also \cite[Example 2]{Wol97}, which is perhaps the earliest observation of this kind, but just for $s=-1$). A similar result was shown for a large class of kernels  (whenever $K(x,y) = f( \arccos( \langle x , y \rangle))$, and $f$ is a bounded, continuous, decreasing, convex function of the geodesic distance) in \cite[Thm 2.1]{Pau21}.  The above mentioned results make explicit computations for bounds of the asymptotic behavior of the greedy Riesz energies possible. Here we collect these results.
\begin{theorem}\label{thm:Energy of Greedy points Circle}
On the circle $\mathbb{S}^1$, for $-2 < s < 1$ and $N \geq 2$,  if $\omega_N$ is the first $N$ elements of a greedy Riesz $s$-energy sequence, then there exist positive constants $C_{s,1}, C_{s,1}'$ such that the following hold for $N \geq 2$. 

\noindent For $0 < s < 1$, 
\begin{equation}\label{eq:Greedy Energy Circle 0 < s < 1}
- C_{s,1} N^{s+1} \leq E_{s}(\omega_N) - I_{s,1} N^2 \leq - C_{s,1}' N^{s+1}.
\end{equation}
For $s = 0$, 
\begin{equation}\label{eq:Greedy Energy Circle s=0}
0  \leq E_{0}(\omega_N) +  N \log N \leq \log(4/3) N.
\end{equation}
For $-1 < s < 0$,
\begin{equation}\label{eq:Greedy Energy Circle -1 < s < 0}
C_{s,1} N^{s+1} \leq E_{s}(\omega_N) - I_{s,1} N^2 \leq C_{s,1}' N^{s+1}.
\end{equation}
For $s = -1$
\begin{equation}\label{eq:Greedy Energy Circle s=-1}
C_{1,1}  \leq E_{1}(\omega_N) - I_{1,1} N^2 \leq C_{1,1}' \log N.
\end{equation}
For $-2 < s < -1$
\begin{equation}\label{eq:Greedy Energy Circle -2 < s < -1}
C_{s,1} N^{s+1} \leq E_{s}(\omega_N) - I_{s,1} N^2 \leq  C_{s,1}'.
\end{equation}
In each case, the order of the bounds cannot be improved.
\end{theorem}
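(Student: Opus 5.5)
The plan is to use the fact, recalled just above, that for $-2<s$ any greedy sequence on $\mathbb{S}^1$ is a van der Corput sequence, up to rotation and reflection. Combined with the greedy identity \eqref{eq:Gr Energy POlarization relation}, this reduces everything to explicit computations with equally spaced points. Write $N=\sum_i 2^{k_i}$ in binary, with $k_1>k_2>\dots>k_m$. The first $N$ van der Corput points split into disjoint blocks. Each block is a rotated copy of the $2^{k_i}$-th roots of unity, and the rotations are nested dyadically, so each later block sits at midpoints of gaps of the earlier ones.

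Accordingly, the energy splits as
\[
E_s(\omega_N)=\sum_i E_s(\omega^*_{2^{k_i}})+2\sum_{i<j} \sum_{y\in B_j} U_i(y),
\qquad U_i(y):=\sum_{x\in B_i}K_s(x,y),
\]
where $B_i$ is the $i$-th block. The self-energies are given by Theorem \ref{thm:zetafunction} (and by \eqref{eq:Log Energy equally spaced points} when $s=0$). For the cross terms, the potential $U_i$ of a rotated block of $2^{k}$ roots of unity is a $2\pi/2^k$-periodic function. Its value at the points of a finer block $B_j$ is controlled: the average of $U_i$ over the $2^{k_j}$ points of $B_j$, which form a coset of the $2^{k_j}$-th roots, equals the difference of root-of-unity energies, in the spirit of \eqref{eqn:diffofenergies}. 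Concretely, the average over a full coset of $2^{k_j}$-th roots of the potential of $2^{k_i}$ roots can be written via Theorem \ref{thm:zetafunction} as $I_{s,1}2^{k_i}+O(2^{k_i s})$, plus a term depending on the relative position of the cosets. That relative position is a fixed shift by half a step at the appropriate scale.

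Next, sum these expansions. The leading terms add up to $I_{s,1}\big(\sum_i 2^{k_i}\big)^2=I_{s,1}N^2$, and the second-order terms give
\[
\sum_i c_s 2^{k_i(1+s)}+\sum_{i<j}O\big(2^{k_j}2^{k_i s}\big).
\]
For $s>-1$ the sums are geometric and dominated by the largest block, giving the order $N^{1+s}$ with the sign of $\zeta(s)/s$. Since $\zeta(s)<0$ on $(-2,1)$, this coefficient is negative for $0<s<1$ and positive for $-1<s<0$. For $s=-1$, all $\log_2 N$ binary digits contribute $O(1)$ terms, which gives the $\log N$ upper bound. For $-2<s<-1$ the terms decay, so the upper bound is a constant. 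For $s=0$ one tracks $-2^k\log 2^k$ exactly; the constant $\log(4/3)$ should come out of an optimisation over binary patterns of the worst case, for example $N=2^k+2^{k-1}$-type alternating digits.

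The lower bounds for $0<s<1$ and $-1<s<0$ follow from Theorem \ref{thm:Riesz Energy Asymptotics} with $d=1$, since greedy energy is at least the minimal energy. The remaining cases, $s=0$ and $s\le -1$, use the same comparison with $\mathcal{E}_s(N)$, which is attained by the roots of unity.

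The main obstacle is the cross-term bookkeeping. One has to show that the second-order contributions keep a definite sign, or at least that the top block dominates uniformly in the binary pattern of $N$. This is where I would rely on the explicit estimates already proved in \cite{LopM21, LopS10, LopW15} rather than redo them. Sharpness of orders is shown by taking $N=2^k$, where the greedy set equals the roots of unity, and for $s=-1$ by $N$ with alternating binary digits.
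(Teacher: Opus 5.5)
Your argument ends up being the paper's. The paper proves nothing new for this statement: it quotes the upper bounds from \cite{LopW15} and \cite{LopM21}, and gets every lower bound from $\mathcal{E}_s(N)\le E_s(\omega_N)$ together with the optimality of equally spaced points \cite{CohK07} and Theorem~\ref{thm:zetafunction}. Your lower-bound paragraph is exactly this, and you defer the upper bounds to the same papers, so as a proof by citation it is as complete as the paper's.

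The sketch you give before deferring would not work as written, though. Also, the ``main obstacle'' you name disappears once the block structure is used exactly.

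\textbf{The midpoint property.} The blocks $B_j$ with $j>i$ are coarser than $B_i$, not finer. Their spacing $2\pi/2^{k_j}$ is a multiple of the period $2\pi/2^{k_i}$ of $U_i$, and every point of $B_j$ lies exactly at a midpoint of $B_i$. This holds for every greedy sequence, and it is all you need. Note that greedy sequences need not be rotations or reflections of van der Corput's: after $0,\frac12,\frac14,\frac34$ (in units of a full turn), any of the four midpoints may come next. So there is no averaging and no position-dependent correction: $U_i\equiv\min U_i=P_s(\omega^*_{2^{k_i}})$ on $B_j$. This is precisely the energy difference in \eqref{eqn:diffofenergies}, as your first instinct suggested.

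\textbf{Why the unsigned bound fails.} Writing the cross terms as $O(2^{k_j}2^{k_is})$ throws away the needed information. Summed over $j>i$, they can be as large as $2^{k_i(1+s)}$, the order of the diagonal term of block $i$, including for $i=1$. So they are not dominated by $c_s2^{k_1(1+s)}$, and without a sign you cannot reach $-C'_{s,1}N^{s+1}$ for $0<s<1$.

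\textbf{The fix.} A minimum is at most the mean, so $P_s(\omega^*_n)\le I_{s,1}n$. With $R_i=\sum_{j>i}2^{k_j}$ this gives
\[
E_s(\omega_N)-I_{s,1}N^2=\sum_i\bigl(E_s(\omega^*_{2^{k_i}})-I_{s,1}4^{k_i}\bigr)+2\sum_iR_i\bigl(P_s(\omega^*_{2^{k_i}})-I_{s,1}2^{k_i}\bigr)\le\sum_i\bigl(E_s(\omega^*_{2^{k_i}})-I_{s,1}4^{k_i}\bigr).
\]
Theorem~\ref{thm:zetafunction} now yields all four upper bounds for $s\neq0$:
\begin{enumerate}
\item for $0<s<1$, every summand is negative since $\mathcal{E}_s(n)\le n(n-1)I_{s,1}$, so keep only $i=1$;
\item for $-1<s<0$, sum a geometric series;
\item for $s=-1$, there are at most $\log_2N+1$ bounded summands;
\item for $-2<s<-1$, the series converges.
\end{enumerate}

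\textbf{The sign of the cross terms.} By \eqref{eqn:expansion}, the cross terms have the sign of $\zeta(s)(2^s-1)/s$, which is negative on all of $(-2,1)\setminus\{0\}$. So for $s<0$ they oppose the diagonal terms rather than sharing the sign of $\zeta(s)/s$. Since $R_i<2^{k_i}$, the $i$-th second-order contribution
\[
\frac{2\zeta(s)}{s(2\pi)^s}\,2^{k_is}\bigl(2^{k_i}+2(2^s-1)R_i\bigr)
\]
stays comparable to $2^{k_i(1+s)}$ for every binary pattern exactly when $s>-1$.

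\textbf{The case $s=0$.} Here the identity is exact:
\[
E_0(\omega_N)=-\log2\sum_i\bigl(k_i+2(i-1)\bigr)2^{k_i}.
\]
The bound $\log(4/3)N$ is approached along the alternating-digit values $N=(4^r-1)/3$. By contrast, $N=2^k+2^{k-1}$ only gives $\frac13\log\frac{27}{16}\,N$.

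\textbf{Sharpness.} Your examples miss the constant upper bound for $-2<s<-1$. There, $N=2^k+1$ gives $E_s(\omega_N)-I_{s,1}N^2\to-I_{s,1}>0$.
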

The bounds for \eqref{eq:Greedy Energy Circle 0 < s < 1} and \eqref{eq:Greedy Energy Circle s=0} are provided in  \cite[Theorems 1.1, 1.2, 1.5]{LopW15}. The bounds for \eqref{eq:Greedy Energy Circle -1 < s < 0} were shown in \cite[Theorem 3.16]{LopM21}. The upper bounds of \eqref{eq:Greedy Energy Circle -2 < s < -1} and \eqref{eq:Greedy Energy Circle s=-1} are shown in \cite[Theorems 3.17, 3.18]{LopM21}. In each of the five cases, the lower bounds follow from the fact that $\mathcal{E}_{s}(N) \leq E_{s}(\omega_N)$, and the value of the minimal energy is known 
\begin{theorem}[Theorem 1.2 in \cite{CohK07}]
If $-2 < s < 1$, then on $\mathbb{S}^1$, $\mathcal{E}_s(N) = E_s(\omega_N)$ if and only if $\omega_N$ is a set of equally spaced points.  
\end{theorem}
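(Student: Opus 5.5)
The plan is to follow the linear programming method of Cohn and Kumar, specialised to $\mathbb{S}^1$. First reduce to a single family of model kernels, then certify optimality with an interpolating polynomial, and finally extract uniqueness from the equality cases.

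\textbf{Step 1: reduction to model kernels.} Write the kernel as a function of $r=\|x-y\|^2=2-2\langle x,y\rangle$, so that $K_s=g_s(r)$ with $g_s(r)=\frac1s r^{-s/2}$.
\begin{itemize}
\item For $0<s<1$, $g_s$ is completely monotone on $(0,4]$. By Bernstein's theorem it is a positive mixture of the exponentials $r\mapsto e^{-\alpha r}$, $\alpha>0$.
\item For $-2<s<0$, the derivative $-g_s'(r)=\frac12 r^{-s/2-1}$ is completely monotone. Hence $g_s(r)=a-br+\int_0^\infty (e^{-\alpha r}-1+\alpha r)\,\frac{d\nu(\alpha)}{\alpha}$, up to positive mixtures, with $b\ge 0$ and $\nu\ge 0$.
\item Since $\sum_{i\neq j}\langle x_i,x_j\rangle=\|\sum x_i\|^2-N$, the affine part is minimised exactly when $\sum x_i=0$, and the regular $N$-gon achieves this.
\end{itemize}
It therefore suffices to show two things for every Gaussian kernel $f_\alpha(t)=e^{-\alpha(2-2t)}$, $t=\langle x,y\rangle$: that the regular $N$-gon minimises $E_{f_\alpha}$, and that it does so with a common equality certificate (the same polynomial contact set $\{\cos(2\pi k/N)\}$). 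The logarithmic case $s=0$ is handled the same way, using $-\frac12\log r=\lim_{s\to0}(g_s(r)-\frac1s)$, or directly since $-\frac{d}{dr}\log r$ is completely monotone.

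\textbf{Step 2: LP certificate.} For $f=f_\alpha$, take $h$ to be the polynomial of degree at most $N-1$ obtained by Hermite interpolation of $f$ at the points $t_k=\cos(2\pi k/N)$, $k=1,\dots,\lfloor N/2\rfloor$. Interpolate to second order at the interior nodes and to first order at $t=-1$ when $N$ is even. Expand $h=\sum_{m\ge0}h_mT_m$ in Chebyshev polynomials, which are the Gegenbauer polynomials for $d=1$.

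The complete (absolute) monotonicity of $f_\alpha$ in $t$, combined with the Hermite remainder formula, gives $h\le f$ on $[-1,1]$. The positive-definiteness inequality $\sum_{i,j}T_m(\langle x_i,x_j\rangle)\ge0$ then yields
\[E_f(\omega_N)\ \ge\ \sum_{i\ne j}h(\langle x_i,x_j\rangle)\ \ge\ N^2h_0-Nh(1),\]
provided $h_m\ge0$ for all $m\ge1$. For the $N$-gon every inequality above is an equality: its inner products lie in $\{t_k\}$, and $\sum_{i,j}T_m=0$ for $1\le m\le N-1$.

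\textbf{Step 3: uniqueness.} Suppose $\omega_N$ attains $\mathcal{E}_s(N)$. Equality in the mixture argument forces equality for almost every $\alpha$. Equality in Step 2 then forces two conditions: every $\langle x_i,x_j\rangle$, $i\neq j$, lies in $\{t_k\}$; and $\sum_{i}e^{im\theta_i}=0$ whenever $h_m>0$. If $h_m>0$ for $1\le m\le N-1$, the moment conditions say the points form a design of strength $N-1$ on the circle, which forces equally spaced points. Alternatively, the distance condition together with $|\sum x_i|=0$ gives the same conclusion by an elementary angle-counting argument.

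\textbf{Main obstacle.} The key step is proving $h_m\ge 0$, strictly for $1\le m\le N-1$, for the Hermite interpolant of an absolutely monotone function at the nodes $\cos(2\pi k/N)$. I would first try the Cohn--Kumar route. Write $h$ via divided differences as a positive combination of the partial products $\prod_j (t-t_j)^{e_j}$. Then show, using the product formula $T_aT_b=\frac12(T_{a+b}+T_{|a-b|})$ and the fact that the nodes are the roots of $T_N-1$ (factorisations like $(T_N(t)-1)/(t-1)$), that each partial product has nonnegative Chebyshev coefficients. This is the step where the specific geometry of the regular polygon enters, and it is the step most likely to need care.
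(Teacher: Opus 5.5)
The paper gives no proof of this statement; it is quoted from Cohn--Kumar. Your outline (Hermite interpolation at the inner products of the regular $N$-gon, the linear programming bound, then analysis of the equality case) is exactly their argument specialised to $\mathbb{S}^1$.

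The genuine gap is the step you flag yourself: nothing in the proposal proves $h_m\ge 0$ for $m\ge 1$, and the route you sketch does not reach it.
\begin{itemize}
\item The identity $(T_N(t)-1)/(t-1)=N+2\sum_{j=1}^{N-1}(N-j)T_j(t)$ proves positive definiteness only of the \emph{full} nodal polynomial $(t+1)^{\epsilon}\prod_{0<k<N/2}(t-t_k)^2$, where $\epsilon=1$ for even $N$ and $0$ for odd $N$. That polynomial enters only the remainder term. The Newton partial products that actually make up $h$ are proper sub-products of degree at most $N-2$, and the identity does not control them.
\item The order in which the nodes are listed is decisive, and you leave it open. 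If you start from the largest node, already for $N=5$ the partial product $(t-\cos\frac{2\pi}{5})^2=\frac12T_2-2\cos\frac{2\pi}{5}\,T_1+(\cos^2\frac{2\pi}{5}+\frac12)T_0$ has a negative $T_1$-coefficient, so the termwise argument breaks down.
\end{itemize}

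What is missing is a positivity lemma of the kind on which the Cohn--Kumar argument rests.
\begin{itemize}
\item \emph{Node order and orthogonal families.} List the nodes in increasing order, starting nearest $-1$. For odd $N$ they are the zeros of $W_{(N-1)/2}$, where $W_n(\cos\theta)=\sin((n+\frac12)\theta)/\sin\frac{\theta}{2}=T_0+2\sum_{j=1}^nT_j$ is orthogonal for the weight $(1-t)(1-t^2)^{-1/2}$. For even $N$, the nodes other than $-1$ are the zeros of $U_{N/2-1}$ (weight $(1-t^2)^{1/2}$), and every $U_n$ is a nonnegative combination of the $T_j$.
\item \emph{The lemma.} If $r_1<\dots<r_n$ are the zeros of $p_n$ in an orthogonal family, then $q_j=\prod_{i\le j}(t-r_i)$ is a nonnegative combination of $p_0,\dots,p_j$.
\item \emph{Proof sketch.} Christoffel--Darboux gives $p_n(t)/(t-r_n)=c\,K_{n-1}(t,r_n)$ with $c>0$, and $p_l(r_n)>0$ for $l<n$. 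Pass to the Gauss quadrature measure $\mu_G$ at $r_1,\dots,r_n$, and then to $(r_n-t)\,d\mu_G$, which is supported on $r_1,\dots,r_{n-1}$ and whose orthogonal polynomials are the kernels $K_l(\cdot,r_n)$. This lets you induct on $n$.
\item \emph{Conclusion.} Every $q_j$ then has nonnegative $T$-coefficients, including the constant one. The Newton partial products are $q_j^2$ and $q_jq_{j+1}$ (times $t+1$ for even $N$), and they inherit this property from $T_aT_b=\frac12(T_{a+b}+T_{|a-b|})$. Divided differences of order $\ge1$ are positive, and the zeroth one only affects $h_0$.
\end{itemize}

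Two smaller slips.
\begin{itemize}
\item For $-2<s<0$, your integrand $e^{-\alpha r}-1+\alpha r$ makes the integral diverge, since here $d\nu\propto\alpha^{s/2}d\alpha$. Its linear part also points the wrong way, because the $N$-gon \emph{maximises} $\sum_{i\neq j}\|x_i-x_j\|^2$. Use $g_s(r)=\int_0^\infty(e^{-\alpha r}-1)\alpha^{-1}\,d\nu(\alpha)$ instead, or simply work with $F(t)=g_s(2-2t)$, which satisfies $F^{(k)}>0$ for $k\ge1$.
\item Since $\deg h\le N-2$, you have $h_{N-1}=0$, so your first uniqueness route fails as stated. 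The second route suffices on its own. All pairwise angle differences lie in $\frac{2\pi}{N}\mathbb{Z}$, and $f(1)>h(1)$ rules out coincident points. Hence the $N$ distinct points lie in one coset of $\frac{2\pi}{N}\mathbb{Z}$, which has exactly $N$ elements, so they fill it.
\end{itemize}
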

Combining this with Theorem \ref{thm:zetafunction} yields the lower bounds in Theorem \ref{thm:Energy of Greedy points Circle}.

\subsection{Polarization for Greedy Sequences}

As discussed earlier, the fact that any greedy sequence is uniformly distributed suggests that they may behave well for different measures of uniformity. For polarization, the asymptotic behavior of a greedy sequence for Riesz kernels on the circle was shown in \cite[Theorem 3.11]{LopM21} and \cite[Theorems 1.1 and 1.4]{LopM22} (the case of hypersingular Riesz kernels was also handled in \cite{LopM22}  
{ and the case of $s \leq -2$ in \cite[Theorems 4.1 and 5.2]{LopM21}}).

\begin{theorem}\label{thm:Polar of Greedy points Circle}
On the circle $\mathbb{S}^1$, for $-2 < s < 1$ and $N \geq 1$, if $\omega_N$ is the first $N$ elements of a greedy sequence, then there exist positive constants $d_{s,1}, d_{s,1}'$, such that\\
For $0 < s < 1$,
\begin{equation}\label{eq:Polarization Circle 0 < s < 1}
- d_{s,1} N^{s} \leq P_{s}(\omega_{N}) - I_{s,1} N \leq - d_{s,1}' N^{s}.
\end{equation}
For $s = 0$, 
\begin{equation}\label{eq:Polarization Circle s=0}
-  \log(N+1) \leq P_{0}(\omega_{N}) - I_{0,1} N \leq 0.
\end{equation}
For $-2 < s < 0$,
\begin{equation}\label{eq:Polarization Circle -1 < s < 0}
- I_{s,1} \leq P_{s}(\omega_{N}) - I_{s,1} N \leq 0.
\end{equation}

\end{theorem}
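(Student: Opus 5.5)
The plan is to exploit the known structure of greedy sequences on $\mathbb{S}^1$. For $-2<s<1$ any greedy sequence is, up to rotation, a van der Corput sequence (see the discussion in Section~\ref{sec:greedyenergyS1}). So we first identify $\omega_N$ explicitly. Writing $2^{n}\le N<2^{n+1}$, $\omega_N$ consists of the $2^n$ equally spaced points $\omega^*_{2^n}$ together with $m=N-2^n$ points taken from the midpoints, i.e.\ from $\omega^*_{2^{n+1}}\setminus\omega^*_{2^n}$, in bit-reversed order. The polarization $P_s(\omega_N)$ is attained at the next greedy point $x_{N+1}$, which again lies in $\omega^*_{2^{n+1}}$. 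The computation then reduces to evaluating potentials at roots of unity.

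First I will handle the dyadic case $N=2^n$. By \eqref{eqn:diffofenergies} and \eqref{eqn:expansion} we get $P_s(\omega^*_{N})=I_{s,1}N+\frac{2}{s(2\pi)^s}\zeta(s)(2^s-1)N^s+\mathcal O(N^{s-2})$. For $0<s<1$ the coefficient $\zeta(s)(2^s-1)/s$ is negative, which gives the two-sided bound $-d\,N^s\le P_s-I_{s,1}N\le -d'N^s$. For $-2<s<0$ the sign of this coefficient makes the correction nonnegative, and $P_s\le \mathcal P_s(N)\le I_{s,1}N$ for $s\ge -1$ by averaging. For $s=0$, \eqref{eqn:expansion log Pol circle} gives $P_0=-\log 2$ (here $I_{0,1}=0$).

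For general $N$ I will write $\omega_N=\omega^*_{2^n}\cup A_m$, where $A_m$ is the set of $m$ added midpoints. The potential at a point $x\in\omega^*_{2^{n+1}}\setminus\omega_N$ splits as $U^{\omega^*_{2^n}}(x)+U^{A_m}(x)$. The first term equals the dyadic value $P_s(\omega^*_{2^n})$ from the previous step. For the second term, the bit-reversal structure shows that $A_m$ is itself a union of rotated equally spaced blocks of sizes $2^{k}$ given by the binary digits of $m$. Each block contributes a quantity computable by \eqref{eqn:diffofenergies}-type identities, namely a rotated-roots-of-unity potential evaluated at a point at a known offset. Summing over the binary digits gives a sum of at most $\log_2 N$ terms of the form $I_{s,1}2^k+c_s 2^{ks}$ plus lower-order errors.

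The upper bounds then follow because the greedy choice minimizes the potential: it suffices to exhibit one midpoint whose potential is at most $I_{s,1}N-d'N^s$, and the point chosen by the dyadic structure does this. The lower bounds follow by summing the block contributions. For $s>0$ we have $\sum_k 2^{ks}\lesssim N^s$, which is geometric. For $s=0$ each block contributes at most $-\log 2$-type constants, so at most $\log_2 N$ digits give the $-\log(N+1)$ bound. For $s<0$ the sum of $|c_s|2^{ks}$ is bounded by a constant, and I will track that constant as $I_{s,1}$.

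The main obstacle is the bookkeeping of the rotated partial blocks: determining exactly where the next greedy point sits relative to each block, and showing that the signed contributions do not cancel the leading $N^s$ term in the $0<s<1$ upper bound. I would first try comparing with the dyadic bound at $2^{n+1}$ using monotonicity of the potential as points are added, and fall back on the explicit computations in \cite{LopM21, LopM22} if needed.
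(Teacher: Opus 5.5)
The paper gives no argument of its own for this statement. It cites \cite[Theorem 3.11]{LopM21} and \cite[Theorems 1.1 and 1.4]{LopM22}, whose arguments use the dyadic structure of greedy sequences that you describe. Your outline follows that route, but it leaves open the one step that makes the route work, and the fallback you suggest for that step fails.

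\textbf{The missing step: where $x_{N+1}$ sits.} Write $N=2^{k_1}+\dots+2^{k_r}$ with $k_1>\dots>k_r$. Then $\omega_N$ is a union of blocks $B_j$, each a rotated copy of $\omega^*_{2^{k_j}}$. This holds for every greedy sequence, including those that are not rotations of van der Corput because of branching choices. Since the $k_j$-th binary digit of $N$ is $1$, the point $x_{N+1}$ lies at exactly half the spacing from every $B_j$. For van der Corput this reads $\phi(N)\equiv\phi(a_j)+2^{-k_j-1}$ modulo $2^{-k_j}$, where $a_j=2^{k_1}+\dots+2^{k_{j-1}}$. By symmetry each block then contributes exactly $p_s(2^{k_j})$, where $p_s(M):=E_s(\omega^*_{2M})/(2M)-E_s(\omega^*_M)/M$. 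Hence $P_s(\omega_N)=\sum_{j=1}^r p_s(2^{k_j})$ exactly.

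Without this identity you get neither the lower bounds, which need the value at the actual minimizer $x_{N+1}$, nor the non-cancellation in the $0<s<1$ upper bound. With it, every term $p_s(2^k)-I_{s,1}2^k$ is $\le 0$ and nothing cancels.

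Comparing with $2^{n+1}$ by monotonicity cannot replace the identity. For $s>0$ it only gives $P_s(\omega_N)-I_{s,1}N\le I_{s,1}(2^{n+1}-N)+O(N^s)$, which is of order $+N$ unless $N$ is close to $2^{n+1}$. For $s\le 0$ monotonicity either runs the wrong way or is unavailable.

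\textbf{The three cases, given the identity.} The case $s=0$ is immediate: $p_0(2^k)=-\log 2$ and $2^r\le N+1$. For $0<s<1$, each $p_s(M)-I_{s,1}M$ lies in $[-aM^s,-bM^s]$: it is strictly negative by averaging and $\sim c_sM^s$ with $c_s<0$. Combine this with $2^{k_1s}\le\sum_j 2^{k_js}\le 2^{k_1s}/(1-2^{-s})$.

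Your treatment of $-2<s<0$ needs two corrections.
\begin{enumerate}
\item The coefficient $\frac{2}{s(2\pi)^s}\zeta(s)(2^s-1)$ is \emph{negative} on $(-2,0)$ too; for instance it equals $-\pi/6$ at $s=-1$. It must be, because averaging gives $P_s(\omega)\le I_{s,1}N$ for every configuration and every $s$, with no restriction $s\ge -1$. Your claim that the correction is nonnegative contradicts the averaging bound you invoke in the same sentence.
\item Summing asymptotic expansions only yields an unspecified $O(1)$, not the constant in the statement. The exact constant comes from telescoping. Set $e(M):=E_s(\omega^*_M)/M-I_{s,1}M$. Then $p_s(M)-I_{s,1}M=e(2M)-e(M)\le 0$, so adding the missing nonpositive terms gives
\[
\sum_j\bigl(p_s(2^{k_j})-I_{s,1}2^{k_j}\bigr)\ \ge\ \sum_{k=0}^{k_1}\bigl(e(2^{k+1})-e(2^k)\bigr)\ =\ e(2^{k_1+1})+I_{s,1}\ \ge\ I_{s,1}.
\]
This uses $e(1)=-I_{s,1}$ and $E_s(\omega^*_M)\ge I_{s,1}M^2$ from Theorem \ref{thm:Riesz Energy Asymptotics}.
\end{enumerate}

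Under the paper's normalization $I_{s,1}<0$ for $s<0$. So the correct lower bound is $I_{s,1}$; the printed $-I_{s,1}$ is a sign slip, since it makes the interval $[-I_{s,1},0]$ empty. ``Tracking the constant as $I_{s,1}$'' has to be done with this sign.
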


Comparing this to (\ref{eqn:expansion}) and (\ref{eqn:expansion log Pol circle}), we see that the greedy sequences on the circle have good second-order asymptotic behavior for $0 < s$, but not for $-2 < s \leq 0$.

\section{Appendix B}

\subsection{Green Energies on Geodesic Balls}
In this section, we summarize the results of Beltr\'{a}n and Lizarte \cite{BelL23} on the mean value of the logarithmic and Green energies over geodesic balls.
First, recall that the geodesic ball in $\S^d$ centered at $p_0 \in \S^d$ with radius $a>0$ is denoted by
$B(p_0,a) =\{p \in \S^d : d_{R}(p_0, p) < a\} \subset \S^d$ where $d_{R}(\cdot, \cdot) = \arccos \langle\cdot, \cdot\rangle$ is the Riemannian distance on the unit sphere. By $\sigma(B(p_0,a))$ we denote the volume of this ball.

\begin{theorem}\label{thm:meanLog}
	Let $p_0, p \in \S^2$ and $a \in (0, \pi)$. Then, $\sigma(B(p_0,a)) = 4 \pi \sin^2 \frac{a}{2}$ and
	
	(i) If $p \notin B(p_0, a)$
	
	\begin{equation}
		\frac{1}{\sigma(B(p_0,a))} \int_{q \in B(p_0,a)} \log \| p - q\|dq = \log \|p - p_0\| - \frac{1}{2} - \cot^2 \frac{a}{2} \; \log \cos\frac{a}{2}.
	\end{equation} 
	
	(ii) If $p \in B(p_0, a)$
	
	\begin{align*}
		\frac{1}{\sigma(B(p_0,a))} \int_{q \in B(p_0,a)} \log \| p - q\|dq &= \log 2 - \frac{1}{2} \\
		& - \frac{\cot^2 \frac{a}{2}}{2} \log \bigg(1 - \frac{\|p-p_0\|^2}{4} \bigg) + \log \sin \frac{a}{2}.\\		
	\end{align*} 
	
\end{theorem}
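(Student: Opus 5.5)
The plan is to reduce both parts to circle averages. Take $p_0$ as the north pole and use geodesic polar coordinates $(r,\phi)$ around it. For $u(q)=\log\|p-q\|$ write
$$M(r):=\frac{1}{2\pi\sin r}\int_{S(p_0,r)}\log\|p-q\|\,d\sigma_1(q).$$
Then
$$\int_{B(p_0,a)}\log\|p-q\|\,dq=2\pi\int_0^a \sin r\, M(r)\,dr,$$
and the area formula $\sigma(B(p_0,a))=2\pi(1-\cos a)=4\pi\sin^2\frac a2$ is immediate.

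\textbf{Circle average off the pole (part (i)).} On $\S^2$ the kernel has constant Laplacian away from its pole. The computation \eqref{eq:Laplace Beltrami of Riesz on Sphere} gives $\bigtriangleup_x K_0=-1/2$, so in the sign convention of \eqref{average} we have $\triangle_q\log\|p-q\|=\tfrac12$ on $\S^2\setminus\{p\}$. When $p\notin \overline{B(p_0,r)}$, apply \eqref{average} with $\int_{B(p_0,t)}\tfrac12=\pi(1-\cos t)$ and $\sigma_1(S(p_0,t))=2\pi\sin t$. This gives
$$M(r)=\log\|p-p_0\|+\int_0^r\frac{1-\cos t}{2\sin t}\,dt=\log\|p-p_0\|+\tfrac12\int_0^r\tan\tfrac t2\,dt=\log\|p-p_0\|-\log\cos\tfrac r2.$$
Integrating against $2\pi\sin r$ and substituting $u=\cos\frac r2$ (so that $\sin r\,dr=-4u\,du$) leads to
$$\int_0^a\sin r\log\cos\tfrac r2\,dr=\int_{c}^1 4u\log u\,du=c^2-1-2c^2\log c,\qquad c=\cos\tfrac a2.$$
Dividing by $1-\cos a=2(1-c^2)$ gives the ball average $\log\|p-p_0\|-\tfrac12-\cot^2\tfrac a2\log\cos\tfrac a2$. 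This is (i).

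\textbf{Point inside the ball (part (ii)).} Let $\rho=d_R(p,p_0)<a$. For $r<\rho$ the formula for $M(r)$ above still holds. For $r>\rho$ I use a symmetry. Averaging over $q\in S(p_0,r)$ and then over rotations of $p$ about the axis through $p_0$ shows that $M(r)$ equals the double average of $\log\|x-y\|$ over $x\in S(p_0,\rho)$ and $y\in S(p_0,r)$. That double average is symmetric in $(\rho,r)$. Hence for $r>\rho$,
$$M(r)=\log\|p_0-q_r\|-\log\cos\tfrac\rho2=\log\bigl(2\sin\tfrac r2\bigr)-\log\cos\tfrac\rho2,$$
where $q_r$ is any point at geodesic distance $r$ from $p_0$.

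I then split $\int_0^a$ at $\rho$ and compute the two pieces with the same substitution $u=\cos\frac r2$ (and $v=\sin\frac r2$ for the $\log\sin$ term). The expected identities are $\|p-p_0\|=2\sin\frac\rho2$ and $1-\|p-p_0\|^2/4=\cos^2\frac\rho2$, under which the two pieces should combine into $\log2-\tfrac12-\tfrac{\cot^2(a/2)}2\log\bigl(1-\tfrac{\|p-p_0\|^2}4\bigr)+\log\sin\frac a2$.

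\textbf{Main obstacle.} The delicate step is justifying $M(r)$ for $r>\rho$, where the circle passes the singularity. The symmetry argument avoids distributional Laplacians, but it needs the double-average reformulation and continuity at $r=\rho$, where the singularity is integrable. As a consistency check, the two expressions for $M$ agree at $r=\rho$. After that, the remaining work is careful bookkeeping so that the boundary terms at $r=\rho$ cancel.
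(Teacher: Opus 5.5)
There is a sign error at the center of your computation, and it breaks both parts. Your overall plan is sound: reduce to circle averages, use \eqref{average} for $r<\rho$, and transfer to $r>\rho$ by the $(\rho,r)$-symmetry. But in the convention of \eqref{average}, where the Laplacian is $\operatorname{div}\nabla$, one has $\triangle_q\log\|p-q\|=-\tfrac12$ on $\S^2\setminus\{p\}$, not $+\tfrac12$. The paper's value $\bigtriangleup_x K_0=-\tfrac12$ refers to $\bigtriangleup=-\operatorname{div}\nabla$ applied to $K_0=-\log\|x-y\|$, and these two sign changes cancel. So for $p\notin\overline{B(p_0,r)}$ the correct circle average is $M(r)=\log\|p-p_0\|+\log\cos\tfrac r2$. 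The case $p=-p_0$ settles it: there $\|p-q\|=2\cos\tfrac r2$ on $S(p_0,r)$, whereas your formula $\log2-\log\cos\tfrac r2$ would exceed $\log2=\max_q\log\|p-q\|$. As a result, your derivation of (i) is internally inconsistent. With your $M(r)$ the ball average is $\log\|p-p_0\|-\frac{c^2-1-2c^2\log c}{2(1-c^2)}=\log\|p-p_0\|+\tfrac12+\cot^2\tfrac a2\log\cos\tfrac a2$. You reached the stated formula only because a second sign slip in the final division cancelled the first.

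In (ii) the error propagates, and the step you left as \emph{should combine} actually fails. With your signs the two pieces give $\frac{\sin^2(\rho/2)}{\sin^2(a/2)}+\cot^2\tfrac a2\log\cos\tfrac\rho2+\log\bigl(2\sin\tfrac a2\bigr)-\tfrac12$. This has a spurious first term and the wrong sign on the second. The agreement of your two branches at $r=\rho$ is no check, since it holds for either sign. The fix is $M(r)=\log\bigl(2\sin\tfrac{\max(r,\rho)}2\bigr)+\log\cos\tfrac{\min(r,\rho)}2$. Your symmetry argument correctly carries this from $r<\rho$ to $r>\rho$. It also follows in one line from $\frac1{2\pi}\int_0^{2\pi}\log(A-B\cos\alpha)\,d\alpha=\log\frac{A+\sqrt{A^2-B^2}}2$, which would have caught the sign. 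Then $\int_0^a\sin r\,M(r)\,dr=2\sin^2\tfrac a2\log\bigl(2\sin\tfrac a2\bigr)-\sin^2\tfrac a2-2\cos^2\tfrac a2\log\cos\tfrac\rho2$. Dividing by $2\sin^2\tfrac a2$ and using $1-\|p-p_0\|^2/4=\cos^2\tfrac\rho2$ gives (ii). You do not need continuity at $r=\rho$, since a single radius is a null set for this integral. For comparison, the paper gives no proof and simply quotes \cite{BelL23}. The form of \eqref{eq:GreenExp1} shows that the interior case there comes instead from the complement trick: write $B(p_0,a)$ as $\S^2$ minus $\overline{B(-p_0,\pi-a)}$, use that the full-sphere average of $\log\|p-q\|$ is $\log2-\tfrac12$, and apply (i) to $B(-p_0,\pi-a)$, which does not contain $p$. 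Once the sign is corrected, your symmetry route is an equally valid alternative.
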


\begin{theorem} \label{thm:Green_formula}
The Green function for $\S^d$ is $G(\S^d; p, q) = g(\|p-q\|)$, where
\begin{equation}
g(t) = \frac{2}{dV_d} \sum_{k=0}^{\infty} \frac{(d)_k}{(\frac{d}{2}+1)_k(k+1)}\bigg[\bigg(1 - \frac{t^2}{4}\bigg)^{k+1}  -\frac{B(\frac{d}{2},\frac{d}{2}+k+1)}{B(\frac{d}{2},\frac{d}{2})}\bigg],
\end{equation}
with $(d)_k$ denoting the Pochhammer symbol.
\end{theorem}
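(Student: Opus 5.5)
Since the kernel is invariant under rotations, I would look for $G(\S^d;p,q)$ as a zonal function. By uniqueness, $G$ is determined up to an additive constant by $\bigtriangleup G(p,\cdot)=\delta_p-V^{-1}\vol$, since any two solutions differ by a harmonic function on a compact connected manifold, hence by a constant. So it suffices to do three things: produce a zonal solution $F$ of the ODE away from $p$, check that it carries the correct point mass at $p$, and fix the constant. I expect the constant to be fixed by $\int G(p,q)\,d\sigma(q)=0$, consistent with the remark that $I_G(\sigma)=0$.

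First, fix $p$ at the north pole and let $\theta$ be the polar angle of $q$. Using the radial part of \eqref{eq:Laplace-Beltrami on Sphere}, for $q\neq p$ the equation becomes
\begin{equation*}
\frac{1}{\sin^{d-1}\theta}\,\frac{d}{d\theta}\Big(\sin^{d-1}\theta\, F'(\theta)\Big)=\frac{1}{V_d},
\end{equation*}
where $V_d=\vol(\S^d)$. Next, change variables to $u=1-t^2/4=\cos^2(\theta/2)$, where $t=\|p-q\|$. Then $u=0$ at the antipode $-p$ and $u=1$ at $p$. Since $\sin\theta=2\sqrt{u(1-u)}$ and $du=-\tfrac12\sin\theta\,d\theta$, the radial measure $\sin^{d-1}\theta\,d\theta$ becomes a constant multiple of $u^{d/2-1}(1-u)^{d/2-1}\,du$. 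Smoothness of $F$ at $-p$ forces zero flux at $u=0$, so I would integrate the ODE once from $u=0$. This gives $u^{d/2}(1-u)^{d/2}\,F_u(u)=-c_d\,B_u(\tfrac d2,\tfrac d2)$ for an explicit constant $c_d$ proportional to $1/V_d$.

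The key step is the classical hypergeometric expansion of the incomplete beta function:
\begin{equation*}
B_u(a,b)=\frac{u^{a}(1-u)^{b}}{a}\sum_{k\ge0}\frac{(a+b)_k}{(a+1)_k}\,u^k .
\end{equation*}
With $a=b=d/2$, the factor $u^{d/2}(1-u)^{d/2}$ cancels exactly. Hence $F_u$ is a constant times $\sum_k \frac{(d)_k}{(d/2+1)_k}u^k$, and integrating termwise produces the terms $\frac{(d)_k}{(d/2+1)_k(k+1)}u^{k+1}$. I must check the overall constant $\frac{2}{dV_d}$ and the sign, which will come out positive because $G$ should increase toward $p$. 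This can be done directly from the ODE. Alternatively, compute the flux near $u=1$: as $\theta\to0$, the total flux through small spheres around $p$ tends to $-\frac{1}{V_d}\int_{\S^d}d\vol=-1$, which is exactly what $\bigtriangleup G=\delta_p$ requires. This flux normalization, with the sign convention $\bigtriangleup=-\operatorname{div}\nabla$, is the step where errors are most likely, and I would verify it carefully.

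Finally, I would handle convergence and the constant. Since $\frac{(d)_k}{(d/2+1)_k}\sim C k^{d/2-1}$, the series converges for $0\le u<1$, i.e. for $t>0$, and diverges at $u=1$ at the rate of the expected singularity $t^{2-d}$ (or $\log t$ when $d=2$). Under $\sigma$, $u=(1+\langle p,q\rangle)/2$ has the $\mathrm{Beta}(\tfrac d2,\tfrac d2)$ distribution. Therefore
\begin{equation*}
\int u^{k+1}\,d\sigma=\frac{B(\tfrac d2,\tfrac d2+k+1)}{B(\tfrac d2,\tfrac d2)} .
\end{equation*}
Subtracting these means term by term, which is justified by monotone convergence since all terms are nonnegative, gives $\int G\,d\sigma=0$ and yields exactly the stated formula.
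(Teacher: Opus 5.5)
The paper gives no proof of this statement. Appendix B only summarizes results of Beltr\'an--Lizarte \cite{BelL23}, so this theorem is quoted from them and there is no internal argument to compare with. Your derivation is a correct, self-contained proof.

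The ingredients all check out. With $u=\cos^2(\theta/2)=1-t^2/4$, integrating the zonal ODE from the antipode with zero flux gives
\[
u^{d/2}(1-u)^{d/2}\,F_u=\frac{1}{V_d}\,B_u\big(\tfrac d2,\tfrac d2\big),
\]
since both sides pick up the same factor $2^{d-1}$. So your constant is $c_d=1/V_d$ with a plus sign, not a minus sign. Combined with $B_x(a,b)=\frac{x^a(1-x)^b}{a}\,{}_2F_1(a+b,1;a+1;x)$, this yields exactly the prefactor $\frac{2}{dV_d}$.

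The remaining steps also hold:
\begin{itemize}
\item The flux through $S(p,\theta)$ tends to $-\frac{V_{d-1}}{V_d}\int_0^\pi\sin^{d-1}\phi\,d\phi=-1$, as required by $\bigtriangleup=-\operatorname{div}\nabla$.
\item $F$ is a power series in the smooth function $u$ with radius of convergence $1$. It is therefore smooth on $\S^d\setminus\{p\}$, including at $-p$, which justifies the zero-flux choice after the fact.
\item Under $\sigma$, $u$ is $\mathrm{Beta}(\frac d2,\frac d2)$-distributed.
\item The subtracted series converges, since $\frac{(d)_k}{(\frac d2+1)_k(k+1)}\asymp k^{d/2-2}$ while the $\sigma$-mean of $u^{k+1}$ is $\asymp k^{-d/2}$, so the terms are of order $k^{-2}$.
\end{itemize}

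As a consistency check, for $d=2$ your formula reduces to $-\frac{1}{2\pi}\log\|p-q\|-\frac{1}{4\pi}+\frac{\log 2}{2\pi}$, which is the expression the paper states for $\S^2$.

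The citation keeps the paper short. Your argument additionally makes explicit the zero-mean normalization $\int G(p,\cdot)\,d\sigma=0$, which the paper uses tacitly (for instance in asserting $I_G(\sigma)=0$). It also explains where the beta ratios in the formula come from.
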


  In the mean value formula for the Green energy on $S^d$ the role of the term ``$-1/2-\cot^2(a/2) \log \cos(a/2)$'' plays the following expression:
 $$
K(a)=K(\mathbb{S}^{d},a):= \frac{2}{dV_d} \sum_{k=0}^{\infty} \frac{(d)_k}{(\frac{d}{2}+1)_k(k+1)} \frac{B_{\sin^2 \frac{a}{2}}(\frac{d}{2}+k+1,\frac{d}{2})}{B_{\sin^2 \frac{a}{2}}(\frac{d}{2},\frac{d}{2})},\; \;\;\; a\in(0,\pi),
$$
where the constant $V_d$ is given by
$$
V_d:=\textnormal{Vol}(\mathbb{S}^d) = \frac{2\pi^{\frac{d+1}{2}}}{\Gamma(\frac{d+1}{2})},
$$
and 
\begin{equation}  \label{eqt:incomplete beta}
B_{x}(\alpha, \beta) = \int_{0}^{x} t^{\alpha-1} (1-t)^{\beta-1} dt = \frac{x^{\alpha}}{\alpha} {}_{2}F_{1}(\alpha, 1-\beta; \alpha+1; x)
\end{equation}
is the incomplete beta function.

Note that, from Theorem \ref{thm:Green_formula}, 
\begin{equation} \label{eq:Green_at_opp_pts}
G(\S^d; p, -p) = g(2) = - K(\S^d,\pi), \quad \forall p \in \S^d.
\end{equation}

The following lemma presents some basic properties of $K(\S^d, a)$.

\begin{lemma}\label{lem:K(a)Lemma}
	We have $K(\S^2, a) = -1/(2\pi) (-1/2 - \cot^2(a/2) \log \cos(a/2))$ for $a \in (0, \pi)$. Moreover,
	\begin{equation} \label{eqt:K1}
		K(a) = 	\frac{a^2}{(2d+4)V_d} + o(a^2)
	\end{equation}
	\begin{equation} \label{eqt:K2}
		K(\pi - a) = K(\pi) - \frac{a^2}{(2d-4)V_d} + o(a^2)
	\end{equation}
\end{lemma}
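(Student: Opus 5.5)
We prove the three assertions in turn, working directly from the series defining $K(a)$ and the incomplete beta function \eqref{eqt:incomplete beta}. Throughout we write $x=\sin^2\frac a2$ and
$$c_k:=\frac{2}{dV_d}\,\frac{(d)_k}{(\frac d2+1)_k(k+1)} .$$

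\textbf{The case $d=2$.} Here $V_2=4\pi$, so $\frac{2}{dV_d}=\frac1{4\pi}$, and $(2)_k=(k+1)!=(2)_k$. Moreover $B_x(1,1)=x$ and $B_x(k+2,1)=x^{k+2}/(k+2)$. Hence
$$K(\S^2,a)=\frac{1}{4\pi}\sum_{m\ge1}\frac{x^m}{m(m+1)}.$$
Splitting $\frac1{m(m+1)}=\frac1m-\frac1{m+1}$ and using $\sum_{m\ge1} x^m/m=-\log(1-x)$ gives
$$\sum_{m\ge1}\frac{x^m}{m(m+1)}=1+\frac{1-x}{x}\log(1-x).$$
With $1-x=\cos^2\frac a2$ and $\frac{1-x}{x}=\cot^2\frac a2$, this becomes
$$K(\S^2,a)=\frac1{2\pi}\Big(\frac12+\cot^2\tfrac a2\,\log\cos\tfrac a2\Big).$$
This agrees with the mean-value term in Theorem \ref{thm:meanLog} once the normalization $G=-\frac1{2\pi}\log\|p-q\|+\text{const}$ is taken into account; the overall sign should be re-checked against the stated formula.

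\textbf{Small $a$, i.e.\ \eqref{eqt:K1}.} As $x\to0$,
$$\frac{B_x(\frac d2+k+1,\frac d2)}{B_x(\frac d2,\frac d2)}=\frac{d/2}{d/2+k+1}\,x^{k+1}\,(1+O(x)),$$
uniformly in $k$. This holds because $(1-t)^{d/2-1}$ is bounded above and below on $[0,x]$, and because the ratio is at most $C x^{k+1}$ for all $k$. The coefficients $c_k$ grow only polynomially in $k$, so the terms with $k\ge1$ sum to $O(x^2)=O(a^4)$. The $k=0$ term is
$$\frac{2}{dV_d}\cdot\frac{d}{d+2}\cdot\frac{a^2}{4}\,(1+o(1))=\frac{a^2}{(2d+4)V_d}+o(a^2),$$
which is \eqref{eqt:K1}.

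\textbf{$a$ near $\pi$, i.e.\ \eqref{eqt:K2}.} Replacing $a$ by $\pi-a$, the argument becomes $y=\cos^2\frac a2$, with $1-y=\sin^2\frac a2\sim a^2/4$. Write $B_y=B-\int_y^1$ in both numerator and denominator.

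First, the denominator. It equals $B(\frac d2,\frac d2)-O((1-y)^{d/2})$. The error $O(a^d)$ is $o(a^2)$ because $d\ge3$; this is where $d\ge3$ enters, and it is consistent with the factor $2d-4$.

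Second, the numerator, which carries the main term. Summing under the integral gives
$$K(\pi)-K(\pi-a)=\frac{1}{B(\frac d2,\frac d2)}\int_y^1 t^{d/2}(1-t)^{d/2-1}F(t)\,dt+o(a^2),\qquad F(t)=\sum_k c_k t^k .$$
Since $\sum_k\frac{(d)_k}{(d/2+1)_k}t^k={}_2F_1(d,1;\frac d2+1;t)$, $F$ is, up to the factor $\frac{2}{dV_d}$, an integrated ${}_2F_1$. Standard Gauss asymptotics give
$${}_2F_1(d,1;\tfrac d2+1;t)\sim\frac{\Gamma(\frac d2+1)\Gamma(\frac d2)}{\Gamma(d)}(1-t)^{-d/2}\quad\text{as }t\to1,$$
so after integrating, $F(t)\sim C_d(1-t)^{1-d/2}$ with an explicit $C_d$. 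The integrand is therefore asymptotically constant near $t=1$, and the integral is $C_d'(1-y)(1+o(1))=C_d'\frac{a^2}{4}+o(a^2)$.

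\textbf{Main obstacle.} The main risk is getting the constant in \eqref{eqt:K2} exactly right. The algebra must combine $\frac{2}{dV_d}$, $\Gamma(\frac d2+1)\Gamma(\frac d2)/\Gamma(d)$, the factor $(\frac d2-1)^{-1}$ produced by integrating $(1-t)^{-d/2}$, and $B(\frac d2,\frac d2)=\Gamma(\frac d2)^2/\Gamma(d)$, and the result should reduce to $\frac{1}{(2d-4)V_d}$; note that the factor $\frac d2-1$ is where $2d-4$ originates.

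We will cross-check this constant with a heuristic. Near $-p$ the Green function satisfies $\Delta G=-1/V_d$, and the ratio factor $B_{\cos^2}/B_{\sin^2}$ in the mean-value formula turns the average over the large ball $B(p,\pi-a)$ into an average over the complementary small cap. This links the coefficients of \eqref{eqt:K1} and \eqref{eqt:K2}, and it is a useful consistency test: the heuristic predicts $(2d+4)$ while the statement has $(2d-4)$. If the two constants disagree, we will recheck the sign conventions in the stated formula before trusting either.
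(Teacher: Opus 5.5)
Your argument is correct, and it takes a different route from the paper. The paper gives no proof of this lemma; it quotes the three facts from Beltr\'an--Lizarte \cite{BelL23}. Your direct computation from the series defining $K(a)$ gives a self-contained check of the constants. Both places where you hedge resolve in favour of the statement.

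For $d=2$, your closed form $\frac{1}{2\pi}\bigl(\frac12+\cot^2\frac a2\log\cos\frac a2\bigr)$ is literally the stated expression $-\frac{1}{2\pi}\bigl(-\frac12-\cot^2\frac a2\log\cos\frac a2\bigr)$, so there is no sign to recheck.

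For \eqref{eqt:K2}, the integrand $t^{d/2}(1-t)^{d/2-1}F(t)$ tends, as $t\to1$, to $L=\frac{2}{dV_d}\cdot\frac{\Gamma(\frac d2+1)\Gamma(\frac d2)}{(\frac d2-1)\Gamma(d)}$. Dividing by $B(\frac d2,\frac d2)=\Gamma(\frac d2)^2/\Gamma(d)$ gives
\[
\frac{L}{B(\frac d2,\frac d2)}\,\sin^2\frac a2=\frac{2}{dV_d}\cdot\frac{d/2}{d/2-1}\cdot\frac{a^2}{4}+o(a^2)=\frac{a^2}{(2d-4)V_d}+o(a^2),
\]
which is exactly the claimed constant. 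The denominator correction is $O(a^d)=o(a^2)$ for $d\ge 3$, as you say.

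You should discard the consistency heuristic, however. It is wrong, and it is the only source of your doubt about $2d-4$.

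The term $K(\pi-a)$ arises from \eqref{eq:GreenExp} only for evaluation points $z\notin B(p_0,\pi-a)$, that is, for $z$ in the complementary cap $B(-p_0,a)$. So the function $G(z,\cdot)$ whose average you want to transfer to that cap has its pole inside the cap. It does not have constant Laplacian there, and the pole contributes at order $a^2$. The coefficient therefore cannot be read off from \eqref{eqt:K1}.

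A correct sanity check takes $z=-p_0$ and uses $\int_{\mathbb{S}^d}G(z,q)\,dq=0$ together with $G(-p_0,p_0)=-K(\pi)$ from \eqref{eq:Green_at_opp_pts}:
\[
K(\pi-a)-K(\pi)=-\frac{1}{\sigma(B(p_0,\pi-a))}\int_{B(-p_0,a)}G(-p_0,q)\,dq\approx-\frac{1}{V_d}\int_{|u|<a}\frac{|u|^{2-d}}{(d-2)V_{d-1}}\,du=-\frac{a^2}{2(d-2)V_d}.
\]
Only the Newtonian singularity of $G$ survives at order $a^2$, and it reproduces the factor $2d-4$.
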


\begin{theorem}($\S^d$-analog of Theorem \ref{thm:meanLog})
	Let $p_0, p \in \S^d$ and $a \in (0, \pi)$. Then, $\sigma(B(p_0,a)) = 2^{d-1} V_{d-1} B_{\sin^2 \frac{a}{2}}(\frac{d}{2}, \frac{d}{2})$ and moreover 
	
	(i) If $p \notin B(p_0, a)$:
	
	\begin{equation} \label{eq:GreenExp}
		\frac{1}{\sigma(B(p_0,a))} \int_{q \in B(p_0,a)} G(\S^d; p, q) dq = G(\S^d; p, p_0) + K(\S^d,a). 
	\end{equation}
	
	(ii) If $p \in B(p_0, a)$:
	
	\begin{equation} \label{eq:GreenExp1}
    \begin{split}
		\frac{1}{\sigma(B(p_0,a))} \int_{q \in B(p_0,a)} G(\S^d; p, q) dq &=  - \frac{B_{\cos^2 \frac{a}{2}}(\frac{d}{2}, \frac{d}{2})}{B_{\sin^2 \frac{a}{2}}(\frac{d}{2}, \frac{d}{2})} \\
		& \times (G(\S^d; p , -p_0) + K(\S^d, \pi-a)).
    \end{split}
	\end{equation}
	
\end{theorem}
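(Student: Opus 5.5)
The plan is to exploit that $G(\S^d;p,q)=g(\|p-q\|)$ is zonal and satisfies $\bigtriangleup_q G(p,\cdot)=\delta_p-V_d^{-1}$. Away from $p$, with the paper's sign convention, this says $\operatorname{div}\nabla_q G(p,q)=V_d^{-1}$, a \emph{constant}.

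\textbf{Volume formula.} Write $x=\sin^2\frac{\theta}{2}$, where $\theta$ is the geodesic distance from $p_0$. The surface element becomes $V_{d-1}\sin^{d-1}\theta\,d\theta=2^{d-1}V_{d-1}x^{\frac d2-1}(1-x)^{\frac d2-1}dx$. Integrating from $0$ to $\sin^2\frac a2$ gives $\sigma(B(p_0,a))=2^{d-1}V_{d-1}B_{\sin^2\frac a2}(\frac d2,\frac d2)$ directly.

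\textbf{Part (i).} Fix $p\notin B(p_0,a)$, so $u=G(p,\cdot)$ is smooth on a neighbourhood of the closed ball (the boundary case follows by continuity). I would apply the spherical mean-value identity used in \eqref{average} (Besse's formula), with $\operatorname{div}\nabla u$ in place of the Laplacian:
\[
\frac{1}{\sigma_{d-1}(S(p_0,r))}\int_{S(p_0,r)}u\,d\sigma_{d-1}=u(p_0)+\int_0^r\frac{\sigma(B(p_0,t))}{V_d\,\sigma_{d-1}(S(p_0,t))}\,dt=:u(p_0)+k(r).
\]
The key point is that $k(r)$ does not depend on $p$. Multiplying by $\sigma_{d-1}(S(p_0,r))$ and integrating in $r\in(0,a)$ gives the ball average
\[
G(p,p_0)+\widetilde K(a),\qquad \widetilde K(a)=\frac{1}{\sigma(B(p_0,a))}\int_0^a\sigma_{d-1}(S(p_0,r))\,k(r)\,dr,
\]
which is again independent of $p$.

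It remains to show $\widetilde K(a)=K(\S^d,a)$. Since $\widetilde K$ does not depend on $p$, I would evaluate at the convenient choice $p=-p_0$, which lies outside the ball for $a<\pi$. There $\widetilde K(a)$ equals the ball average of $G(-p_0,\cdot)$ minus $g(2)$.

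\begin{itemize}
\item For $q$ at angle $\theta$ from $p_0$ we have $1-\|q+p_0\|^2/4=\sin^2\frac\theta2=x$. So the series of Theorem \ref{thm:Green_formula} has terms $x^{k+1}$ minus constants.
\item Every term of $g(2)$ contains the factor $(1-1)^{k+1}=0$, so $g(2)$ is exactly minus the sum of the constant terms, i.e.\ $g(2)=-K(\S^d,\pi)$, consistent with \eqref{eq:Green_at_opp_pts}. Hence the constants cancel in the difference.
\item Integrating $x^{k+1}$ against $x^{\frac d2-1}(1-x)^{\frac d2-1}dx$ over $[0,\sin^2\frac a2]$ gives $B_{\sin^2\frac a2}(\frac d2+k+1,\frac d2)$. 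Dividing by the volume yields exactly the defining series for $K(\S^d,a)$.
\end{itemize}

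Termwise integration needs justification. I would argue it by monotone convergence, since all coefficients are positive and $0\le x\le1$, or by uniform convergence on the closed ball, which stays away from $-p_0$.

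\textbf{Part (ii).} Let $p\in B(p_0,a)$. Since $\int_{\S^d}G(p,q)\,dq=0$ and $\partial B$ is null,
\[
\int_{B(p_0,a)}G(p,q)\,dq=-\int_{B(-p_0,\pi-a)}G(p,q)\,dq.
\]
Moreover $d_R(p,-p_0)=\pi-d_R(p,p_0)>\pi-a$, so $p\notin B(-p_0,\pi-a)$ and part (i) applies to that ball. The volume formula gives $\sigma(B(-p_0,\pi-a))\propto B_{\cos^2\frac a2}(\frac d2,\frac d2)$, using $\sin^2\frac{\pi-a}{2}=\cos^2\frac a2$. Dividing by $\sigma(B(p_0,a))$ gives \eqref{eq:GreenExp1}.

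\textbf{Main obstacle.} I expect the hard part to be the rigorous spherical mean-value identity with the correct sign conventions, together with the exact identification of the constant with the hypergeometric-type series for $K$. The remaining steps are bookkeeping.
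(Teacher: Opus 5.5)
The paper gives no proof of this statement. Appendix~B only quotes it from \cite{BelL23}. Your argument is correct, and it supplies a self-contained proof that relies only on Theorem~\ref{thm:Green_formula} and on the mean-value identity \eqref{average}, which the paper already uses in Lemma~\ref{lemma:meanv_subh}.

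What your route makes visible is that part (i) follows from one fact: $\operatorname{div}\nabla_q G(p,q)\equiv V_d^{-1}$ away from the pole. That alone forces the ball average minus the centre value to depend on $a$ only. The explicit series is then needed just once, to calibrate this function at $p=-p_0$. There $1-\|q+p_0\|^2/4=\sin^2\frac{\theta}{2}$ is exactly the cap variable, so the incomplete beta functions in $K(\S^d,a)$ come out of direct termwise integration. This step is justified by Tonelli, because $g(\|q+p_0\|)-g(2)=\frac{2}{dV_d}\sum_k c_k x^{k+1}$ with $c_k=\frac{(d)_k}{(\frac d2+1)_k(k+1)}\ge 0$. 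Part (ii) by complementation is precisely what the prefactor $-B_{\cos^2\frac a2}(\frac d2,\frac d2)/B_{\sin^2\frac a2}(\frac d2,\frac d2)$ encodes.

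A further payoff of your route is the integral representation
$K(\S^d,a)=\frac{1}{V_d\,\sigma(B(p_0,a))}\int_0^a\sigma_{d-1}(S(p_0,r))\int_0^r\frac{\sigma(B(p_0,t))}{\sigma_{d-1}(S(p_0,t))}\,dt\,dr$.
From it, positivity of $K$ and the asymptotic \eqref{eqt:K1} follow at once, neither of which is visible from the series.

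The one ingredient you use without comment is the normalization $\int_{\S^d}G(p,q)\,dq=0$, on which part (ii) rests. It deserves a line showing it is consistent with Theorem~\ref{thm:Green_formula}. Indeed, $\int_{\S^d}\bigl(1-\tfrac{\|p-q\|^2}{4}\bigr)^{k+1}dq=2^{d-1}V_{d-1}B(\tfrac d2,\tfrac d2+k+1)$. This is exactly $V_d$ times the constant subtracted in the $k$-th bracket, because $V_d=2^{d-1}V_{d-1}B(\frac d2,\frac d2)$. Hence every bracket integrates to zero, again by Tonelli.
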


\subsection{Upper Bound on Minimal Green Energy}\label{sec:Upper bound on Green energy}

\begin{lemma}\label{lem:equal area partition}
  For each $d \in \mathbb{N}$, there is a positive constant $c_d$ such that for all $N \in \mathbb{N}$, there is a partition of $\mathbb{S}^d$ into $N$ regions, with each region having measure $\frac{1}{N}$ and diameter at most $c_d N^{-1/d}$.
\end{lemma}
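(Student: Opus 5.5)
We need to prove the equal-area partition lemma: for every $N$ there is a partition of $\mathbb{S}^d$ into $N$ regions, each of measure $1/N$ and diameter at most $c_dN^{-1/d}$. We proceed by induction on $d$, using a zonal "recursive zonal equal area" construction in the spirit of Leopardi, or equivalently Feige–Schechtman.

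\textbf{Base case.} For $d=1$, take $N$ equal arcs. Each has measure $1/N$ and diameter at most $2\pi/N$.

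\textbf{Inductive step.} Assume the lemma holds for $\mathbb{S}^{d-1}$, and let $N$ be large; small $N$ are absorbed into $c_d$, since every diameter is at most $2$.

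1. Take two polar caps of measure $1/N$ each, centred at the north and south poles. Their geodesic radius $\theta_c$ satisfies $\sigma(B(p,\theta_c))=1/N$, so $\theta_c\asymp N^{-1/d}$, and each cap has diameter $O(N^{-1/d})$.

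2. Divide the remaining polar range $[\theta_c,\pi-\theta_c]$ into collars. Choose the collar boundaries $\theta_c=\tau_0<\tau_1<\dots<\tau_n=\pi-\theta_c$ as follows.
   \begin{itemize}
   \item Set the ideal collar height $h\asymp N^{-1/d}$ to be the side of a cell of measure $1/N$.
   \item The number of collars is then $n\approx (\pi-2\theta_c)/h$.
   \item Adjust the boundaries so that each collar $\{\tau_{i-1}\le\theta<\tau_i\}$ has measure exactly $m_i/N$ for an integer $m_i\ge 1$.
   \end{itemize}
   To achieve this, round cumulative ideal counts to integers and solve for $\tau_i$ by continuity and monotonicity of the cap measure $\theta\mapsto\sigma(B(p,\theta))$. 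This keeps $|\tau_i-\tau_{i-1}|\le C h$. The rounding error is at most one cell per collar, which changes the heights only by $O(h)$, because the collar measure per unit height is comparable to $\sin^{d-1}\theta$. Near the poles, where that measure is small, compare with the cap computation.

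3. Split collar $i$ into $m_i$ cells of the form $\{x:\theta\in[\tau_{i-1},\tau_i),\ \xi\in R\}$. Here $\xi\in\mathbb{S}^{d-1}$ is the direction in the equatorial variable, and $R$ ranges over the partition of $\mathbb{S}^{d-1}$ into $m_i$ equal-measure pieces given by the inductive hypothesis. Because $\sigma$ factors as $\sin^{d-1}\theta\,d\theta\,d\sigma_{d-1}(\xi)$, each cell has measure exactly $1/N$.

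4. Bound the diameters. A cell's diameter is at most the collar height plus $\sin\tau^*\cdot\mathrm{diam}(R)$, where $\tau^*$ maximizes $\sin$ on the collar. By induction $\mathrm{diam}(R)\le c_{d-1}m_i^{-1/(d-1)}$.

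The main obstacle is showing that $m_i\gtrsim N\sin^{d-1}\tau^*\,h$, uniformly in $i$ including collars adjacent to the caps. This makes $\sin\tau^*\,m_i^{-1/(d-1)}\lesssim (N h)^{-1/(d-1)}\asymp N^{-1/d}$, since $h\asymp N^{-1/d}$ gives $Nh\asymp N^{(d-1)/d}$. Two things are needed for this:
\begin{itemize}
\item The ratio of $\sin\theta$ across a collar must be bounded. This holds because $\tau_{i-1}\ge\theta_c\gtrsim h$ and the collar height is $O(h)$, so $\max\sin/\min\sin\le C$ on each collar.
\item The rounding must not reduce $m_i$ below a constant fraction of its ideal value, which is at least of order $1$ when $\theta\approx\theta_c$. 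Here we would, if necessary, merge the first few collars near each cap into one so that every ideal count is at least $1$.
\end{itemize}
With these two facts the diameter bound gives $c_d$ depending only on $c_{d-1}$ and $d$, which completes the induction.
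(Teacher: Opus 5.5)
Your proposal is correct and follows essentially the same approach as the paper. The paper gives no proof of its own: it cites Beck and Chen, Rakhmanov--Saff--Zhou for $d=2$, and Leopardi's iterative construction on $d$ for general $d$, and your sketch is Leopardi's recursive zonal equal-area construction (polar caps of measure $1/N$, collars with cumulatively rounded integer counts $m_i$, and each collar split via the partition of $\mathbb{S}^{d-1}$ given by induction). You also correctly locate the real work in keeping $m_i$ comparable to $N h \sin^{d-1}\tau^*$, using the $O(h)$ displacement of the collar boundaries, the bounded ratio of $\sin$ across each collar, and merging the first few collars at each cap; this is what Leopardi's diameter estimate establishes in full detail.
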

Lemma \ref{lem:equal area partition} was treated in the 1987 book of Beck and Chen \cite{BeckC87} as a ``well-known result". An explicit construction for $d=2$ and all $N$ was likely first given in the 1994 paper by Rakhmanov, Saff, and Zhou \cite{RakSZ94}, and was later extended by Leopardi \cite{Leo09} for all $d \geq 2$ using an iterative construction on $d$.

Combining Proposition 3.1 and Lemma C.2 in \cite{BelL23}, we have that for any $x,y \in \mathbb{S}^d$, $d \geq 3$
\begin{equation}\label{eq:Green Function Expansion}
G(x,y) = \frac{\Gamma(\frac{d+1}{2})^2 \Gamma(\frac{d-1}{2})}{d! \pi^{\frac{d+1}{2}}} \|x-y\|^{2-d}+ \mathrm{o}(\|x-y\|^{2-d}).
\end{equation}\label{eq:asymptotics of Green kernel}


\begin{proposition}
  For $\mathbb{S}^d$, $d \geq 3$, there is some positive constant $c_G$ such that for $N \in \mathbb{N}$
  sufficiently large,
\begin{equation*}
\mathcal{E}_{G}(N)  \leq -c_G N^{2- \frac{2}{d}}
\end{equation*}
\end{proposition}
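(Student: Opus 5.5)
The plan is to bound $\mathcal{E}_G(N)$ from above by averaging the Green energy over a random configuration subordinate to an equal-area partition, in the spirit of the standard jittered-sampling argument. Fix $N$ and take the partition $\{R_1,\dots,R_N\}$ of $\mathbb{S}^d$ from Lemma \ref{lem:equal area partition}, with $\sigma(R_i)=1/N$ and $\operatorname{diam}R_i\le c_dN^{-1/d}$. Choose independent random points $X_i$ with $X_i$ distributed according to $N\sigma|_{R_i}$. Since $\mathcal{E}_G(N)\le \mathbb{E}\,E_G(\{X_1,\dots,X_N\})$, it suffices to estimate this expectation.

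By independence,
\[
\mathbb{E}\,E_G=\sum_{i\ne j}N^2\int_{R_i}\int_{R_j}G\,d\sigma\,d\sigma = N^2 I_G(\sigma)-N^2\sum_{i=1}^N\int_{R_i}\int_{R_i}G(x,y)\,d\sigma(x)\,d\sigma(y).
\]
Here $I_G(\sigma)=0$, since $\int G(x,y)\,d\sigma(y)=0$ by the normalization of the Green function (as in the Remark). So
\[
\mathbb{E}\,E_G=-N^2\sum_i\iint_{R_i\times R_i}G .
\]
It remains to show that the diagonal self-interactions are of order $N^{-1-2/d}$ each, from below.

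By \eqref{eq:Green Function Expansion}, $G(x,y)\ge c\|x-y\|^{2-d}-C_0$ for all $x,y$. The $o(\|x-y\|^{2-d})$ term is dominated for small distances, and $G$ is bounded on the set where $\|x-y\|$ is bounded below. Since $\operatorname{diam}R_i\le c_dN^{-1/d}$, for $x,y\in R_i$ we have $\|x-y\|^{2-d}\ge c_d^{2-d}N^{(d-2)/d}$. Therefore, for $N$ large,
\[
\iint_{R_i\times R_i}G\ge N^{-2}\bigl(c\,c_d^{2-d}N^{1-2/d}-C_0\bigr)\ge \tfrac{c}{2}c_d^{2-d}N^{-1-2/d}.
\]
Summing over the $N$ cells and multiplying by $-N^2$ gives
\[
\mathbb{E}\,E_G\le -\tfrac{c}{2}c_d^{2-d}N^{2-2/d},
\]
which is the claim with $c_G=\tfrac{c}{2}c_d^{2-d}$.

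The only delicate point is the uniform lower bound on $G$ used above. I would handle it by splitting into the cases $\|x-y\|\le\rho$ and $\|x-y\|>\rho$. For a fixed small $\rho$, the first case is covered by \eqref{eq:Green Function Expansion}, which gives $G\ge\frac c2\|x-y\|^{2-d}$ there. In the second case $G$ is continuous on a compact set and hence bounded below. For large $N$ we have $c_dN^{-1/d}<\rho$, so only the first case matters inside a cell; the constant $C_0$ is then not even needed. The remaining steps are routine.
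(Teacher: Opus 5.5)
Your proposal is correct and follows essentially the same route as the paper. Both arguments average the Green energy over independent points drawn from the normalized restrictions of $\sigma$ to an equal-area partition with cells of diameter at most $c_d N^{-1/d}$, use $I_G(\sigma)=0$, and bound each diagonal self-interaction $\iint_{R_i\times R_i} G$ from below via the near-diagonal asymptotics $G(x,y)\sim C\|x-y\|^{2-d}$. Your handling of the $o(\|x-y\|^{2-d})$ term is in fact slightly more careful than the paper's bookkeeping: you fix $\rho$ first, then take $N$ large enough that every cell has diameter below $\rho$.
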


\begin{proof}
   By Lemma \ref{lem:equal area partition}, there exists some positive constant $c_{d}$ such that for $N$
  sufficiently large, there is a partition of $\mathbb{S}^d$ into $N$ regions, $D_1$,
  \ldots, $D_N$, each of measure $\frac{1}{N}$ and with diameter at most $c_d N^{-1/d}$.

  Letting $ d \sigma_j(x) := N \mathbf{1}_{D_j} d \sigma(x)$, we have, from \eqref{eq:Green Function Expansion},
\begin{align*}
\mathcal{E}_{G}(N) &  \leq \int_{\Omega} \cdots \int_{\Omega} \sum_{i \neq j} G( z_i, z_j)\,d \sigma_1(z_1) \cdots d \sigma_N(z_N) \\
& = N^2 \sum_{i \neq j} ~\iint\limits_{D_i\times D_j} G( z_i, z_j) \,d \sigma(z_i)\, d \sigma(z_j) \\
& = N^2 \Bigg(~\iint\limits_{\mathbb{S}^d\times\mathbb{S}^d} G (x,y) \,d \sigma(x)\, d \sigma(y) - \sum_{j=1}^{N} ~\iint\limits_{D_j\times D_j} G( x, y) \,d \sigma_j(x)\,
d \sigma_j(y)\Bigg)\\
& \leq N^2 I_{G}(\sigma) - \sum_{j=1}^{N} \left( \frac{\Gamma(\frac{d+1}{2})^2 \Gamma(\frac{d-1}{2})}{d! \pi^{\frac{d+1}{2}}} \operatorname{diam}(D_j)^{2-d}  + \mathrm{o}\Big(\operatorname{diam}(D_j)^{2-d} \Big) \right)\\
& \leq  - \frac{ \Gamma(\frac{d+1}{2})^2 \Gamma(\frac{d-1}{2})}{d! \pi^{\frac{d+1}{2}}} \frac{1}{(c_d)^{d-2}} N^{2- \frac{2}{d}} + \mathrm{o}( N^{2-\frac{2}{d}}).
\end{align*}

\end{proof}

\bibliographystyle{alpha}

\end{document}